\numberwithin{equation}{section}
\definecolor{crimson}{rgb}{0.86, 0.08, 0.24}
\definecolor{darkcyan}{rgb}{0.0, 0.55, 0.55}
\newtheorem{thm}{Theorem}[section]
\newtheorem*{thm*}{Theorem}
\newtheorem*{cor*}{Corollary}
\newtheorem*{prop*}{Proposition}
\newtheorem*{notation*}{Notation}
\newtheorem{example}[thm]{Example}
\newtheorem{defn}[thm]{Definition}
\newtheorem*{defn*}{Definition}
\newtheorem{prop}[thm]{Proposition}
\newtheorem{lem}[thm]{Lemma}
\newtheorem{rem}[thm]{Remark}
\newtheorem*{conj*}{Conjecture}
\newtheorem*{quest*}{Question}
\newtheorem{quest}[thm]{Question}
\newtheorem{thmx}{Theorem}
\newcommand{\dtone}{\bar{\mathfrak{d}}_{\bar{t}_1}}
\newcommand{\bt}{\bar{t}}
\newcommand{\tone}{\bar{t}_1}
\newcommand{\gtone}{\gamma\bar{t}_1}
\newcommand{\bv}{\bar{v}}
\newcommand{\BPG}{BP^{(\!(G)\!)}}
\newcommand{\BPR}{BP_\mathbb{R}}
\newcommand{\BPCn}{BP^{(\!(C_{2^n})\!)}}
\newcommand{\BPCi}{BP^{(\!(C_{2^i})\!)}}
\newcommand{\MUG}{MU^{(\!(G)\!)}}
\newcommand{\Sh}{\mathbb{S}_h}
\newcommand{\Gh}{\mathbb{G}_h}
\newcommand{\SliceSS}{\mathrm{SliceSS}}
\newcommand{\HFPSS}{\mathrm{HFPSS}}
\newcommand{\Z}{\underline{\mathbb{Z}}}
\newcommand{\BPCfour}{BP^{(\!(C_{4})\!)}}
\newcommand{\BPCfourone}{BP^{(\!(C_{4})\!)}\langle 1 \rangle}
\newcommand{\Cn}{C_{2^n}}
\newcommand{\Ci}{C_{2^i}}
\newcommand{\Ck}{C_{2^k}}
\newcommand{\Ckplusone}{C_{2^{k+1}}}
\newcommand{\Cnminusone}{C_{2^{n-1}}}
\newcommand{\WW}{\mathbb{W}}
\newcommand{\RN}{^R{N}_H^G}
\newcommand{\La}{\mathbb{L}}
\newcommand{\Ind}{\mathrm{Ind}}
\newcommand{\res}{\mathrm{res}}
\newcommand{\tr}{\mathrm{tr}}
\newcommand{\Res}{\mathrm{Res}}
\newcommand{\TateSS}{\mathrm{TateSS}}
\newcommand{\ind}{\mathrm{Ind}}
\newcommand{\SpG}{\mathrm{Sp}^G}
\newcommand{\SpH}{\mathrm{Sp}^H}
\newcommand{\SpFilH}{\mathrm{Sp}^{H,\mathrm{Fil}}}
\newcommand{\SpFilG}{\mathrm{Sp}^{G,\mathrm{Fil}}}
\newcommand{\Aut}{\mathrm{Aut}}
\newcommand{\Gal}{\mathrm{Gal}}
\newcommand{\Pic}{\mathrm{Pic}}
\newcommand{\colimit}{\mathrm{colim}}
\newcommand{\sk}{\mathrm{sk}}
\newcommand*{\rom}[1]{\expandafter\@slowromancap\romannumeral #1@}
\author[Duan]{Zhipeng Duan}\address{School of Mathematical Sciences, Nanjing Normal University}\email{zhipeng@njnu.edu.cn}
\author[Hill]{Michael A. Hill}\address{Department of mathematics, University of Minnesota} \email{mahill@umn.edu}
\author[Li]{Guchuan Li}\address{School of Mathematical Sciences, Peking University}\email{liguchuan@math.pku.edu.cn}
\author[Liu]{Yutao Liu}\address{Department of Mathematics, University of Washington} \email{yutao492@uw.edu}
\author[Shi]{XiaoLin Danny Shi}\address{Department of Mathematics, University of Washington}\email{dannyshi@uw.edu}
\author[Wang]{Guozhen Wang} \address{Shanghai Center for Mathematical Sciences, Fudan University} \email{wangguozhen@fudan.edu.cn}
\author[Xu]{Zhouli Xu} \address{Department of Mathematics, UCLA} \email{xuzhouli@ucla.edu}
\title{Periodicity and finite complexity in higher real $K$-theories}
\begin{document}
\begin{abstract}
In this paper, we establish periodicity results for higher real $K$-theories at all heights and for all finite subgroups of the Morava stabilizer group at the prime 2. We further analyze the $RO(G)$-periodicity lattice of the height-$h$ Lubin--Tate theory, proving new $RO(G)$-graded periodicities and explicit finiteness results for the $RO(G)$-graded homotopy groups of $E_h$. Together, these results provide a foundation for both the structural and computational study of higher real $K$-theories.

\end{abstract}
\maketitle

\setcounter{tocdepth}{1}
\tableofcontents

\section{Introduction}

\subsection{Motivation and main results}

Historically, topological $K$-theories, together with their Bott periodicities and Adams operations, have resolved several fundamental problems in algebraic and geometric topology, including the Hopf invariant one problem \cite{Adams60, AdamsAtiyah66} and the vector fields on spheres problem \cite{Adams62}. Adams also used real $K$-theory to analyze the image of the $J$-homomorphism $J: \pi_k SO \to \pi_k S^0$, showing that the 8-fold Bott periodicity in real $K$-theory induces corresponding periodic families in the stable homotopy groups of spheres \cite{Adams66ImgJ} (see also \cite{MahowaldImageJ}). This stands as one of the earliest influential applications of periodic generalized cohomology theories to the study of the stable homotopy groups of spheres.

$K$-theories are part of a broader family of generalized cohomology theories central to chromatic homotopy theory. The moduli stack of formal groups is stratified by height, and Quillen’s foundational work \cite{Quillen69}, together with subsequent developments \cite{MRW77,DHS88,RavenelOrange,HS98,COCTALOS,RavenelGreen,LurieChromatic}, identifies this stratification with localizations with respect to the Lubin–Tate theories. These theories generalize complex $K$-theory $KU$ and carry symmetries that are higher-height analogues of the Adams operations on $KU$.

Let $E(k, \Gamma)$ be the Lubin--Tate theory associated with a height-$h$ formal group $\Gamma$ over a perfect field $k$ of characteristic $p$.  We will often suppress $(k, \Gamma)$ in the notation and write $E_h = E(k, \Gamma)$.  Let $\mathbb{S}_h = \Aut_k(\Gamma)$ be the small Morava stabilizer group, and let $\mathbb{G}_h = \mathbb{S}_h \rtimes \Gal(k/\mathbb{F}_p)$ be the big Morava stabilizer group.  By the Goerss--Hopkins--Miller Theorem \cite{Rez98,GH04,Lur18}, the continuous action of $\mathbb{G}_h$ on $\pi_* E_h$ refines uniquely to an $\mathbb{E}_\infty$-action of $\mathbb{G}_h$ on $E_h$. For example, at height $h = 1$, we have $E_1 = KU^{\wedge}_p$ (the $p$-completed complex $K$-theory), $\mathbb{G}_1 = \mathbb{Z}_p^\times$, and the action of $\mathbb{Z}_p^\times$ on $KU^{\wedge}_p$ recovers the Adams operations. 


\begin{notation*} \rm
For a finite subgroup $G \subset \mathbb{G}_h$, set 
\[EO_h(G) := E_h^{hG} = F(EG_+, E_h)^G,\]
the homotopy fixed points of $E_h$ under the action of $G$. 
\end{notation*}
The theories $EO_h(G)$ are called the \textit{higher real $K$-theories}, named because when $p = 2$, $h = 1$, and $G = C_2$, one recovers the 2-completed real $K$-theory $KO^{\wedge}_2$ as $EO_1(C_2)$.  As the height $h$ and group $G$ increase, the theories $EO_h(G)$ encode increasingly refined information about the stable homotopy category, but their analysis becomes correspondingly more intricate.

By the Nilpotence Theorem of Devinatz, Hopkins, and Smith \cite{DHS88, HS98}, each $EO_h(G)$ is a periodic cohomology theory, with \textit{some} finite period. However, beyond low heights where explicit computations of $\pi_*EO_h(G)$ are available, the periods for general $h$ and $G$ remain unknown.

\vspace{0.05in}
\begin{quest}\rm \label[quest]{quest:Periodicity}
What is the periodicity of $EO_h(G)$?
\end{quest}
\vspace{0.05in}

Establishing periodicity results for $EO_h(G)$ is of central importance for several reasons. Although computations involving $EO_h(G)$ are often intricate, the instances in which its periodicities are known have been instrumental in resolving major problems in algebraic and differential topology.  Examples include the following:

\begin{enumerate}

    \item The resolution of the Kervaire invariant one problem by Hill, Hopkins, and Ravenel \cite{HHR16} relies on the periodicity of $EO_4(C_8)$. The theory $EO_4(C_8)$ detects the potential Kervaire invariant elements, and its $256$-periodicity forces these elements to vanish in $\pi_* S^0$ (see also \cite{Ravenel78, LSWX2019}).
    
    \item At the prime 2, $\textup{TMF}$, whose $K(2)$-localization is the higher real $K$-theory $EO_2(G_{48})$, is 192-periodic. This periodicity governs both its structural features and geometric applications. In particular, Behrens, Mahowald, and Quigley \cite{BMQ20} determine the Hurewicz image of the unit map $\pi_* S^0 \to \pi_* \textup{tmf}$, producing 192-periodic families in $\pi_* S^0$. Through the Kervaire–Milnor classification of homotopy spheres \cite{KervaireMilnor63}, these families, together with the work of Behrens, Hill, Hopkins, and Mahowald \cite{BHHM20}, establish the existence of exotic spheres in congruence classes of dimensions modulo 192, accounting for more than half of the even-dimensional cases.

\item The theories $EO_h(G)$ serve as fundamental building blocks for the $K(h)$-local sphere via the theory of finite resolutions. The theory of finite resolutions was developed
by Goerss--Henn--Mahowald--Rezk \cite{GHMR05}, followed by the works of Beaudry, Behrens, Bobkova, Goerss, and Henn \cite{Behrens2006, Henn2007, Beaudry2015, BG18, Henn2019, BGH2022}.
    
    \item In addition to the geometric applications of $K$-theories discussed at the beginning of this section, $KO$ plays a central role in the study of spin manifolds, as established by Atiyah, Bott, and Shapiro \cite{ABS1964}. This leads directly to the $KO$-valued index of the Dirac operator via the Atiyah–Singer Index Theorem \cite{AtiyahSingerI}.  In this way, $KO$ connects geometric questions about curvature and symmetry to stable homotopy theory. In addition, $KO$ also plays a significant role in studying the Geography Problem for 4-manifolds through the analysis of $\textup{Pin(2)}$-equivariant Mahowald invariants \cite{Furuta01, HLSX2022}.
    
\end{enumerate}

In view of its significance, resolving \cref{quest:Periodicity} is not only a goal in its own right, but also a central step toward addressing every question concerning $EO_h(G)$, ranging from coefficients and detection theorems to duality and geometry.

\vspace{0.07in}

\textit{In this paper, we will establish periodicity results for $EO_h(G)$ at all heights $h \geq 1$ and all finite subgroups $G \subset \mathbb{G}_h$ at the prime 2, answering \cref{quest:Periodicity} when $p = 2$.}  

\vspace{0.07in}

To state our main result, we will first recall the classification of the finite subgroups of $\mathbb{G}_h$. The finite subgroups of $\mathbb{S}_h$ and $\mathbb{G}_h$ have been completely classified at all heights $h$ \cite{Hew95, Hew99, Buj2012}.  To summarize this classification at the prime 2, let $h = 2^{n-1}m$, where $m$ is an odd number.  If $n\neq 2$, the maximal finite $2$-subgroups of $\mathbb{S}_h$ are isomorphic to $C_{2^n}$, the cyclic group of order $2^n$.  When $n=2$, the maximal finite $2$-subgroups of $\mathbb{S}_h$ are isomorphic to the quaternion group $Q_8$.  As a result, for any $G \subset \mathbb{G}_h$ a finite subgroup, a 2-Sylow subgroup $H$ of $G \cap \mathbb{S}_h$ is isomorphic to either $C_{2^n}$ or $Q_8$.


\begin{defn} \rm \label[defn]{defn:PhG}
For $h \geq 1$ and For $G \subset \mathbb{G}_h$ a finite subgroup, let $H$ be a 2-Sylow subgroup of $G_0 = G \cap \mathbb{S}_h$.  Define
\[P(h, H) := \left \{\begin{array}{ll} 2 & \text{if } H = e \\ 2^{h+n+1} & \text{if } H = \Cn \\
2^{h+4} & \text{if } H = Q_8\end{array} \right. \]
and define 
\[P(h, G) := \frac{|G_0|}{|H|} \cdot P(h, H).\] 
\end{defn}

\begin{thmx}[Periodicity]\label{thm:A} \hfill \\
The cohomology theory $EO = EO_h(G)$ is $P$-fold periodic, where $P = P(h, G)$: for all $X$, 
\[EO^{*+P}(X) \cong EO^*(X).\]
\end{thmx}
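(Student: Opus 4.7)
The plan is to establish the periodicity by constructing an explicit invertible class in $\pi_{P(h,H)}^H E_h$ at the level of the $2$-Sylow subgroup $H$ of $G_0 := G \cap \mathbb{S}_h$, and then transporting it upward to $G$. The baseline case $H = e$ is immediate: $E_h$ is complex orientable, hence $2$-periodic via a unit $u \in \pi_2 E_h$; the multiplicative norm $N_e^{G_0}(u) \in \pi_{2|G_0|}^{G_0} E_h$ is then a unit of the correct degree $P(h,G_0) = 2|G_0|$, which is $G/G_0$-invariant and descends to a unit in $\pi^G_{P(h,G)} E_h$. The work is therefore concentrated in the nontrivial $2$-primary cases $H = \Cn$ or $H = Q_8$.

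For the main case $H = \Cn$ (with $H = Q_8$ handled analogously via the index-two cyclic subgroup $C_4 \subset Q_8$), we present $E_h$ as a completed module over an equivariant real bordism spectrum such as $\BPCn$, so that $\pi_\star^{\Cn} E_h$ becomes accessible via the slice spectral sequence, or equivalently the homotopy fixed point spectral sequence. The goal is to exhibit a class
\[
\mathcal{P} \;:=\; N_{C_2}^{\Cn}\!\bigl(\bar{v}_h^{\,a}\bigr) \cdot u_V^{\,b} \;\in\; \pi_{2^{h+n+1}}^{\Cn} E_h,
\]
where $u_V$ is an $RO(\Cn)$-graded orientation class for a suitable virtual $\Cn$-representation $V$, with exponents $a,b$ and $V$ forced by requiring that the $RO(\Cn)$-degree of $\mathcal{P}$ collapse, under the orientation relations on $E_h$, exactly to the integer degree $2^{h+n+1}$. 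Invertibility of $\mathcal{P}$ is then automatic, since it is a product of norms of units with orientation units; the essential point is that $\mathcal{P}$ is a permanent cycle in the slice spectral sequence. This is verified via a gap and vanishing-line theorem for $E_h$ at height $h$, adapted from the height-$4$ analysis of Hill--Hopkins--Ravenel, placing $\mathcal{P}$ in a sparse bidegree region where no nontrivial differential can reach or originate at it.

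Once $\mathcal{P}$ is in hand, the promotion to $G$ proceeds in two steps. Because $H$ is the $2$-Sylow of $G_0$, the index $[G_0:H] = |G_0|/|H|$ is odd, and the multiplicative norm $\mathcal{P}_{G_0} := N_H^{G_0}(\mathcal{P}) \in \pi_{[G_0:H]\cdot P(h,H)}^{G_0} E_h$, constructed using the $\mathbb{E}_\infty$-norms on homotopy fixed points supplied by Goerss--Hopkins--Miller, is a unit of the desired degree $P(h,G_0)$. Finally, $G/G_0 \hookrightarrow \Gal(k/\mathbb{F}_p)$ acts on $E_h^{hG_0}$ by degree-preserving ring maps; since $\mathcal{P}_{G_0}$ is assembled from Galois-equivariant ingredients ($\bar{v}_h$ and the universal orientations $u_V$ are defined over $\mathbb{F}_p$), it is $G/G_0$-invariant and descends to a unit in $\pi_{P(h,G)}^G E_h$. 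The principal obstacle in this program is the explicit construction and permanent-cycle verification for $\mathcal{P}$ at arbitrary height, which requires both a clean structural description of the slice (or HFP) spectral sequence of $E_h$ uniform in $h$ and $n$, and careful bookkeeping of $RO(\Cn)$-graded orientation relations --- a substantial generalization of HHR's finite-height calculations to all heights.
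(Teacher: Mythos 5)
Your broad outline tracks the paper's: reduce to a $2$-Sylow subgroup $H$ of $G_0 = G\cap\mathbb{S}_h$, establish the periodicity there via orientation classes and norms, then pass to $G_0$ using the odd index, and finally to $G$ via Galois descent. But there are genuine gaps at the core of the argument.

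First, the $Q_8$ case cannot be handled "analogously via the index-two cyclic subgroup $C_4 \subset Q_8$" alone. For any single $C_4\langle x\rangle\subset Q_8$, every induced virtual representation of the form $\ind_{C_4\langle x\rangle}^{Q_8}(V)$ contains sign-representation terms $\sigma_i, \sigma_j, \sigma_k$ in a pattern that cannot be cancelled by integer multiples of itself and of $\rho_{Q_8}$; for instance $\ind_{C_4\langle i\rangle}^{Q_8}\bigl(2^{h+1}(1-\sigma_4)\bigr)=2^{h+1}(1+\sigma_i-\sigma_j-\sigma_k)$, and no $\mathbb{Z}$-linear combination of this with $\rho_{Q_8}$ lands in the integers. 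The paper must use $C_2$ together with \emph{all three} $C_4$-subgroups $C_4\langle i\rangle,C_4\langle j\rangle,C_4\langle k\rangle$, sum the three $C_4$-induced periodicities to get $2^{h+1}(3-\sigma_i-\sigma_j-\sigma_k)$, and only then combine with the two $C_2$-induced periodicities $2^{h+1}(1+\sigma_i+\sigma_j+\sigma_k\mp\mathbb{H})$ to reach $2^{h+4}$.

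Second, the permanent-cycle verification is not a "sparse bidegree" argument. Your $\mathcal{P}$ is a unit, hence sits in filtration $0$, and filtration-$0$ classes are never targets but can and typically \emph{do} support differentials --- indeed, the Slice Differentials Theorem is precisely a statement that orientation classes $u_{2\sigma}^{2^j}$ support long differentials. The argument the paper actually uses, for each $\Ci$-restriction, is the interaction of two sharp quantitative bounds: by the Slice Differentials Theorem $u_{2\sigma}^{2^h}$ is a $d_r$-cycle for $r$ up to $2^i(2^{h+1}-1)$, and by the strong horizontal vanishing line of \cite{DLS2022} at filtration $2^{h+i}-2^i+1$, any differential of length exceeding that bound has zero target; since $2^i(2^{h+1}-1) \geq 2^{h+i}-2^i+1$, the class is a permanent cycle. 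Importantly, the strong vanishing line at all heights $h$ and all $\Cn$ is a substantive theorem, not an "adaptation" of HHR's height-$4$ analysis --- your plan acknowledges this dependence but provides no route to it, and without it the argument stalls.

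Third, your plan constructs one integer-degree unit $\mathcal{P}$ at the $\Cn$ level directly, with unspecified exponents $a,b$ and representation $V$. The paper avoids this by inducting on $n$ using $RO(\Cn)$-graded periodicities: norm up the $C_{2^{k}}$ integer periodicity to get $(2^{h+k+1}+2^{h+k+1}\sigma)$, combine with $(2^{h+k+1}-2^{h+k+1}\sigma)$ coming from \cref{corollary:periodicityfromslicediffthm}, and sum to get $2^{h+k+2}$; the base case $n=1$ is the full Hahn--Shi computation of $E_h^{hC_2}$. If you insist on a single closed-form $\mathcal{P}$, you would need to unwind this induction explicitly and prove the permanent-cycle statement for the resulting product of $u_{2^j\lambda_i}$ classes, which requires showing each factor is a $d_r$-cycle for all relevant $r$ --- again via the vanishing-line argument at each cyclic level, not a gap argument. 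Finally, a minor point: $N_e^{G_0}(u)$ in the HHR genuine-norm sense lands in $\pi_{2\rho_{G_0}}^{G_0}E_h$, not in integer degree; you need either the Evens norm on homotopy-fixed-point rings or an orientation class to realign it, and that requires its own argument that the class is invertible and permanent. The paper sidesteps this (and the analogous issue for the $H\to G_0$ step) with a purely additive trick: $\prod_{\bar g\in G_0/H}\bar g u$ lies on the $0$-line of the $G_0$-HFPSS, restricts to a permanent cycle, and $\tr\circ\res = [G_0:H]$ is a $2$-local unit, giving the permanent cycle directly.
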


\cref{thm:A} recovers all previously known periodicities of $EO_h(G)$ established through explicit computations:
{\begin{multicols}{2}
\begin{itemize}
\item $EO_1(C_2)$: 8 (\cite{Bott59, Atiyah66KR})
\item $EO_2(C_2)$: 16 (\cite{MahowaldRezk09}) 
\item $EO_2(C_4)$: 32 (\cite{HHR16, BO16}))
\item $EO_2(C_6)$: 48 (\cite{MahowaldRezk09})
\item $EO_2(Q_8)$: 64 (\cite{DFHH14, DKLLW24})
\item $EO_2(G_{24})$: 192 (\cite{Bau08,DFHH14})
\columnbreak
\item $EO_2(G_{48})$: 192 (\cite{Bau08,DFHH14})
\item $EO_4(C_2)$: 64 (\cite{HS20, LSWX2019})
\item $EO_4(C_4)$: 128 (\cite{HSWX23})
\item $EO_4(C_8)$: 256 (\cite{HHR16,LWX24})
\item $EO_4(C_{12})$: 384 (\cite{HSWX23})
\item $EO_h(C_2)$: $2^{h+2}$ (\cite{HS20, LSWX2019})
\end{itemize}
\end{multicols}}

In particular, when $h = 1$ and $G = C_2$, we have $P(1,C_2) = 8$, and \cref{thm:A} recovers the Bott periodicity of real $K$-theory $KO$. At height $h = 2$, for $G = G_{48}$, one has $G_0 = G_{24}$, $H = Q_8$, and $P(2,G_{48}) = \frac{|G_{24}|}{|Q_8|} \cdot 2^{2+4} = 192$. This yields the 192-periodicity of $EO_2(G_{48})$, which is the $K(2)$-localization of $\textup{TMF}$ at the prime 2. When $h = 4$ and $G = C_8$, the theory $EO_4(C_8)$ is closely related to the Hill–Hopkins–Ravenel detection spectrum $\Omega$, and is $P(4,C_8) = 256$-periodic, sharing the same periodicity as $\Omega$.

Proving \cref{thm:A} requires the following results as input: 
\begin{enumerate}
\item The equivariant slice spectral sequence for norms of Real bordism theories $\MUG$ and $\BPG$, established by Hill, Hopkins, and Ravenel \cite{HHR16}.
\item The Real orientation of $E_h$, established by Hahn and the fifth author \cite{HS20}.  This serves as a bridge to transport all the computational results for localizations of $\BPG$ to $E_h$.
\item The strong vanishing line results of $E_h^{hG}$, for all finite subgroups $G \subset \mathbb{G}_h$, proved in \cite{DLS2022} by the first, third, and fifth authors.
\end{enumerate}  
In particular, \cref{thm:A} and its proof works for all forms of $E_h$, for any choice of $(k,\Gamma)$.


A crucial step in proving \cref{thm:A} is to study $E_h$ as an equivariant $G$-spectrum, with homotopy groups $\underline{\pi}_\star E_h$ graded by $RO(G)$, the real representation ring of $G$.  The $G$-spectrum $E_h$ admits various $RO(G)$-graded periodicities that are not directly detected by $E_h^{hG}$, which is a nonequivariant spectrum and can only detect integer-graded periodicities.  

This leads us to define and study the $RO(G)$-periodicity lattice $\La_{h, G}$ (\cref{sec:lattice}).  For $G$ a finite subgroup of $\mathbb{G}_h$, the $RO(G)$-periodicity lattice $\La_{h, G}$ is the free abelian subgroup of $RO(G)$ that is generated by all $V \in RO(G)$ such that $E_h$ is $V$-periodic as a $G$-spectrum. 

For a given pair $(h, G)$, the lattice $\La_{h, G}$ encodes the complexity of the $RO(G)$-graded homotopy groups $\pi_\star^G E_h$. More precisely, to compute all of $\pi^G_\star E_h$, it is enough to compute the homotopy groups in the quotient grading $RO(G)/\La_{h, G}$. 
\begin{defn} \rm \label[defn]{defn:complexity}
The \textit{complexity} of $\pi_\star^G E_h$ is the quotient $RO(G)/\La_{h, G}$. 
\end{defn}

The lattice $\La_{h, G}$ is also useful for determining the Spanier--Whitehead dual of $EO_h(G)$ in the $K(h)$-local category \cite{BGHS2022}, as well for computing $K(h)$-local Picard groups \cite{BBHS20,HLS21}.

The lattice $\La_{h, G}$ is determined by $\La_{h, H}$, where $H$ is a 2-Sylow subgroup of $G_0 = G \cap \mathbb{S}_h$.  From this point on, we will assume that $G$ is a finite 2-subgroup of $\mathbb{S}_h$, which is isomorphic to either $\Cn$ or $Q_8$.

\begin{thmx}[$RO(G)$-Periodicity, \cref{theorem:ROGperiodicity}] \label{thm:B} \hfill \\
As a $G$-spectrum, $E_h$ has the following $RO(G)$-graded periodicities:
\begin{enumerate}
\item  When $(h, G) = (2^{n-1} m, \Cn)$: 
\begin{enumerate}[label=(\alph*)]
        \item $\rho_{2^n} = 1 + \sigma + \lambda_1 + 2 \lambda_2 + \cdots + 2^{n-2}\lambda_{n-1}$;
        \item $2^{2^{n-i}m +n-i+1}-2^{2^{n-i}m+n-i}\lambda_{n-i}$, $1\leq i\leq n$.
\end{enumerate}
\item When $(h, G) = (4\ell+2, Q_8)$: 
\begin{enumerate}[label=(\alph*)]
\item $\rho_{Q_8} = 1+\sigma_i+\sigma_j+\sigma_k+\mathbb{H}$;
\item $2^{2\ell+2}+2^{2\ell+2}\sigma_i-2^{2\ell+2}\sigma_j-2^{2\ell+2}\sigma_k$;\\
$2^{2\ell+2}-2^{2\ell+2}\sigma_i+2^{2\ell+2}\sigma_j-2^{2\ell+2}\sigma_k$;\\
$2^{2\ell+2}-2^{2\ell+2}\sigma_i-2^{2\ell+2}\sigma_j+2^{2\ell+2}\sigma_k$;
\item $2^{h+2}+2^{h+2}\sigma_i-2^{h+1}\mathbb{H}$; \\
$2^{h+2}+2^{h+2}\sigma_j-2^{h+1}\mathbb{H}$;\\
$2^{h+2}+2^{h+2}\sigma_k-2^{h+1}\mathbb{H}$.
\end{enumerate}
\end{enumerate}
Here, the representations in $RO(\Cn)$ and $RO(Q_8)$ are defined as in \cref{subsec:Notations}.
\end{thmx}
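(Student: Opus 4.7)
The plan is to exhibit, for each representation $V$ in the list, an invertible class in $\pi_V^G E_h$, which yields $V$-periodicity. The periodicities divide into two types: the regular representations $\rho_{2^n}$ and $\rho_{Q_8}$ of (1a) and (2a), and the virtual-dimension-zero periodicities in (1b), (2b), and (2c). My main inputs are the Real orientation of $E_h$ due to Hahn and Shi (providing a $G$-equivariant ring map $\BPG \to E_h$), the HHR slice analysis of $\BPG$, and the strong vanishing lines of \cite{DLS2022} for $E_h^{hG}$.

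For the regular representation periodicities (1a) and (2a), I would construct a unit in $\pi_{\rho_{2^n}}^{\Cn} E_h$ (respectively $\pi_{\rho_{Q_8}}^{Q_8} E_h$) via the Real orientation. Hahn--Shi produces classes in $\pi_V^G \BPG$ for various $V$ built from the regular representation; composing with the map to $E_h$ and inverting the appropriate $\bar{v}_h$-class (a unit in $E_h$ by $K(h)$-locality) yields an invertible class in the desired grading. The decompositions $\rho_{2^n} = 1 + \sigma + \lambda_1 + 2\lambda_2 + \cdots + 2^{n-2}\lambda_{n-1}$ and $\rho_{Q_8} = 1 + \sigma_i + \sigma_j + \sigma_k + \mathbb{H}$ reflect how the unit is assembled from orientation classes for the complex and quaternionic summands together with the Real orientation class for $\sigma$.

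For the virtual-dimension-zero periodicities, each listed $V$ corresponds---after multiplication by appropriate orientation classes---to invertibility of a power of an Euler class: for instance, $V = 2^{k+1} - 2^k \lambda_{n-i}$ with $k = 2^{n-i}m + n - i$ corresponds to $a_{\lambda_{n-i}}^{2^k}$ being a unit in $\pi_\star^{\Cn} E_h$, and the $Q_8$-cases are analogous for $a_{\sigma_\bullet}$ and $a_{\mathbb{H}}$. The strong vanishing lines of \cite{DLS2022} are the essential input: they ensure the homotopy fixed point spectral sequence for $E_h^{hG}$ vanishes above a specific filtration, which translates to the invertibility of exactly the stated powers of Euler classes in $\pi_\star^G E_h$. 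For the $Q_8$-cases in (2b) and (2c), the three conjugate periodicities indexed by $\sigma_i, \sigma_j, \sigma_k$ should reduce to a single representative via the outer-automorphism symmetry of $Q_8$ permuting them.

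The principal obstacle will be matching the exponents exactly. The vanishing lines of \cite{DLS2022} provide filtration bounds in the homotopy fixed point spectral sequence, and converting these into the precise $RO(G)$-degrees listed in the theorem requires careful bookkeeping of the orientation-class contributions. In particular, the $i$-dependent exponent $2^{n-i}m + n - i$ in (1b) reflects the differing depths at which the Euler classes $a_{\lambda_{n-i}}$ sit in the slice filtration of $\BPG$; tracking these depths through the Real orientation map to $E_h$ is the computational core of the argument.
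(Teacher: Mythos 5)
Your proof strategy for (1b), (2b), and (2c) rests on an incorrect identification. You claim that the virtual-dimension-zero periodicity $V = 2^{k+1} - 2^k\lambda_{n-i}$ corresponds to the Euler class $a_{\lambda_{n-i}}^{2^k}$ being invertible in $\pi_\star^{\Cn} E_h$. This is wrong in two ways. First, the relevant criterion (\cref{lemma:pctoperiodicity} in the paper) is that $V$-periodicity holds if and only if the \emph{orientation class} $u_{2^k\lambda_{n-i}}$, of bidegree $(V,0)$, is a \emph{permanent cycle} in the $G$-HFPSS; this class is automatically invertible on the $\mathcal{E}_2$-page, and the content of the theorem is that it survives. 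Second, Euler classes $a_\lambda$ are essentially never invertible in $\pi_\star^G E_h$: since $E_h$ is cofree, inverting the Euler class of any nontrivial representation produces a contractible spectrum (e.g.\ $a_{\lambda_{n-1}}^{-1}E_h$ has contractible underlying spectrum and hence is itself contractible). Having $a_\lambda^m$ invertible would moreover encode $(-m\lambda)$-periodicity, not $(2m - m\lambda)$-periodicity; these are related but distinct assertions.

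Beyond the mischaracterization, the mechanism you propose --- vanishing lines plus exponent bookkeeping --- does not account for the theorem's structure. The strong vanishing lines of \cite{DLS2022} can only conclude that a class is a permanent cycle once one already knows it is an $r$-cycle for $r$ beyond the vanishing filtration; the paper uses this precisely once, in \cref{corollary:periodicityfromslicediffthm}, combining the Slice Differentials Theorem with the vanishing line to handle $u_{2\sigma}^{2^h}$ (the seed for the case $i=n$ and the $n=1$ base case). The remaining periodicities in (1b) require two further ingredients that are absent from your outline: the Transchromatic Isomorphism Theorem of \cite{MSZ24} (which states that $u_V$ for $V$ pulled back from $\Cn/C_2$ survives in the $\Cn$-HFPSS of $E_h$ iff it survives in the $\Cnminusone$-HFPSS of $E_{h/2}$, supplying the periodicities with $2\leq i\leq n$ by induction on $n$), and the norm lemma $\Ind_{\Cnminusone}^{\Cn}(-)$ (\cref{lemma:normperiodicity}, supplying the $i=1$ periodicity when combined with the already-obtained $\sigma$-periodicity). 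For (2b) and (2c) the mechanism is again the norm, this time applied to the three $C_4$-subgroups of $Q_8$ using the $C_4$-level periodicities from (1b) --- your remark about the outer automorphism of $Q_8$ identifies the symmetry of the answer but does not produce the periodicities. Without the transchromatic isomorphism and the norm, there is no route to the stated exponents.
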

To prove the $RO(G)$-periodicities in \cref{thm:B}, we use two inductive methods to analyze the lattice $\La_{h,G}$.  
\begin{enumerate}
    \item Transchromatic isomorphism: as a consequence of \cite[Theorem~B]{MSZ24}, if $G = \Cn$ and $V \in \La_{h/2, G/C_2}$ with $|V| = 0$, then $V \in \La_{h,G}$.  Here, $V$ is viewed as an element of $RO(G)$ via the pullback $G \to G/C_2$. 
    \item The norm: the collection of abelian groups $\{\La_{h,H} \mid H \subset G\}$ forms a Mackey functor $\underline{\La}_{h,G}$, induced by the Hill--Hopkins--Ravenel norm functor $N_H^G(-)$ \cite{HHR16}.  Therefore, if $V \in \La_{h,H}$, then $\Ind_H^G(V) \in \La_{h,G}$. 
\end{enumerate}
Applying these two inductive methods, we prove all the $RO(G)$-periodicities in \cref{thm:B}. 

In general, determining the full periodicity lattice $\La_{h,G}$ and being able to compute the quotient $RO(G)/\La_{h,G}$ for arbitrary $h$ and $G$ is a difficult task, as it requires extensive computations of the $RO(G)$-graded homotopy groups $\pi_\star^G E_h$. Nevertheless, \cref{thm:B} provides enough $RO(G)$-periodicities to produce a significant sublattice $\La'_{h,G} \subset \La_{h,G}$. In particular, the sublattice \textit{$\La'_{h,G}$ has full rank in $RO(G)$}, and the finite quotient group $RO(G)/\La'_{h,G}$ can be explicitly identified. Since $RO(G)/\La'_{h, G}\to RO(G)/\La_{h, G}$ is a further quotient,  $\La_{h,G}$ itself is a full-rank sublattice of $RO(G)$, leading to the following finiteness result on the complexity of $\pi_\star^G E_h$.

\begin{thmx}[Finite Complexity, \cref{thm:finiteComplexity}] \label{thm:C} \hfill \\
The complexity of $\pi_\star^G E_h$ is finite, given by the following bounds: 
\begin{enumerate}
\item When $(h,G)=(2^{n-1} m, \Cn)$, 
\[RO(G)/\La'_{h,G} \cong  \bigoplus_{i=1}^{n-1} \mathbb{Z}/2^{2^{n-i-1}m+n-i} \oplus \mathbb{Z}/2^{h+n+1}.\]
\item When $(h,G)=(4\ell+2, Q_8)$, 
\[RO(G)/\La'_{h,G} \cong \mathbb{Z}/2^{2\ell+2}\oplus \mathbb{Z}/2^{2\ell+3}\oplus\mathbb{Z}/2^{2\ell+3}\oplus \mathbb{Z}/2^{4\ell+6}.\]
\end{enumerate}
\end{thmx}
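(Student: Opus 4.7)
My plan is to reduce both statements to explicit Smith normal form computations. In each case, I would express the generators of $\La'_{h,G}$ supplied by Theorem B as columns of an integer matrix $M$ in a standard $\mathbb{Z}$-basis of $RO(G)$, verify that $M$ has full column rank (so the quotient is finite), and then read off the invariant factors.

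For the cyclic case, take the standard basis $\{1, \sigma, \lambda_1, \ldots, \lambda_{n-1}\}$ of $RO(\Cn)$. Theorem B(1) provides $n+1$ generators: $\rho_{2^n}$ together with the $n$ relations
\[ r_i = 2^{2^{n-i}m + n - i + 1}\cdot 1 - 2^{2^{n-i}m + n - i}\lambda_{n-i}, \qquad 1 \le i \le n, \]
where for $i = n$ I interpret $\lambda_0 = 2\sigma$, so that $r_n = 2^{m+1}(1-\sigma)$. Each $r_i$ with $i \le n-1$ is supported on exactly $\{1, \lambda_{n-i}\}$ and can be used to diagonalize the corresponding $\lambda_{n-i}$-coordinate, contributing the invariant factor $2^{2^{n-i-1}m + n - i}$. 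What remains is a $2 \times 2$ block in the $\{1, \sigma\}$-coordinates spanned by the reduced $\rho_{2^n}$ and by $r_n$, whose determinant has 2-adic valuation $h+n+1$; diagonalizing this block produces the final invariant factor $2^{h+n+1}$ and proves (1).

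For the quaternionic case, take the basis $\{1, \sigma_i, \sigma_j, \sigma_k, \mathbb{H}\}$ of $RO(Q_8)$ and assemble the $5 \times 7$ matrix from Theorem B(2). I would process the three blocks of generators in order. The three generators from (b) form a Hadamard-like subblock on $\{1, \sigma_i, \sigma_j, \sigma_k\}$ with common factor $2^{2\ell+2}$: pairwise sums and differences yield the three relations $2^{2\ell+3}(\sigma_i - \sigma_j)$, $2^{2\ell+3}(\sigma_i - \sigma_k)$, and a single $2^{2\ell+2}$-relation on the trivial coordinate, producing invariant factors $2^{2\ell+2}, 2^{2\ell+3}, 2^{2\ell+3}$. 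Next, the three generators in (c) each have the form $2^{h+2}(1 + \sigma_x) - 2^{h+1}\mathbb{H}$; combining them with $\rho_{Q_8} \equiv 0$ (which gives $\mathbb{H} \equiv -(1 + \sigma_i + \sigma_j + \sigma_k)$) produces a single remaining relation on the trivial coordinate with 2-adic valuation $h+4 = 4\ell+6$. This furnishes the final invariant factor and completes (2).

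The main obstacle is careful bookkeeping of 2-adic valuations. Because the generators differ in magnitude by large powers of $2$, naively combining rows and columns can introduce spurious common factors that artificially lower an invariant factor. In the quaternionic case this is especially delicate: the Hadamard-like block in (b) must be fully reduced before the $\mathbb{H}$-containing generators in (c) are brought in, and $\rho_{Q_8}$ must be used last so that its trivial-coordinate contribution lands at exactly valuation $h+4$ rather than something smaller. Carrying out this reduction in the correct order and verifying the valuations at each step is the technical heart of the proof.
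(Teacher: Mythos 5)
Your overall strategy---assemble the generators from Theorem~B into an integer matrix in a standard basis of $RO(G)$ and compute its Smith normal form---is exactly what the paper does, and the matrix you describe is (up to transpose and omitting two visibly redundant $Q_8$-generators) the paper's matrix. However, the roadmap you give for carrying out the SNF contains a genuine gap. In the cyclic case you assert that each $r_i$ with $i \le n-1$, supported on $\{1,\lambda_{n-i}\}$, ``can be used to diagonalize the $\lambda_{n-i}$-coordinate, contributing the invariant factor $2^{2^{n-i-1}m+n-i}$.'' But the $\lambda_{n-i}$-coefficient of $r_i$ is $-2^{2^{n-i}m+n-i}$, whose exponent carries $2^{n-i}m$, not $2^{n-i-1}m$; these differ by the large factor $2^{2^{n-i-1}m}$, so the claimed invariant factor does not come from diagonalizing against $r_i$'s $\lambda_{n-i}$-entry. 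Already for $n=2$: the theorem's small summand is $\mathbb{Z}/2^{m+1}$, yet $r_1$'s $\lambda_1$-coefficient is $-2^{2m+1}$; the $2^{m+1}$ in fact appears only after the column operation killing $\rho$'s entries pushes a $\pm 2^{m+1}$ into the $\lambda_1$-column of $r_n$. Likewise, the $2\times 2$ block on $\{1,\sigma\}$ spanned by $\rho$ and $r_n$ has determinant $\pm 2^{m+2}$, not $\pm 2^{h+n+1}$; the remaining powers of $2$ must be transferred onto that block by the earlier column operations, but your sketch does not track this bookkeeping, which is precisely the delicate point you flag but do not resolve.

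The same issue appears in the $Q_8$ case: after substituting $\mathbb{H}\equiv-(1+\sigma_i+\sigma_j+\sigma_k)$ into a generator from $(c)$, the result still has nontrivial $\sigma$-coordinates, so it is not a ``relation on the trivial coordinate,'' and the additional powers of $2$ needed to reach valuation $4\ell+6$ must again be supplied by interaction with the $(b)$-block. None of this is fatal---the theorem is true, and the paper's own proof is essentially the one-line instruction ``compute the SNF''---but your intermediate invariant-factor attributions would fail if followed literally. To repair the argument, either carry out the row/column reduction in full (tracking how the column operations that clear $\rho$ redistribute valuations onto the other rows), or verify the stated decomposition directly by checking that the $\gcd$ of the $k\times k$ minors of the matrix has the $2$-adic valuation implied by the partial products of the claimed invariant factors, with the top factor fixed by the determinant.
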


The strength of \cref{thm:C} is that it reduces computations with $EO_h(G)$ to a finite window. In particular, the differentials and (hidden) extensions in the spectral sequences computing $\pi_\star^G E_h$ repeat with a fixed shift, so it is enough to determine them in a bounded range and then extend the pattern accordingly. We will illustrate this in \cref{sec:E2C4} through an explicit example.

In \cref{sec:norm} and \cref{sec:sharpness}, we turn our attention to the relationship between periodicities, vanishing lines, and differentials. The norm functor $N_H^G(-)$ will continue to play a crucial role. In particular, the $RO(G)$-periodicities arise from orientation classes that are permanent cycles. Since the norm of a permanent cycle is still a permanent cycle, the norm converts $RO(H)$-graded periodicities into $RO(G)$-graded ones.

To make further use of the norm functor, we study norm structures on filtered equivariant spectra and their effect on differentials. This leads to differential formulas for classes that were not permanent cycles before applying the norm. We refer to this as the \textit{norm-transfer differential formula} (\cref{thm:predicttransfer}). A variant of this formula played an important role in establishing vanishing lines in \cite{DLS2022} and in computations in \cite{HHR17, HSWX23, MSZ23}. As a consequence of the norm-transfer formula, we obtain a family of differentials on orientation classes, which in turn provides finer control over periodicities.

\begin{thmx}[Differentials on Orientation Classes, \cref{thm:predicteddiff}]\label{thm:D}\,
\begin{enumerate}
\item When $(h, G) = (2^{n-1}m, \Cn)$, we have the following differentials in the $\Cn$-slice spectral sequence and the $\Cn$-homotopy fixed point spectral sequence of $E_h$: 
\[d_{2^{\ell+1}-1} \left(u_{2^{\ell+n-2}\lambda_{n-1}}\right)=\tr_{C_2}^{C_{2^{n}}}\left(\bar{v}_{\ell} a_{(2^{\ell+1}-1)\sigma_2}u_{(2^{\ell+n-1}-2^\ell )\sigma_2}\right)u_{W_{\ell}}, \,\,\, 1\leq \ell \leq h,\]
where $W_{\ell} = 2^\ell \sigma_{2^{n}}+2^\ell \lambda_1+2^{\ell+1}\lambda_2 + \cdots + 2^{\ell+n-3}\lambda_{n-2}.$
\item When $(h, G) = (4m+2, Q_8)$, we have the following differentials in the $Q_8$-homotopy fixed point spectral sequence of $E_h$: 
\[d_{2^{\ell+1}-1} \left(u_{2^\ell\mathbb{H}}\right)=\tr_{C_2}^{Q_8}\left(\bar{v}_{\ell} a_{(2^{\ell+1}-1)\sigma_2}u_{3 \cdot 2^{\ell} \sigma_2}\right)u_{2^\ell \sigma_i} u_{2^\ell \sigma_j}u_{2^\ell \sigma_k}, \,\,\, 1\leq \ell \leq h.\]
\end{enumerate}
\end{thmx}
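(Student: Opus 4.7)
The plan is to derive these differentials as direct consequences of the norm-transfer differential formula (\cref{thm:predicttransfer}), starting from the base-case Real-bordism differentials and propagating them up through the subgroup lattice of $G$ via norms and restrictions. The input at the bottom of the tower is the classical Hill--Hopkins--Ravenel family of differentials in the $C_2$-slice spectral sequence of $\BPR$, namely
\[
d_{2^{\ell+1}-1}\!\left(u_{2^\ell \sigma_2}\right) \;=\; \bar{v}_\ell \, a_{(2^{\ell+1}-1)\sigma_2}, \qquad 1 \le \ell \le h,
\]
which transport to the $C_2$-slice spectral sequence of $E_h$ by the Real orientation of Hahn--Shi and survive (at least through filtration where nothing has been killed) by the vanishing-line results of \cite{DLS2022} that prevent spurious earlier differentials from targeting $\bar{v}_\ell a_{(2^{\ell+1}-1)\sigma_2}$.

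First I would establish the representation-theoretic bookkeeping. For the $\Cn$ case, the restriction of $\lambda_{n-1}$ to $C_2$ is $2\sigma_2$, so
\[
\Res^{C_{2^n}}_{C_2} u_{2^{\ell+n-2}\lambda_{n-1}} \;=\; u_{2^{\ell+n-1}\sigma_2} \;=\; \left(u_{2^\ell \sigma_2}\right)^{2^{n-1}} \cdot (\text{unit powers}),
\]
and similarly for $\Cn$ one uses $\Res^{\Cn}_{C_{2^k}}\lambda_i$ for $i<k$ (which splits as a sum of lower $\lambda$'s) to recognize the orientation classes. For the $Q_8$ case, the restriction of $\mathbb{H}$ to any $C_2 \subset Q_8$ is $2\sigma_2$, so $u_{2^\ell \mathbb{H}}$ restricts in the same way. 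These identifications place us in the hypotheses of the norm-transfer differential formula: the class $u_{2^{\ell+n-2}\lambda_{n-1}}$ (respectively $u_{2^\ell \mathbb{H}}$) is a lift to the $G$-equivariant setting of a power of $u_{2^\ell \sigma_2}$ viewed $C_2$-equivariantly, and therefore its first possible differential is controlled by the transfer of the differential of the restricted class.

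Next, I would apply \cref{thm:predicttransfer} directly. The formula promotes the base-case differential to
\[
d_{2^{\ell+1}-1}\!\left(u_{2^{\ell+n-2}\lambda_{n-1}}\right) \;=\; \tr_{C_2}^{\Cn}\!\left(\bar{v}_\ell \, a_{(2^{\ell+1}-1)\sigma_2}\cdot x_\ell\right)\cdot u_{W_\ell},
\]
where $x_\ell$ is the orientation correction coming from identifying $\Res^{\Cn}_{C_2}(u_{2^{\ell+n-2}\lambda_{n-1}})$ with a power of $u_{2^\ell\sigma_2}$ times $u_{(2^{\ell+n-1}-2^\ell)\sigma_2}$, and $u_{W_\ell}$ collects the orientation classes of the representations $2^\ell\sigma_{2^n}+2^\ell\lambda_1+\cdots+2^{\ell+n-3}\lambda_{n-2}$ that appear after factoring out $\lambda_{n-1}$. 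Matching the exponents is a direct computation from the virtual representation identity; this gives the stated formula for part (1). For the $Q_8$ case, the same argument with $\mathbb{H}$ in place of $\lambda_{n-1}$ and the three sign representations $\sigma_i,\sigma_j,\sigma_k$ replacing the $\lambda_i$'s yields part (2), since $\Res^{Q_8}_{C_2}\mathbb{H} = 2\sigma_2$ again and the residual $u_{W_\ell}$ becomes $u_{2^\ell \sigma_i}u_{2^\ell \sigma_j}u_{2^\ell \sigma_k}$.

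The main obstacle is verifying the precise multiplicative bookkeeping needed to apply \cref{thm:predicttransfer}. One must check (i) that the transfer source class $\bar{v}_\ell a_{(2^{\ell+1}-1)\sigma_2}u_{(2^{\ell+n-1}-2^\ell)\sigma_2}$ really does land in the correct slice filtration so that the formula produces a nonzero differential at page $2^{\ell+1}-1$ and not earlier; and (ii) that no earlier differential kills $u_{2^{\ell+n-2}\lambda_{n-1}}$ or its proposed target. Step (i) follows because the restriction argument pins down the filtration exactly, and step (ii) follows from the strong vanishing-line bounds of \cite{DLS2022}, which rule out candidate earlier differentials in the relevant bidegrees. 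Once these two checks are in place, the formulas follow directly.
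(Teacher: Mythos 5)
Your proposal correctly identifies the starting data (the Hahn--Shi $C_2$-differentials $d_{2^{\ell+1}-1}(u_{2^\ell\sigma_2}) = \bar{v}_\ell a_{(2^{\ell+1}-1)\sigma_2}$) and correctly recognizes that \cref{thm:predicttransfer} with $H=C_2$ is the engine. But there is a genuine gap that the proposal glosses over. Applying the norm-transfer formula to $u_{2^\ell\sigma_2}$ does not directly give a differential on $u_{2^{\ell+n-2}\lambda_{n-1}}$; it gives a differential on $N_{C_2}^{\Cn}(u_{2^\ell\sigma_2})$, which by the norm formula $N_H^G(u_V) = u_{\Ind_H^G V}/u_{\Ind_H^G|V|}$ equals the \emph{quotient}
\[
N_{C_2}^{\Cn}(u_{2^\ell\sigma_2}) \;=\; \frac{u_{2^{\ell+n-2}\lambda_{n-1}}}{u_{2^\ell\sigma_{2^n}}\,u_{2^\ell\lambda_1}\cdots u_{2^{\ell+n-3}\lambda_{n-2}}} \;=\; \frac{u_{2^{\ell+n-2}\lambda_{n-1}}}{u_{W_\ell}}.
\]
To convert the differential on this quotient into the stated differential on $u_{2^{\ell+n-2}\lambda_{n-1}}$, you must multiply through by $u_{W_\ell}$ via the Leibniz rule, and this requires \emph{proving that every factor of $u_{W_\ell}$ is itself a $d_{2^{\ell+1}-1}$-cycle}. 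That is the bulk of the argument: $u_{2^\ell\sigma_{2^n}}$ is a $d_{2^{\ell+1}-1}$-cycle by the Slice Differentials Theorem, and each $u_{2^{\ell+i-1}\lambda_i}$ is shown to be a $d_{2^{\ell+1}-1}$-cycle by a separate induction on $n$, which applies \cref{thm:predicttransfer} again to orientation classes at the $\Cnminusone$-level and uses the norm formula to strip off denominators. Your proposal never makes this argument; it only asserts that $u_{W_\ell}$ ``collects the orientation classes'' and then moves on.

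Relatedly, the two ``checks'' you identify as the main obstacles --- that the transfer class has the right filtration, and that no earlier differential interferes via the strong vanishing line of \cite{DLS2022} --- are not the right concerns here and are not part of the proof. The vanishing line controls the maximal length of differentials (used in \cref{thm:A}), but it says nothing about whether $u_{W_\ell}$ survives to the $\mathcal{E}_{2^{\ell+1}-1}$-page, which is what the Leibniz step needs. For the $Q_8$ case you have the analogous omission: after applying the norm-transfer formula you get a differential on $N_{C_2}^{Q_8}(u_{2^\ell\sigma_2}) = u_{2^\ell\mathbb{H}}/(u_{2^\ell\sigma_i}u_{2^\ell\sigma_j}u_{2^\ell\sigma_k})$, and one must still show the denominator is a $d_{2^{\ell+1}-1}$-cycle --- this is done by writing $u_{2^\ell\sigma_i}u_{2^\ell\sigma_j}u_{2^\ell\sigma_k}$ as the product $N_{C_4\langle i\rangle}^{Q_8}(u_{2^\ell\sigma_4})N_{C_4\langle j\rangle}^{Q_8}(u_{2^\ell\sigma_4})N_{C_4\langle k\rangle}^{Q_8}(u_{2^\ell\sigma_4})$ and invoking the $C_4$-level fact that $u_{2^\ell\sigma_4}$ is a $d_{2^{\ell+1}-1}$-cycle. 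You should add both of these survival arguments before the formulas can be deduced.
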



Finally, in \cref{sec:E2C4}, we present a computational application of the Periodicity Theorem (\cref{thm:A}), using it as a central input to establish differentials and resolve extension problems in the equivariant slice spectral sequence. More specifically, we compute the $C_4$-slice spectral sequence of $D^{-1}\BPCfour \langle 1 \rangle$, where $D = N_{C_2}^{C_4}(\bt_1)$. This periodic Hill–Hopkins–Ravenel theory is a model for the $C_4$-equivariant Lubin–Tate theory $E_2$ \cite{BHSZ21}.

A highlight of this computation is that all higher differentials can be derived from a single $d_3$-differential by exploiting qualitative features of the equivariant slice spectral sequence of $E_2$, such as periodicity, transchromatic phenomena, and strong vanishing lines. We remark that these structural theorems extend to $EO_h(G)$ at higher heights as well. However, computations for $EO_h(G)$ become increasingly intricate as $h$ and $G$ grow, and knowing only the lower differentials together with these structural results is unlikely to yield a complete picture. Nevertheless, these principles offer a more conceptual and streamlined framework for extending computations to higher heights, revealing substantial new information about their structure.


\subsection{Organization of the paper}
In \cref{sec:intP2}, we establish periodicities for $EO_h(G)$ for any finite subgroup $G \subset \mathbb{G}_h$, proving \cref{thm:A}.  In \cref{sec:lattice}, we analyze the $RO(G)$-periodicities of $E_h$ and prove \cref{thm:B} (\cref{theorem:ROGperiodicity}) and \cref{thm:C} (\cref{thm:finiteComplexity}). 

In \cref{sec:norm}, we define norm structures on filtered equivariant spectra and analyze their effect on differentials. We show that the presence of a norm structure relates differentials across spectral sequences associated to various subgroups (\cref{thm:predicttransfer}). In \cref{sec:sharpness}, we apply this framework to the homotopy fixed point spectral sequences of Lubin–Tate theories, proving \cref{thm:D}.

Finally, in \cref{sec:E2C4}, we present a computational application of \cref{thm:A}, using it as a central input to establish differentials and resolve extension problems in the $C_4$-slice spectral sequence of $D^{-1}\BPCfour \langle 1 \rangle$, which serves as a model for the $C_4$-equivariant Lubin–Tate theory $E_2$. All higher differentials are obtained from a single $d_3$-differential by using periodicity, transchromatic phenomena, and strong vanishing lines present in the equivariant slice spectral sequence.

\subsection{Acknowledgements}
The authors would like to thank Agn\`{e}s Beaudry, Mark Behrens, Andrew Blumberg, Paul Goerss, Mike Hopkins, J.D. Quigley, Doug Ravenel, Yuchen Wu, and Lucy Yang for helpful conversations. 

This material is based upon work supported by the National Natural Science Foundation of China under Grant No.~12401084 (first author), by the National Science Foundation under Grant No.~DMS-2528364 (second author), DMS-2313842 and DMS-2404828 (fifth author), and the AMS Centennial Fellowship (seventh author).
The sixth author is partially supported by the following Grants: NSFC-12325102, NSFC-12226002, the New Cornerstone Science Foundation, Shanghai Pilot Program for Basic Research–Fudan University 21TQ1400100 (21TQ002), Shanghai Municipal Science and Technology Commission 248002139.

\subsection{Notations and conventions} \label{subsec:Notations}

\begin{itemize}
\item Let $RO(G)$ denote the real representation ring of $G$. Given a $G$-spectrum $X$ and a subgroup $H$ in $G$, we use $\pi_\star^H X$ to denote the $H$-equivariant homotopy group $[S^\star, X]^H$, where $\star\in RO(G)$. This group $\pi_\star^H X$ admits an action by the Weyl group $W_G(H)=N_G(H)/H$. In particular, when $H=e$, the underlying homotopy group $\pi_\star^e X$ is a $G$-module. 

\item When $G$ is a cyclic 2-group and its order is clear from the context, we denote its sign representation by $\sigma$. In \cref{sec:lattice} and \cref{sec:norm}, when there are multiple cyclic groups and we need to distinguish the sign representation associated to different cyclic group $G$, we will use $\sigma_{2^i}$ to indicate it is the sign representation of the cyclic group $C_{2^i}$.  

\item When $G = \Cn$, let $\lambda_i$ be the $2$-dimensional $\Cn$-representation corresponding to rotation by $(\frac{\pi}{2^i})$, where $0 \leq i \leq n-1$. Note that $\lambda_0 = 2 \sigma_{2^n}$. When working $2$-locally, the representation spheres associated to rotations by $(\frac{j\pi}{2^{n-1}})$ and $(\frac{j'\pi}{2^{n-1}})$ are equivalent whenever $j$ and $j'$ have the same $2$-adic valuation. As a result, when we consider $RO(\Cn)$-graded homotopy groups, it is enough to work with the gradings generated by the irreducible representations $\{1, \sigma_{2^{n}}, \lambda_1, \dots, \lambda_{n-1}\}$. 

\item We will write $\rho_{2^n}$ for the regular representation of $\Cn$. When working 2-locally, the representation sphere associated to $\rho_{2^n}$ is equivalent to the representation sphere associated to $1 + \sigma_{2^n} + \lambda_1 + 2\lambda_2 + \cdots + 2^{n-2}\lambda_{n-1}$. By an abuse of notation, we will write $\rho_{2^n} = 1 + \sigma_{2^n} + \lambda_1 + 2\lambda_2 + \cdots + 2^{n-2}\lambda_{n-1}$.
    
\item When $G=Q_8$, let $\sigma_i$, $\sigma_j$, and $\sigma_k$ be three $1$-dimensional sign representations of $Q_8$ with kernels $C_4\langle i \rangle$, $C_4\langle j \rangle$, and $C_4\langle k \rangle$, respectively. 
We also denote $\mathbb{H} = \mathbb{R} \oplus \mathbb{R}i \oplus \mathbb{R}j \oplus \mathbb{R}k$ as the quaternion algebra, which is a $4$-dimensional irreducible representation of $Q_8$ given by left multiplication. 

\item We will denote the $G$-slice spectral sequence by $G$-SliceSS and the $G$-homotopy fixed point spectral sequence by $G$-HFPSS. 

\end{itemize}
\section{Periodicities of \texorpdfstring{$EO_h(G)$}{text}}\label{sec:intP2}
In this section, we will establish periodicities for $EO_h(G)$ when $G$ is any finite subgroup of $\mathbb{G}_h$, proving \cref{thm:A}.  We will do so by first establishing periodicities for $EO_h(G)$ when $G$ is a finite 2-subgroup of $\mathbb{S}_h$ (\cref{thm:cyclicperiodicity} and \cref{thm:Q8periodicity}), and then extending to the general case.





\begin{defn}\label[defn]{defn:Gperiodicity}\rm
Let $R$ be a $G$-equivariant homotopy commutative ring spectrum, and let $V$ be a virtual $G$-representation. We say that $R$ is \textit{$V$-periodic} if there exists a $G$-equivariant map
    \[
    \varphi_V: S^V\longrightarrow R
    \]
    such that the composition map 
    \[
   \widetilde{\varphi}_V \colon  R\wedge S^V \xrightarrow{ \mathrm{id}\wedge  \varphi_V} R\wedge R\xrightarrow{m} R
    \]
    is a $G$-equivalence.
    \end{defn}

It is immediate from \cref{defn:Gperiodicity} that if $R$ is $V$-periodic, then there is an invertible element $\varphi_V \in \pi_V^G R$.  When the virtual representation $V$ is of the special form $(W-|W|)$ for a $G$-representation $W$, \cref{defn:Gperiodicity} coincides with the notion of an $R$-orientation for $W$ in \cite[Definition~3.3]{BBHS20}.

\begin{rem}\rm \label[rem]{rem:moduleEquivGivesHtpyMap}
Suppose $f: R \wedge S^V \to R$ is a $G$-equivalence of $R$-modules, then we can define $\varphi_V$ to be the composition
\[\varphi_V: S^V \simeq S^0 \wedge S^V \xrightarrow{u \wedge \text{id}} R \wedge S^V \xrightarrow{f} R.  \]
A direct verification shows that $\widetilde{\varphi}_V = f$.  Thus, the equivalence $f$ establishes $V$-periodicity for $R$ as defined in \cref{defn:Gperiodicity}. 
\end{rem}

It is straightforward to check that if $R$ is both $V_1$-periodic and $V_2$-periodic, then it is also $(V_1 + V_2)$-periodic. Moreover, if $R$ is $V$-periodic, then it is also $(-V)$-periodic by \cref{rem:moduleEquivGivesHtpyMap}. This shows that given a set of periodicities for $R$, any integer linear combination of them is again a periodicity for $R$. 

In this section, let $\SpG$ denote the category of equivariant orthogonal $G$-spectra following \cite{MM02}.

\begin{lem}\label[lem]{lemma:normperiodicity}
Let $G$ be a finite group, $H$ a subgroup of $G$, $V$ an $H$-representation, and $R$ a $G$-commutative ring spectrum in $\SpG$. If $i_H^* R$ is $V$-periodic, then $R$ is $W$-periodic, where $W =\mathrm{\Ind}_H^G V$ is the induced representation of $V$. 
\end{lem}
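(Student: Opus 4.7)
The plan is to build the periodicity map $\varphi_W$ by applying the Hill--Hopkins--Ravenel norm functor $N_H^G : \SpH \to \SpG$ to the given $H$-periodicity map $\varphi_V$, and then postcomposing with the $G$-commutative ring structure on $R$. Concretely, first I apply $N_H^G$ to $\varphi_V : S^V \to i_H^* R$. Since $N_H^G$ is symmetric monoidal and carries a representation sphere to the sphere of the induced representation, $N_H^G(S^V) \simeq S^{\mathrm{Ind}_H^G V} = S^W$. A $G$-commutative ring structure on $R$ provides a canonical $G$-equivariant structure map $\epsilon : N_H^G(i_H^* R) \to R$. I then define
\[
\varphi_W \;:=\; \epsilon \circ N_H^G(\varphi_V) \;:\; S^W \longrightarrow R.
\]

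To finish, I need to show that $\widetilde{\varphi}_W : R \wedge S^W \to R$ is a $G$-equivalence, which by \cref{rem:moduleEquivGivesHtpyMap} and the usual interaction between module equivalences and invertible elements in a commutative ring spectrum, is equivalent to the class $[\varphi_W]\in \pi_W^G R$ being a unit in the $RO(G)$-graded ring $\pi_\bigstar^G R$. For this I invoke the internal multiplicative norm on homotopy groups of a $G$-commutative ring spectrum: the construction above globalizes to a map
\[
N_H^G \;:\; \pi_V^H R \longrightarrow \pi_W^G R, \qquad [f]\;\longmapsto\; [\epsilon \circ N_H^G(f)],
\]
which satisfies $N_H^G(xy) = N_H^G(x)\,N_H^G(y)$ and $N_H^G(1)=1$, both as consequences of the symmetric monoidality of the norm together with compatibility of $\epsilon$ with the ring structure (standard, as in HHR Appendix~A). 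By construction $[\varphi_W] = N_H^G([\varphi_V])$.

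By hypothesis, $\widetilde{\varphi}_V$ is an $H$-equivalence, hence $[\varphi_V]\in \pi_V^H R$ admits a multiplicative inverse $\psi \in \pi_{-V}^H R$ with $\varphi_V \cdot \psi = 1$. Applying $N_H^G$,
\[
N_H^G(\varphi_V)\cdot N_H^G(\psi) \;=\; N_H^G(\varphi_V\cdot\psi) \;=\; N_H^G(1) \;=\; 1,
\]
so $N_H^G(\psi)\in \pi_{-W}^G R$ inverts $[\varphi_W]$. Therefore $\widetilde{\varphi}_W$ is a $G$-equivalence and $R$ is $W$-periodic.

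The only genuine subtlety, and the place I expect to spend the most care writing up, is the identification of $[\varphi_W]$ as the image of $[\varphi_V]$ under a well-defined multiplicative internal norm on $RO(-)$-graded homotopy, and hence verifying that multiplicativity of the norm indeed implies the invertibility chain above. Everything else (the symmetric monoidality of $N_H^G$, the identification $N_H^G(S^V)\simeq S^{\mathrm{Ind}_H^G V}$, and the reduction from $V$-periodicity to invertibility of a class) is standard, so the proof ultimately comes down to a clean bookkeeping of how norms on $G$-commutative ring spectra interact with inverses of periodicity classes.
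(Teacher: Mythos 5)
Your proof is correct but reaches the conclusion by a different mechanism than the paper's. The paper applies the Blumberg--Hill relative norm functor $M \mapsto R \wedge_{N_H^G i_H^* R} N_H^G(M)$, a homotopy functor from $i_H^* R$-modules to $R$-modules, directly to the $i_H^* R$-module equivalence $\widetilde{\varphi}_V$; since homotopy functors preserve equivalences, the output $R \wedge S^W \to R$ is immediately a $G$-equivalence of $R$-modules, and \cref{rem:moduleEquivGivesHtpyMap} finishes. You instead build $\varphi_W = \epsilon \circ N_H^G(\varphi_V)$ at the level of homotopy elements and show it is a unit via multiplicativity of the internal norm on $RO(-)$-graded homotopy. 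This is valid, but it makes you responsible for precisely the points you flag as delicate: that the internal norm $\pi_V^H R \to \pi_W^G R$ is well defined and multiplicative, and that it extends to the virtual grading $-V$ so that $N_H^G(\psi)$ lands in $\pi_{-W}^G R$ (the latter follows because the symmetric monoidal functor $N_H^G$ takes the dualizable $S^V$ to $S^W$, hence takes $S^{-V}$ to $S^{-W}$). The paper sidesteps both points by packaging the same symmetric-monoidal input into a module-category functor and invoking functoriality --- that is exactly the convenience the Blumberg--Hill relative norm provides. Both routes yield the same periodicity class; yours makes the unit-chasing explicit, the paper's outsources it to the module category.
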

\begin{proof} 
Since $i^*_H R$ is $V$-periodic, there exists an $H$-equivariant map 
\[\varphi_V: S^V \longrightarrow i_H^*R \]
such that the $i_H^*R$-module map
\[
\widetilde{\varphi}_V \colon  i^*_H R\wedge S^V \rightarrow i^*_H R
\]
is an $H$-equivalence. 

By \cite[Theorem~1.1, Theorem~5.10]{BH20}, there is a homotopy functor $\RN(-)$ from the category of $i^*_H R$-modules to the category of $R$-modules, given by
\[
\RN(M):= R\wedge_{N_H^G i^*_H R} N_H^G(M).
\]
We can apply this functor to the $H$-equivalence $\widetilde{\varphi}_V$ to obtain the $G$-equivalence
\[
\RN(\widetilde{\varphi}_V): R \wedge S^W\rightarrow R.
\]
Here, we have used the fact that 
\[
\RN (i^*_H R\wedge S^V)=  R\wedge_{N_H^G i^*_H R} N_H^G (i^*_H R \wedge S^V)\simeq R \wedge S^W.
\]
Since the $G$-equivalence $\RN(\widetilde{\varphi}_V)$ is an $R$-module map, the discussion in \cref{rem:moduleEquivGivesHtpyMap} shows that $R$ is $W$-periodic. 
\end{proof}

Suppose now that $X$ is an $\mathbb{E}_\infty$-ring spectrum equipped with a $G$-action by $\mathbb{E}_\infty$-ring maps. The cofree completion $F(EG_+, X)$ is then a commutative ring spectrum in $\SpG$, as shown in \cite[Theorem~2.4]{HM17}. For these cofree spectra, one can establish $G$-periodicity by producing a $G$-equivariant map whose underlying nonequivariant map is an equivalence. In practice, as the following lemma shows, such a map can be found by analyzing the homotopy fixed-point spectral sequence (HFPSS).



\begin{lem}\label[lem]{lemma:pctoperiodicity}
Let $G$ be a finite group, $V$ a virtual $G$-representation, and $X$ a cofree $G$-commutative ring spectrum in $\textup{Ho}(\SpG)$. The $G$-spectrum $X$ is $V$-periodic if and only if, on the $\mathcal{E}_2$-page of the $G$-$\mathrm{HFPSS}$ for $X$, there is an invertible class $u$ with bidegree $(V,0)$ that is a permanent cycle. 
\end{lem}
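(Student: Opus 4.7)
The plan is to argue the two implications separately, using the cofree hypothesis to bridge equivariant and underlying equivalences. Throughout, the $G$-HFPSS for $X \simeq F(EG_+, X)$ has signature
\[ E_2^{s, V} = H^s\!\left(G; \pi_V^e X\right) \Longrightarrow \pi_{V-s}^G X, \]
so the filtration $0$ line is $E_2^{0, V} = (\pi_V^e X)^G$, and the edge homomorphism is the underlying restriction $\res \colon \pi_V^G X \to \pi_V^e X$.

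For the forward direction, suppose $\varphi_V \colon S^V \to X$ witnesses $V$-periodicity. Then $\widetilde{\varphi}_V$ is a $G$-equivalence, hence an underlying equivalence, and $u := \res(\varphi_V) \in \pi_V^e X$ is a unit of the underlying homotopy ring. Taking inverses commutes with the $G$-action, so the class $u$ lies in $(\pi_V^e X)^G = E_2^{0, V}$ and is invertible on the $E_2$-page. It is a permanent cycle because it is the edge image of an actual element of $\pi_V^G X$.

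The backward direction is the substantive one. First I would observe that $E_2^{0, V} = E_\infty^{0, V}$ on the permanent-cycle locus, since any $d_r$-boundary into filtration $0$ would originate in negative filtration. Hence an invertible permanent cycle $u \in E_2^{0, V}$ lifts to some $\varphi_V \in \pi_V^G X$. The lift is unique up to higher filtration, and higher-filtration classes restrict to zero underlying, so $\res(\varphi_V) = u$ on the nose in $\pi_V^e X$. The $X$-module map $\widetilde{\varphi}_V \colon X \wedge S^V \to X$ is then multiplication by the unit $u$ on underlying homotopy, and is therefore an underlying equivalence. To promote this to a $G$-equivalence, I would invoke the cofree hypothesis: $X \wedge S^V$ is again cofree since $- \wedge S^V$ is an autoequivalence of $\SpG$ that commutes with $F(EG_+, -)$, and any map between cofree $G$-spectra that is an underlying equivalence is automatically a $G$-equivalence, because $F(EG_+, -)$ sends underlying equivalences to $G$-equivalences and fixes cofree inputs up to $G$-equivalence. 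By \cref{rem:moduleEquivGivesHtpyMap}, the resulting $G$-equivalence already exhibits $V$-periodicity in the sense of \cref{defn:Gperiodicity}.

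I expect the only delicate point to be the edge-homomorphism bookkeeping in the backward direction, namely the claim that the lift of $u$ may be chosen with $\res(\varphi_V) = u$ exactly rather than only up to indeterminacy. This rests on two standard features of the HFPSS used above: the absence of negative filtrations, which makes $E_2^{0,V}$ coincide with $E_\infty^{0,V}$ for permanent cycles, and the fact that positive-filtration classes restrict to zero underlying, which trivializes the lifting indeterminacy after applying $\res$. Everything else is formal, so the real work in the lemma is organizing these two facts and the cofree reduction cleanly.
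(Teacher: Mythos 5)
Your proof is correct and follows essentially the same route as the paper's: the forward direction produces $u = \res(\varphi_V)$ as a $G$-invariant unit detected on the $0$-line by the edge homomorphism, and the backward direction lifts the permanent cycle $u$ to a class $\varphi_V$ whose associated $X$-module map is an underlying equivalence and hence a $G$-equivalence by cofreeness. You spell out two points the paper leaves implicit — that there are no differentials entering filtration $0$ so the lift has restriction exactly $u$, and that $X \wedge S^V$ is again cofree because $S^V$ is dualizable — but these are just the bookkeeping the paper glosses over, not a different argument.
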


\begin{proof}
For the ``if'' part, choose an element $\varphi_V \in \pi_V^G X$ in homotopy that is represented by $u$. Since $u$ is invertible on the $\mathcal{E}_2$-page of the HFPSS, it is also invertible in $(\pi^e_\star X)^G$ (the $0$-line of the $\HFPSS$) and hence invertible in $\pi^e_\star X$. This implies that the $G$-map
\[
\widetilde{\varphi}_V: X\wedge S^V \to X \wedge X \to X
\]
is a nonequivariant equivalence. Since $X$ is cofree, $\widetilde{\varphi}_V$ is a $G$-equivalence.  This shows that $X$ is $V$-periodic.

For the ``only if'' part, if $X$ is $V$-periodic, consider the invertible class $\varphi_V \in \pi_V^G X$ that induces the equivalence in \cref{defn:Gperiodicity}. This is represented by an invertible permanent cycle $u$ on the $\mathcal{E}_2$-page. Note that the restriction of $\varphi_V$ to $\pi^e_V(X)$ lies in the image of the map $(\pi^e_V(X))^G \to \pi^e_V(X)$. This shows that the bidegree of $u$ is $(V, 0)$.
\end{proof}




In practice, we can construct a family of invertible elements on the $\mathcal{E}_2$-page of the $G$-HFPSS for $X$ as follows. Let $V$ be an orientable $G$-representation of dimension $n$ (i.e. it factors through $SO(n)$). The key observation is that for a $G$-spectrum $X$, the homotopy group $\pi_{|V|}^e \Sigma^V X$ is isomorphic to $\pi_0^e X$ as a $G$-module. Therefore, the unit $1 \in \pi_0^e X$ corresponds to an orientation class
\[u_V \in (\pi_{|V|}^e \Sigma^V X)^G \cong H^0(G, \pi_{|V|}^e \Sigma^V X)\]
on the $\mathcal{E}_2$-page of the HFPSS. The orientation class $u_V$ has bidegree $(|V| - V, 0)$. Periodicities arising from the classes $u_V$ are discussed in \cite[Proposition~3.10]{BBHS20}.

In \cite[Section~3]{HHR16}, Hill--Hopkins--Ravenel have also defined orientation classes $u_V \in \pi_{|V|-V}^G H\underline{\mathbb{Z}}$, which lie on the $\mathcal{E}_2$-page of the slice spectral sequence of $\MUG$ and $\BPG$ ($G = \Cn$).  These classes map to the unit $1$ under the restriction map, which is an isomorphism:
\[
H^G_{|V|}(S^V, \underline{\mathbb{Z}}) \stackrel{\cong}{\longrightarrow}  H_{|V|}(S^{|V|}, \mathbb{Z}) \cong \mathbb{Z}.
\]
These orientation classes, for various orientable $G$-representations $V$, play key roles in computing the slice spectral sequences for $\MUG$ and $\BPG$.

When $X = \MUG$ or $\BPG$, the $u_V$ classes in the slice spectral sequence and the homotopy fixed point spectral sequence can be chosen compatibly. This is done by first choosing an orientation class $u_V$ on the $\mathcal{E}_2$-page of the slice spectral sequence. The image of this class under the map $\SliceSS \to \HFPSS$ will then be an orientation class in the homotopy fixed point spectral sequence.


We now focus on $E_h$, the Lubin--Tate spectrum associated with a height-$h$ formal group $\Gamma$ over $k$. By the works of Goerss--Hopkins--Miller and Lurie \cite{GH04, Lur18}, there is a unique $\mathbb{E}_\infty$ $\Gh$-action on $E_h$, where $\Gh$ is the corresponding Morava stabilizer group. From now on, we shall abuse notation and use $E_h$ to denote the $G$-spectrum $F(EG_+, E_h)$, where $G \subset \Gh$ is a finite subgroup.

We will first consider the case when $G$ is a cyclic $2$-subgroups of $\Sh$.  More specifically, when $h = 2^{n-1}m$ and $E_h$ admits a $\Cn$-action.

\begin{prop}\label[prop]{corollary:periodicityfromslicediffthm}
At height $h= 2^{n-1}m$, the $C_{2^{n}}$-spectrum $E_h$ is $(2^{h+1} - 2^{h+1} \sigma)$-periodic.
\end{prop}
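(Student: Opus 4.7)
The plan is to apply \cref{lemma:pctoperiodicity} to the cofree $C_{2^n}$-spectrum $E_h = F(EG_+, E_h)$. The proposition reduces to exhibiting an invertible permanent cycle on the $\mathcal{E}_2$-page of the $C_{2^n}$-HFPSS for $E_h$ in bidegree $(2^{h+1}-2^{h+1}\sigma,\, 0)$. The natural candidate is the orientation class $u_{2^{h+1}\sigma}$, associated to the orientable representation $2^{h+1}\sigma$; this class is invertible on the $\mathcal{E}_2$-page by construction, since under the restriction $(\pi_{|V|}^e \Sigma^V E_h)^G \to \pi_{|V|}^e\Sigma^V E_h$ it corresponds to the unit of $\pi_0^e E_h$.

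It therefore remains to verify that $u_{2^{h+1}\sigma}$ is a permanent cycle. I would first lift the problem to the $C_{2^n}$-slice spectral sequence for $E_h$, which receives a map from the slice SS for $\BPG$ via the Real orientation of \cite{HS20} and maps to the HFPSS via the usual comparison; permanent cycles are preserved by this comparison, so it suffices to verify permanence in the slice SS. Writing $u_{2^{h+1}\sigma} = (u_{2^h\sigma})^2$ and applying the Hill--Hopkins--Ravenel slice differentials theorem, the only differential on $u_{2^{h+1}\sigma}$ possible through page $2^{h+1}-1$ is $d_{2^{h+1}-1}$. By the Leibniz rule,
\[
d_{2^{h+1}-1}(u_{2^{h+1}\sigma}) = 2\, u_{2^h\sigma}\, d_{2^{h+1}-1}(u_{2^h\sigma}),
\]
and since $d_{2^{h+1}-1}(u_{2^h\sigma})$ is (up to a unit) a multiple of $a_\sigma^{2^{h+1}-1}\bar{v}_h$ with $2 a_\sigma = 0$, this expression vanishes. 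For pages $r > 2^{h+1}-1$, the strong vanishing line theorem of \cite{DLS2022} (which applies to $E_h$ after $\bar v_h$ has been inverted) places the target bidegree of any potential $d_r$ strictly above the vanishing line, so all such differentials must vanish.

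The main obstacle is ruling out differentials beyond the HHR range; the calculation killing $d_{2^{h+1}-1}$ is a direct application of the Leibniz rule together with $2 a_\sigma = 0$, but excluding potentially sporadic higher differentials on $u_{2^{h+1}\sigma}$ genuinely relies on the sharp vanishing line of \cite{DLS2022}. Once permanence is established, \cref{lemma:pctoperiodicity} delivers the claimed $(2^{h+1} - 2^{h+1}\sigma)$-periodicity.
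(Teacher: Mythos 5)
Your overall strategy---work in the slice spectral sequence, invoke the Hill--Hopkins--Ravenel Slice Differentials Theorem, and then use the strong vanishing line of \cite{DLS2022}---is the same as the paper's. However, your execution has a genuine gap arising from using $C_2$-slice page numbers in a $C_{2^n}$-slice spectral sequence.

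In the $C_{2^n}$-slice spectral sequence, the HHR Slice Differentials Theorem \cite[Theorem~9.9]{HHR16} says the first potential differential on $u_{2\sigma}^{2^{k}}$ occurs at page $(2^{k+1}-1)\cdot 2^n + 1$, not at page $2^{k+2}-1$ as in the $C_2$ case. You write ``the only differential on $u_{2^{h+1}\sigma}$ possible through page $2^{h+1}-1$ is $d_{2^{h+1}-1}$,'' and then claim that the vanishing line handles everything beyond page $2^{h+1}-1$. But the vanishing-line filtration from \cite[Theorem~6.1]{DLS2022} is $2^{h+n}-2^n+1$, which for $n\ge 2$ is strictly greater than $2^{h+1}-1$; the entire range $2^{h+1}-1 < r < 2^{h+n}-2^n+1$ is left unaddressed. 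So as written the argument only closes for $n=1$.

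The Leibniz-rule step is also not needed, and applying it correctly would require the $C_{2^n}$ page numbers anyway: HHR's theorem already makes $u_{2\sigma}^{2^h}$ a $d_i$-cycle for all $i\le (2^{h+1}-1)2^n$, and since $(2^{h+1}-1)2^n \ge 2^{h+n}-2^n+1 = (2^h-1)2^n + 1$, this puts the class past the vanishing line directly, with no factorization into $(u_{2^h\sigma})^2$. The paper's proof does exactly this. If you want to keep the Leibniz shortcut, the correct cutoff page is $(2^h-1)2^n+1$ (first differential on $u_{2^h\sigma}$ in the $C_{2^n}$-slice SS), which happens to coincide with the vanishing-line filtration, so it again works out---but only after replacing $2^{h+1}-1$ with the right $C_{2^n}$ page number.

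One further minor point: you speak of ``the $C_{2^n}$-slice spectral sequence for $E_h$,'' but $E_h$ is cofree and doesn't carry a useful slice filtration; what you actually want (and what the paper does) is to work in the slice SS for $\MUG$ or $N_{C_2}^{\Cn}(\bar v_h)^{-1}\BPCn$ and push the permanent cycle forward through the zig-zag of comparison maps into the $C_{2^n}$-HFPSS for $E_h$, where \cref{lemma:pctoperiodicity} applies.
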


\begin{proof}
By \cite[Theorem~1.1]{HS20}, there is a $C_{2^{n}}$-equivariant map from 
$N_{C_2}^{C_{2^{n}}}(\bar{v}_h)^{-1}BP^{(\!(C_{2^{n}})\!)}$ to $E_h$, which induces a map between their corresponding $C_{2^{n}}$-HFPSS.  Consider the following maps of spectral sequences 
\[\begin{tikzcd}
\Cn\text{-}\SliceSS\left(MU^{(\!(C_{2^{n}})\!)}\right) \ar[r] & \Cn\text{-}\SliceSS\left(N_{C_2}^{C_{2^{n}}}(\bar{v}_h)^{-1}BP^{(\!(C_{2^{n}})\!)}\right) \ar[d] \\
& \Cn\text{-}\HFPSS \left(N_{C_2}^{C_{2^{n}}}(\bar{v}_h)^{-1}BP^{(\!(C_{2^{n}})\!)}\right) \ar[r] & \Cn\text{-}\HFPSS \left(E_h\right) 
\end{tikzcd}\]
By the Slice Differentials Theorem \cite[Theorem~9.9]{HHR16}, the class $u_{2\sigma}^{2^{h}}$ is a $2^{n} \cdot (2^{h+1}-1)$-cycle in the $\Cn$-$\SliceSS$ for $MU^{(\!(C_{2^{n}})\!)}$.  Therefore, this class is also a $2^{n} \cdot (2^{h+1}-1)$-cycle in the $\Cn$-$\SliceSS$ for $N_{C_2}^{C_{2^{n}}}(\bar{v}_h)^{-1}BP^{(\!(C_{2^{n}})\!)}$, and furthermore in the $\Cn$-$\HFPSS$ for $N_{C_2}^{C_{2^{n}}}(\bar{v}_h)^{-1}BP^{(\!(C_{2^{n}})\!)}$ by naturality. 

By \cite[Theorem~6.1]{DLS2022}, the $\Cn$-HFPSS for $N_{C_2}^{C_{2^{n}}}(\bar{v}_h)^{-1}BP^{(\!(C_{2^{n}})\!)}$ admits a strong vanishing line of filtration $(2^{h+n}-2^{n}+1)$. Since $2^{n} \cdot (2^{h+1}-1) \geq 2^{h+n}-2^{n}+1$, the class $u_{2\sigma}^{2^{h}}$ is a permanent cycle in the $\Cn$-HFPSS for $N_{C_2}^{C_{2^{n}}}(\bar{v}_h)^{-1}BP^{(\!(C_{2^{n}})\!)}$.  Therefore, $u_{2\sigma}^{2^{h}}$ is a permanent cycle in the $C_{2^{n}}$-HFPSS for $E_h$. It follows that the spectrum $E_h$ is $(2^{h+1} - 2^{h+1} \sigma)$-periodic by \cref{lemma:pctoperiodicity}.
\end{proof}

\begin{rem}\rm
The statement and proof of \cref{corollary:periodicityfromslicediffthm} hold for all forms of $E_h$ because both the Real orientation theorem in \cite{HS20} and the strong vanishing line theorem in \cite{DLS2022} apply to all forms of Lubin--Tate theories.
\end{rem}
    
\begin{rem}\rm
The positive integer $m$ in \cref{corollary:periodicityfromslicediffthm} can also be even. For example, when $n = 1$ and $m = 2$, \cref{corollary:periodicityfromslicediffthm} implies that the $C_2$-spectrum $E_2$ is $(8 - 8\sigma)$-periodic.
\end{rem}




\begin{thm}\label[thm]{thm:cyclicperiodicity}
At height $h=2^{n-1} m$, the spectrum $EO_h(\Cn)$ is $2^{h+n+1}$-periodic.  
\end{thm}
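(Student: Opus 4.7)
My plan is to construct a virtual $C_{2^n}$-representation $W\in RO(C_{2^n})$ such that $E_h$ is $W$-periodic and $W$ equals $2^{h+n+1}$ times the trivial representation. Such a $W$-periodicity immediately produces a unit in $\pi^{C_{2^n}}_{2^{h+n+1}}E_h=\pi_{2^{h+n+1}}EO_h(C_{2^n})$, and therefore the desired $2^{h+n+1}$-periodicity of $EO_h(C_{2^n})$. The first ingredient I would use is the $V_\sigma=2^{h+1}(1-\sigma)$-periodicity provided by \cref{corollary:periodicityfromslicediffthm}; this alone is insufficient since $V_\sigma$ has underlying dimension zero and mere integer multiples of it cannot produce an integer-graded representation.

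To cancel the nontrivial summands of $W$, I need two additional families of periodicities. The first is $\rho_{2^n}$-periodicity of $E_h$: the Real orientation $MU_\mathbb{R}\to E_h$ of \cite{HS20} produces a Real Bott unit in $\pi^{C_2}_{\rho_{C_2}}E_h$ after inverting $\bar v_h$, and invoking \cref{lemma:normperiodicity} for $C_2\subset C_{2^n}$, together with the identity $\mathrm{Ind}_{C_2}^{C_{2^n}}\rho_{C_2}=\rho_{2^n}$, upgrades this to $\rho_{2^n}$-periodicity of $E_h$ itself. The second family consists of $\lambda_j$-type periodicities $V_{\lambda_j}=2^{c_j+1}-2^{c_j}\lambda_j$ with $c_j\leq h+j$, one for each $1\leq j\leq n-1$. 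These are proved along the lines of \cref{corollary:periodicityfromslicediffthm}: apply the Slice Differentials Theorem \cite[Theorem 9.9]{HHR16} to the orientation class $u_{\lambda_j}^{2^{c_j}}$ in the $C_{2^n}$-slice spectral sequence of $N_{C_2}^{C_{2^n}}(\bar v_h)^{-1}\BPCn$, and then invoke the strong vanishing line of \cite{DLS2022} to conclude that $u_{\lambda_j}^{2^{c_j}}$ is a permanent cycle in the $C_{2^n}$-HFPSS for $E_h$.

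Armed with these periodicities, I assemble $W$ by a linear-algebra computation in $RO(C_{2^n})$. Starting from $W_0=2^{h+1}\rho_{2^n}$, which has underlying dimension $2^{h+n+1}$ but carries a $\sigma$-summand of coefficient $2^{h+1}$ and $\lambda_j$-summands of coefficients $2^{h+j}$, I add a single copy of $V_\sigma$ to kill the $\sigma$-component and $2^{h+j-c_j}$ copies of $V_{\lambda_j}$ to kill each $\lambda_j$-component; the bound $c_j\leq h+j$ guarantees that these multipliers are non-negative integers. Since every correction has underlying dimension zero, the resulting $W$ still has underlying dimension $2^{h+n+1}$ and now lies purely in the trivial summand of $RO(C_{2^n})$, completing the proof. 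The hard part will be establishing the $\lambda_j$-family of periodicities: for each $j\geq 1$, one must pin down the correct exponent $c_j$ and verify that the length of the first nontrivial slice differential on $u_{\lambda_j}^{2^{c_j}}$ strictly exceeds the strong vanishing line of \cite{DLS2022}, so that the class survives to $E_\infty$.
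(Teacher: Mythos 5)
Your overall plan — assemble a purely integer-graded $W \in RO(C_{2^n})$ of dimension $2^{h+n+1}$ from the $\rho_{2^n}$-, $\sigma$-, and $\lambda_j$-type periodicities — is valid as a strategy; in fact the paper records exactly such a linear combination in the remark following \cref{theorem:ROGperiodicity}. But there is a genuine gap in how you propose to \emph{prove} the $\lambda_j$-periodicities, and the paper's own proof of \cref{thm:cyclicperiodicity} sidesteps $\lambda_j$ entirely by a shorter argument.

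The gap: you claim the periodicities $V_{\lambda_j} = 2^{c_j+1} - 2^{c_j}\lambda_j$ follow ``along the lines of \cref{corollary:periodicityfromslicediffthm}: apply the Slice Differentials Theorem to the orientation class $u_{\lambda_j}^{2^{c_j}}$.'' This doesn't work as stated. The Slice Differentials Theorem \cite[Theorem~9.9]{HHR16} gives the slice differentials on powers of $u_{2\sigma}$; it says nothing about $u_{\lambda_j}$ for $j \geq 1$ at the $C_{2^n}$-level, and neither does \cite[Corollary~9.13]{HHR16} directly. The paper actually establishes the $\lambda_j$-periodicities in \cref{theorem:ROGperiodicity}(1b) via a different and more delicate induction that crucially invokes the transchromatic isomorphism \cite[Theorem~B]{MSZ24} to transport $RO(C_{2^{n-1}})$-periodicities of $E_{h/2}$ into $RO(C_{2^n})$-periodicities of $E_h$, combined with the norm \cref{lemma:normperiodicity}. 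So the ``hard part'' you flag at the end is genuinely hard, and the tool you reach for is the wrong one. (One can alternatively restrict $u_{\lambda_j}$ to the subgroup $C_{2^{n-j}}$, where it becomes a power of $u_{2\sigma_{2^{n-j}}}$, apply \cite[Corollary~9.13]{HHR16} there, and norm back up — but then the norm introduces extra orientation-class factors in the denominator that must themselves be shown to survive, which re-creates the induction. Simply citing the Slice Differentials Theorem at the top group does not suffice.)

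For comparison, the paper's proof of \cref{thm:cyclicperiodicity} is a clean induction on $n$ that never touches $\lambda_j$. The base case $n=1$ is the explicit computation of $E_h^{hC_2}$ in \cite[Theorem~1.4]{HS20}. For the inductive step, the $2^{h+k+1}$-periodicity of $i_{C_{2^k}}^*E_h$ induces (via \cref{lemma:normperiodicity}) to the $C_{2^{k+1}}$-periodicity $2^{h+k+1}(1+\sigma_{2^{k+1}})$; multiplying the $\sigma$-periodicity $2^{h+1}(1-\sigma_{2^{k+1}})$ of \cref{corollary:periodicityfromslicediffthm} by $2^k$ gives $2^{h+k+1}(1-\sigma_{2^{k+1}})$; and the sum of these two is $2^{h+(k+1)+1}$. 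This costs only the $\sigma$-periodicity and the norm lemma, avoids the transchromatic input entirely, and is considerably shorter than any route through the full $RO(C_{2^n})$-lattice.
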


\begin{proof}
We will prove the claim by using induction on $n$, which corresponds to the group $\Cn$ acting on $E_h$.  The base case, when $n = 1$, follows from the complete computation of $EO_h(C_2) = E_h^{hC_2}$ in \cite[Theorem~1.4]{HS20}.  

For the induction step, assume that the claim is true for $k < n$, and for all $h$.  In particular, this means that as a $\Ck$-spectrum, $i_{\Ck}^* E_h$ is $2^{h+k+1}$-periodic.  By \cref{lemma:normperiodicity}, the $\Ckplusone$-spectrum $i_{\Ckplusone}^* E_h$ is 
\[\ind_{C_{2^k}}^{C_{2^{k+1}}}(2^{h+k+1}) = 2^{h+k+1} + 2^{h+k+1} \sigma_{2^{k+1}}\]
periodic.  By \cref{corollary:periodicityfromslicediffthm}, $i_{\Ckplusone}^* E_h$ is also $(2^{h+1} - 2^{h+1} \sigma_{2^{k+1}})$-periodic, and therefore 
\[(2^{h+1} - 2^{h+1} \sigma_{2^{k+1}}) \cdot 2^k= 2^{h+k+1} - 2^{h+k+1}\sigma_{2^{k+1}}\]
periodic.  Combining these two periodicities shows that $i_{\Ckplusone}^* E_h$ is 
\[(2^{h+k+1} + 2^{h+k+1} \sigma_{2^{k+1}}) + (2^{h+k+1} - 2^{h+k+1}\sigma_{2^{k+1}}) = 2^{h+(k+1)+1} \]
periodic.  This completes the induction step. 
\end{proof}

We will now consider the case when $h = 4k + 2$ and $E_h$ admits a $Q_8$-action. To obtain periodicities for the $Q_8$-fixed points of $E_h$, we will first introduce some real representations of $Q_8$. 

Let $\sigma_i$, $\sigma_j$, and $\sigma_k$ denote the three $1$-dimensional representations of $Q_8$ with kernels $C_4\langle i \rangle$, $C_4\langle j \rangle$, and $C_4\langle k \rangle$, respectively.  Here, $C_4\langle i \rangle$, $C_4\langle j \rangle$, and $C_4\langle k \rangle$ are the $C_4$-subgroups generated by $i$, $j$, and $k$, respectively.  We will also denote $\mathbb{H} = \mathbb{R} \oplus \mathbb{R}i \oplus \mathbb{R}j \oplus \mathbb{R}k$ as the quaternion algebra, which is a $4$-dimensional irreducible real representation of $Q_8$, with $Q_8$ acting via left multiplication. We will need the following facts about these representations:

\begin{multicols}{2}
\begin{enumerate} \item $\ind_{C_2}^{Q_8}(1) = 1 + \sigma_i + \sigma_j + \sigma_k$ \item $\ind_{C_2}^{Q_8}(\sigma_2) = \mathbb{H}$ \item $\ind_{C_4\langle i \rangle}^{Q_8}(1) = 1 + \sigma_i$ \item $\ind_{C_4\langle i \rangle}^{Q_8}(\sigma_4) = \sigma_j + \sigma_k$
\item $\ind_{C_4\langle j \rangle}^{Q_8}(1) = 1 + \sigma_j$ \item $\ind_{C_4\langle j \rangle}^{Q_8}(\sigma_4) = \sigma_k + \sigma_i$
\item $\ind_{C_4\langle k \rangle}^{Q_8}(1) = 1 + \sigma_k$ \item $\ind_{C_4\langle k \rangle}^{Q_8}(\sigma_4) = \sigma_i + \sigma_j$
\end{enumerate}
\end{multicols}


\begin{thm}\label[thm]{thm:Q8periodicity}
At height $h=4\ell+2$, the spectrum $EO_h(Q_8)$ is $2^{h+4}$-periodic.
\end{thm}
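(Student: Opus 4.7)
\noindent\emph{Proof plan.} The plan is to mirror the induction step of \cref{thm:cyclicperiodicity}: first deduce integer-graded periodicities of $E_h$ as a spectrum over several cyclic subgroups of $Q_8$, then push these up to $RO(Q_8)$-graded periodicities of the $Q_8$-spectrum $E_h$ via \cref{lemma:normperiodicity}, and finally take an integer linear combination in $RO(Q_8)$ that cancels every non-trivial summand and leaves the trivial integer representation of dimension $2^{h+4}$. Passing to $Q_8$-fixed points will then yield the statement.

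First I would apply \cref{thm:cyclicperiodicity} to the cyclic $2$-subgroups of $Q_8$. Since $h=4\ell+2=2\cdot(2\ell+1)$, writing $h=2^{n-1}m$ with $n=2$ and $m=2\ell+1$ gives that $i^*_{C_4\langle x\rangle} E_h$ is $2^{h+3}$-periodic as a $C_4\langle x\rangle$-spectrum for each $x\in\{i,j,k\}$, while writing $h=2^0\cdot h$ and applying the theorem with $n=1$ gives that $i^*_{C_2} E_h$ is $2^{h+2}$-periodic. (The inductive proof of \cref{thm:cyclicperiodicity} in fact establishes these periodicities at the level of equivariant spectra, not merely at the level of fixed points.) Next I would invoke \cref{lemma:normperiodicity} together with the induction formulas listed just before the present theorem: norming the three $C_4\langle x\rangle$-periodicities yields $Q_8$-periodicities by $2^{h+3}(1+\sigma_x)$ for each $x\in\{i,j,k\}$, and norming the $C_2$-periodicity yields a $Q_8$-periodicity by $2^{h+2}(1+\sigma_i+\sigma_j+\sigma_k)$.

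Finally I would combine these four $RO(Q_8)$-periodicities: summing the three $2^{h+3}(1+\sigma_x)$'s and subtracting twice $2^{h+2}(1+\sigma_i+\sigma_j+\sigma_k)$ cancels every $\sigma$-summand and leaves the trivial integer representation of dimension $2\cdot 2^{h+3}=2^{h+4}$. Since integer linear combinations of periodicities are again periodicities, $E_h$ is $2^{h+4}$-periodic as a $Q_8$-spectrum in the trivial grading, and passing to $Q_8$-fixed points yields the integer periodicity of $EO_h(Q_8)$. I do not expect a serious obstacle: the hard analytic input (Real orientation, strong vanishing lines, and the slice differentials theorem) is already packaged into \cref{thm:cyclicperiodicity} and \cref{lemma:normperiodicity}, so the remaining work is the elementary bookkeeping in $RO(Q_8)$ engineered to produce the cancellation above.
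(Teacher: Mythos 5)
Your proof is correct and reaches the same conclusion, but via a genuinely different (and more economical) route than the paper. The paper's proof of this theorem does not invoke \cref{thm:cyclicperiodicity} at all; instead it re-derives the needed input directly from \cref{corollary:periodicityfromslicediffthm} and \cite[Theorem~1.4]{HS20}, norming up the $RO(C_2)$-periodicity $2^{h+1}(1-\sigma_2)$ and the $C_2$-periodicity $(1+\sigma_2)$, as well as the $RO(C_4)$-periodicity $2^{h+1}(1-\sigma_4)$ from each $C_4\langle x\rangle$; the resulting five $RO(Q_8)$-classes involve both the signs $\sigma_i,\sigma_j,\sigma_k$ and the quaternion representation $\mathbb{H}$, and the final cancellation is engineered so that both the $\sigma$'s and $\mathbb{H}$ drop out. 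By contrast, you treat \cref{thm:cyclicperiodicity} as a black box, feeding in only the \emph{integer} periodicities $2^{h+3}$ over each $C_4\langle x\rangle$ and $2^{h+2}$ over $C_2$; after norming, every representation appearing is $1$-dimensional, $\mathbb{H}$ never enters, and the linear algebra collapses in one line. The one step you flag parenthetically — that the inductive argument for \cref{thm:cyclicperiodicity} actually produces periodicity of the $G$-spectrum $E_h$, not merely of its fixed points — is genuine and correct: the invertible class in $\pi_P E_h^{hG}$ must lie in filtration zero of the HFPSS (its inverse forces this), so \cref{lemma:pctoperiodicity} upgrades the fixed-point periodicity to a $G$-spectrum periodicity; the paper uses exactly this observation in the inductive step of \cref{thm:cyclicperiodicity}, so your reliance on it is consistent. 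The tradeoff is that your proof has a logical dependency on \cref{thm:cyclicperiodicity} (which the paper's proof avoids), while the paper's re-derivation has the side benefit of producing the finer $RO(Q_8)$-periodicities involving $\mathbb{H}$ that are reused in \cref{theorem:ROGperiodicity}.
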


\begin{proof}
By \cref{corollary:periodicityfromslicediffthm}, $E_{h}$ is $2^{h+1}(1-\sigma_2)$-periodic as a $C_2$-spectrum.  Applying \cref{lemma:normperiodicity} shows that $E_h$ has periodicity
\[
\ind_{C_2}^{Q_8}\left(2^{h+1}(1-\sigma_2)\right) = 2^{h+1}(1+\sigma_i+\sigma_j+\sigma_k-\mathbb{H})
\]
as a $Q_8$-spectrum.  

By \cite[Theorem~1.4]{HS20}, $E_{h}$ is $(1+\sigma_2)$-periodic as a $C_2$-spectrum. Applying \cref{lemma:normperiodicity} to this periodicity shows that as a $Q_8$-spectrum, $E_h$ has periodicity 
\[\ind_{C_2}^{Q_8}\left(1+\sigma_2\right)=1 + \sigma_i + \sigma_j + \sigma_k + \mathbb{H} .\]

Consider the $C_4$-subgroup $C_4\langle i \rangle$. By \cref{corollary:periodicityfromslicediffthm} again, $E_{h}$ is $2^{h+1}(1-\sigma_4)$-periodic as a $C_4\langle i \rangle$-spectrum. Applying \cref{lemma:normperiodicity} shows that $E_h$ has periodicity 
\[\ind_{C_4\langle i \rangle}^{Q_8}\left(2^{h+1}(1-\sigma_4)\right) = 2^{h+1}(1+\sigma_i - \sigma_j - \sigma_k)\]
as a $Q_8$-spectrum.  Applying the same argument to the $C_4$-subgroups $C_4\langle j \rangle$ and $C_4 \langle k \rangle$ shows we have the following three $Q_8$-periodicities: 
\begin{enumerate}
    \item $2^{h+1}(1+\sigma_i-\sigma_j-\sigma_k)$;
    \item $2^{h+1}(1-\sigma_i+\sigma_j-\sigma_k)$;
    \item $2^{h+1}(1-\sigma_i-\sigma_j+\sigma_k)$.
\end{enumerate}
The sum of these three periodicities is $2^{h+1}(3 - \sigma_i - \sigma_j - \sigma_k)$.  By combining these periodicities, we obtain the desired integer-graded periodicity for $EO_h(Q_8) = E_h^{hQ_8}$: 
\[2^{h+1}(1+\sigma_i+\sigma_j+\sigma_k-\mathbb{H}) + 2^{h+1}\cdot (1 + \sigma_i + \sigma_j + \sigma_k + \mathbb{H}) + 2 \cdot 2^{h+1}(3 - \sigma_i - \sigma_j - \sigma_k) = 2^{h+4}.\]
\end{proof}

\begin{rem}\rm
The periodicities established in \cref{thm:cyclicperiodicity} and \cref{thm:Q8periodicity} apply to all forms of $E_h$, whereas previously, periodicities were known only for certain specific forms. For instance, when $n = 3$ and $m = 1$, the Periodicity Theorem \cite[Theorem~1.7]{HHR16} of Hill, Hopkins, and Ravenel shows that for a particular form of $E_4$, the $C_8$-fixed points of the detection spectrum $\Omega$ are $256$-periodic.  When $n = 2$ and $m = 2$, Hill, Shi, Wang, and Xu proved that the $C_4$-spectrum $D^{-1}\BPCfour\langle 2\rangle$ is $384$-periodic \cite[Theorem~1.3]{HSWX23}, which implies that the $C_4$-fixed points of a specific form of $E_4$ are $2^{4+2+1} = 128$-periodic.  One can also combine the Hill--Hopkins--Ravenel Periodicity Theorem with the work of Hahn--Shi \cite{HS20} and Beaudry--Hill--Shi--Zeng \cite{BHSZ21} to deduce the periodicity of $E_h^{h\Cn}$ for a specific $\Cn$-equivariant model of $E_h$ (see \cite[Theorem~4.3.5]{Mor24}).
\end{rem}


We will now use \cref{thm:cyclicperiodicity} and \cref{thm:Q8periodicity} to establish periodicities for $EO_h(G)$ when $G$ is any finite subgroup of $\mathbb{G}_h$ (\cref{thm:A}).

\begin{lem}\label[lem]{lem:coprime}
Let $G_0$ be a finite subgroup of $\mathbb{S}_h$, and let $H \subset G_0$ be a Sylow $2$-subgroup with index $[G_0: H] = \ell$. Then $EO_h(H)$ is $P$-periodic if and only if $EO_h(G_0)$ is $(\ell P)$-periodic.
\end{lem}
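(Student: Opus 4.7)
The plan is to exploit that $\ell = [G_0:H]$ is odd (since $H$ is a Sylow $2$-subgroup), so $\ell$ is a unit in the $2$-complete ring $\pi_0 EO_h(G_0)$. Consequently, the restriction-then-transfer composite
\[
EO_h(G_0) \xrightarrow{\mathrm{res}} EO_h(H) \xrightarrow{\mathrm{tr}} EO_h(G_0)
\]
equals multiplication by $\ell$, an equivalence. This exhibits $EO_h(G_0)$ as a retract of $EO_h(H)$, so that restriction $\pi_* EO_h(G_0) \hookrightarrow \pi_* EO_h(H)$ is a split injection of graded rings.

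For the direction $(\Rightarrow)$, suppose $u \in \pi_P EO_h(H)$ is a unit. I would first assume $H \trianglelefteq G_0$ with quotient $K = G_0/H$ of odd order $\ell$. Then $EO_h(G_0) = (EO_h(H))^{hK}$, and since $|K|$ is invertible $2$-locally, the HFPSS computing this fixed-point spectrum collapses, identifying $\pi_* EO_h(G_0) = (\pi_* EO_h(H))^K$. The averaged product
\[
v := \prod_{k \in K} k \cdot u \in \pi_{\ell P} EO_h(H)
\]
is $K$-invariant by a permutation-of-factors argument and is a product of units, hence a unit. It therefore defines a unit in $\pi_{\ell P} EO_h(G_0)$, yielding $\ell P$-periodicity. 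If $H$ is not normal in $G_0$, I would reduce to the normal case by invoking the Hill--Hopkins--Ravenel multiplicative norm $N_H^{G_0}(u) \in \pi^{G_0}_{\mathrm{Ind}_H^{G_0}(P)} E_h$, which is a unit because its restriction to the trivial subgroup is a product of conjugates of $u$; the equivariant retract structure then recovers an integer-graded unit.

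For the direction $(\Leftarrow)$, suppose $v \in \pi_{\ell P} EO_h(G_0)$ is a unit. Restriction sends $v$ to a unit $\mathrm{res}(v) \in \pi_{\ell P} EO_h(H)$, so $EO_h(H)$ is $(\ell P)$-periodic and its minimal period divides $\ell P$. By \cref{thm:cyclicperiodicity} and \cref{thm:Q8periodicity}, $EO_h(H)$ already admits the periodicity $P(h,H)$, which is a power of $2$. Hence the minimal period of $EO_h(H)$ is itself a $2$-power, and therefore coprime to the odd integer $\ell$. The divisibility ``minimal period $\mid \ell P$'' together with coprimality forces the minimal period to divide $P$, so $EO_h(H)$ is $P$-periodic.

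The main obstacle I anticipate is the non-normal case in $(\Rightarrow)$: the Weyl action of $G_0/H$ is not a priori defined on $EO_h(H)$, so the averaging argument does not literally apply. The plan to resolve this is the Hill--Hopkins--Ravenel norm route outlined above; the subtlety is converting the $RO(G_0)$-graded unit $N_H^{G_0}(u) \in \pi^{G_0}_{\mathrm{Ind}_H^{G_0}(P)} E_h$ into an integer-graded unit, which requires controlling the reduced permutation representation $\widetilde{\mathbb{R}[G_0/H]}$ in $E_h$.
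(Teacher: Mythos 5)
Your $(\Leftarrow)$ direction is correct and coincides with the paper's proof: restrict the $(\ell P)$-fold unit to $\pi_{\ell P}EO_h(H)$, then combine the resulting $(\ell P)$-periodicity with the $2$-power periodicity of $EO_h(H)$ from \cref{thm:cyclicperiodicity} and \cref{thm:Q8periodicity} and the fact that $\ell$ is odd to force the minimal period to divide $P$.

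Your $(\Rightarrow)$ direction in the normal case is also correct, and follows a slightly different route than the paper. You pass to the iterated homotopy fixed points $EO_h(G_0) \simeq EO_h(H)^{hK}$ with $K = G_0/H$ and observe that the resulting $K$-HFPSS collapses because $|K| = \ell$ is a unit in the $2$-local coefficients, so $\pi_*EO_h(G_0) \cong (\pi_*EO_h(H))^K$; the averaged product $\prod_{k\in K} k\cdot u$ is then a $K$-invariant unit. The paper instead works directly on the $\mathcal{E}_2$-page of the $G_0$-HFPSS of $E_h$: it forms the same $G_0$-invariant product $\prod_{\bar g\in G_0/H}\bar g u$, shows it is a permanent cycle by chasing it through $\res$ followed by $\tr$ (multiplication by the unit $\ell$), and then appeals to \cref{lemma:pctoperiodicity}. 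Both realize the same coset-averaging idea; your version via the collapsing Lyndon--Hochschild--Serre-type HFPSS is arguably cleaner.

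The genuine gap is the one you flag yourself: you never establish that $H$ is normal in $G_0$, and your whole $(\Rightarrow)$ argument requires it (without normality, $G_0/H$ is not a group and $EO_h(H)^{hK}$ is not defined). Your proposed fallback via the norm $N_H^{G_0}(u)$ runs into exactly the obstruction you name --- the norm lives in $RO(G_0)$-degree $P\cdot\mathbb{R}[G_0/H]$, not in integer degree $\ell P$, and converting it would require an invertible orientation class for $P$ copies of the reduced permutation representation, which is not given. The paper closes the gap by citing a classification fact: for any finite subgroup $G_0 \subset \mathbb{S}_h$ at $p = 2$, its Sylow $2$-subgroup $H$ is normal in $G_0$ (see \cite[Table~3.14]{BB20}, which rests on the classifications in \cite{Hew95,Hew99,Buj2012}). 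With that input your normal-case argument is complete and the non-normal case never arises; without it, your proof of $(\Rightarrow)$ does not go through.
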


\begin{proof}
For the ``if'' part, if $E_h^{hG_0}$ is $(\ell P)$-periodic, then the $G_0$-spectrum $E_h$ is also $(\ell P)$-periodic. Therefore, its restriction $i_H^* E_h$ is $(\ell P)$-periodic, which implies that $E_h^{hH}$ is $(\ell P)$-periodic. By \cref{thm:cyclicperiodicity} and \cref{thm:Q8periodicity}, $E_h^{hH}$ always admits a periodicity of some power of $2$. Since $\ell$ is coprime to $2$, this forces $E_h^{hH}$ to be $P$-periodic.

For the ``only if'' part, if $E_h^{hH}$ is $P$-periodic, then by \cref{lemma:pctoperiodicity}, there exists an invertible permanent cycle $u$ with bidegree $(P, 0)$ in the $H$-HFPSS for $E_h$. Recall from \cite[Table~3.14]{BB20} that $H$ is normal in $G_0$. In particular, the quotient group $G_0/H$ acts on $(\pi_*^e E_h)^H=H^0(H, \pi_*^eE_h)$.  Consider the product $\prod_{\bar{g} \in G_0/H} \bar{g}u$. Since $u$ is an invertible permanent cycle, each $\bar{g}u$ is also an invertible permanent cycle, so their product remains an invertible permanent cycle on the $\mathcal{E}_2$-page of the $H$-HFPSS for $E_h$. Furthermore, it is also an invertible element in $(\pi_*^e E_h)^{G_0} = H^0(G_0, \pi_*^e E_h)$, the 0-line of the $\mathcal{E}_2$-page of the $G_0$-HFPSS for $E_h$. 

Now, consider the composition
\[\begin{tikzcd}
G_0\text{-}\HFPSS(E_h) \ar[r, "res"] &H\text{-}\HFPSS(E_h) \ar[r, "tr"]& G_0\text{-}\HFPSS(E_h),
\end{tikzcd}\]
which is multiplication by $|G_0/H| = \ell$. Since the restriction map sends $\prod_{\bar{g} \in G_0/H} \bar{g}u$ to $\prod_{\bar{g} \in G_0/H} \bar{g}u$, the transfer map must send $\prod_{\bar{g} \in G_0/H} \bar{g}u$ to $\ell \cdot \prod_{\bar{g} \in G_0/H} \bar{g}u$.  Since $\ell$ is coprime to 2 and we are working 2-locally, $\ell \cdot \prod_{\bar{g} \in G_0/H} \bar{g}u$ is an invertible permanent cycle on the $\mathcal{E}_2$-page of the $G_0$-HFPSS for $E_h$, with bidegree $(\ell P, 0)$.  By \cref{lemma:pctoperiodicity}, this establishes the desired $(\ell P)$-periodicity for $E_h^{G_0}$.
\end{proof}

The following lemma shows that taking fixed points with respect to the Galois group does not affect the periodicity.

\begin{lem}\label[lem]{lem:Galois}
Let $G \subset \mathbb{G}_h$ be a finite subgroup, and let $G_0 = G \cap \mathbb{S}_h$. Then $EO_h(G)$ and $EO_h(G_0)$ have the same periodicity.      
\end{lem}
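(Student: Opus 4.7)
\medskip

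\noindent\textbf{Proof plan for \cref{lem:Galois}.}
The statement ``same periodicity'' unpacks into two implications, and I would handle them asymmetrically. The ``only if'' direction is immediate from general principles: the inclusion $G_0 \subset G$ induces a ring map $E_h^{hG} \to E_h^{hG_0}$. If $\varphi \in \pi_P E_h^{hG}$ is an invertible class realizing $P$-periodicity of $E_h^{hG}$, then its image in $\pi_P E_h^{hG_0}$ is again invertible, since ring maps preserve units. By \cref{lemma:pctoperiodicity} (or directly), this establishes $P$-periodicity of $E_h^{hG_0}$.

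The content of the lemma therefore lies in the ``if'' direction: given $P$-periodicity of $E_h^{hG_0}$, produce a $P$-periodicity for $E_h^{hG}$. My plan is to use the Lyndon--Hochschild--Serre spectral sequence
\[
H^s\bigl(G/G_0,\, H^t(G_0, \pi_* E_h)\bigr) \Longrightarrow H^{s+t}(G, \pi_* E_h),
\]
associated to $1 \to G_0 \to G \to G/G_0 \to 1$, which identifies $H^0(G, \pi_* E_h)$ with the $G/G_0$-fixed points of $H^0(G_0, \pi_* E_h)$. By \cref{lemma:pctoperiodicity}, the $P$-periodicity of $E_h^{hG_0}$ gives an invertible permanent cycle $u$ in bidegree $(P,0)$ of the $G_0$-$\HFPSS$ of $E_h$. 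The key claim is that $u$ can be chosen $G/G_0$-invariant: the explicit periodicity classes constructed in \cref{thm:cyclicperiodicity} and \cref{thm:Q8periodicity} are built from orientation classes $u_V$ (for $V$ the canonical representations listed in \cref{subsec:Notations}), each characterized by its nonequivariant restriction to the unit $1 \in \pi_0 E_h$. Since the Galois action fixes the unit class and sends these canonical representations to themselves, these orientation classes, and hence $u$, are Galois-invariant. Thus $u$ lifts to a class in $H^0(G, \pi_P E_h)$, which must be invertible because its image in $H^0(G_0, \pi_P E_h)$ is. Finally, I would invoke the strong vanishing line for the $G$-$\HFPSS$ from \cite{DLS2022} to argue that any potential higher differential on the lift lands in a trivial region, making the lift a permanent cycle in the $G$-$\HFPSS$ and yielding the desired $P$-periodicity of $E_h^{hG}$.

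The main obstacle is rigorously verifying Galois-invariance of the specific periodicity class. The periodicity classes in \cref{thm:cyclicperiodicity} and \cref{thm:Q8periodicity} are assembled by the inductive machinery of \cref{lemma:normperiodicity}, involving norms, restrictions, and combinations across various subgroups; tracking the Galois action through each step requires care. As a backup, I would fall back on a product-over-orbits argument in the style of \cref{lem:coprime}: the class $\prod_{\bar{g}\in G/G_0}\bar{g}\cdot u$ is manifestly $G/G_0$-invariant and invertible, giving (at worst) $|G/G_0| P$-periodicity, and one can then combine this with the explicit 2-primary structure of $P(h,G)$ and the cyclicity of the ambient Galois group $\Gal(k/\mathbb{F}_p)$ to sharpen the bound back to $P$.
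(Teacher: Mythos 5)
Your ``only if'' direction (restricting along $G_0 \subset G$) is fine, and matches what the paper would give in that direction. The real content is the ``if'' direction, and here your plan has a genuine gap. You correctly identify $H^0(G, \pi_P^e E_h) \cong H^0(G_0, \pi_P^e E_h)^{G/G_0}$ via the LHS spectral sequence, and your heuristic that the explicit periodicity class $u$ is built from orientation classes and is therefore Galois-invariant is plausible. But the step you gloss over --- concluding that the lift $u' \in H^0(G, \pi_P^e E_h)$ is a permanent cycle in the $G$-$\HFPSS$ --- does not follow from the vanishing line alone. The vanishing line tells you that a class which is a $d_N$-cycle for $N$ past the vanishing filtration is a permanent cycle, but you have no \emph{a priori} bound on the length of the first potential differential on $u'$: knowing that $\mathrm{res}(u') = u$ is a permanent cycle in the $G_0$-$\HFPSS$ only tells you that $d_r^G(u')$ lies in the kernel of the restriction $H^*(G,-) \to H^*(G_0,-)$, and this kernel need not vanish --- the index $|G/G_0|$ is a divisor of $|\Gal(k/\mathbb{F}_2)|$ and is \emph{not} in general coprime to $2$, so the usual transfer-restriction argument does not give injectivity.

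The paper closes exactly this gap with a different tool: the Galois-descent result of Beaudry--Goerss (\cite[Lemma~1.32, 1.37, Rem.~1.39]{BG18}, see also \cite[Lemma~2.2.7]{BGH2022}), which says that the $G_0$-$\HFPSS$ is obtained from the $G$-$\HFPSS$ by (faithfully flat) base change along $\WW(k^{G/G_0}) \to \WW(k)$, with differentials the $\WW(k)$-linear extensions. This simultaneously handles (i) the lift problem, since $H^*(G)$ sits inside $H^*(G_0)$ by faithful flatness, and (ii) the permanent-cycle problem, since a nonzero differential on $u'$ in the $G$-$\HFPSS$ would base-change to a nonzero differential on $u$ in the $G_0$-$\HFPSS$. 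Your backup plan (taking the product $\prod_{\bar g} \bar g \cdot u$) only gives $|G/G_0| P$-periodicity, and the proposed ``sharpening'' step is not worked out and would in any case still run into the same injectivity issue. So the missing ingredient is precisely the base-change statement; without it the argument does not close.
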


\begin{proof}
First note that the quotient group $G/G_0$ can be identified as a subgroup of the Galois group $\Gal(k/\mathbb{F}_2)$ through the inclusion $G \rightarrow \mathbb{G}_h$. By \cite[Lemma 1.32, Lemma 1.37, Remark 1.39]{BG18}, the $G$-HFPSS for $E_h$ is a base change
of the $G_0$-HFPSS for $E_h$ (see also \cite[Lemma~2.2.7]{BGH2022}). More precisely, let $k'=k^{G/G_0}$.  Then there is an isomorphism 
\[\WW(k) \otimes_{\WW(k')} H^*(G, \pi_*^e E_h) \stackrel{\cong}{\longrightarrow} H^*(G_0, \pi_*^e E_h)\]
of the corresponding $\mathcal{E}_2$-pages, and all the differentials in the $G_0$-HFPSS for $E_h$ are the $\mathbb{W}(k)$-linear extensions of those in the $G$-HFPSS for $E_h$. Therefore, if $u$ is an invertible permanent cycle in $G$-HFPSS that produces a periodicity for $E_h^{hG}$, then it also corresponds to an invertible permanent cycle in the $G_0$-HFPSS that produces the same periodicity for $E_h^{hG_0}$.  The converse also holds.

\end{proof}

\begin{proof}[Proof of \cref{thm:A}]
By \cref{lem:Galois}, $EO_h(G)$ and $EO_h(G_0)$ have the same periodicity.  Let $H$ be a 2-Sylow subgroup of $G_0$ with index $\ell$.  By \cref{thm:cyclicperiodicity} and \cref{thm:Q8periodicity}, $EO_h(H)$ is $P(h, H)$-periodic, where $P(h, H)$ is defined as in \cref{defn:PhG}.  The conclusion now follows by applying \cref{lem:coprime}, which shows that $EO_h(G_0)$ is $(\ell P(h, H))$-periodic.  
\end{proof}

\begin{rem}\rm \label[rem]{rem:PreviousResults}
The $P(h, G)$-periodicities for $EO_h(G)$ stated in \cref{thm:A} are sharp in all previously known cases where explicit computations have determined the exact periodicities of $EO_h(G)$:
\vspace{-0.15in}
{\begin{multicols}{2}
\begin{itemize}
\item $EO_1(C_2)$: 8 (\cite{Bott59, Atiyah66KR})
\item $EO_2(C_2)$: 16 (\cite{MahowaldRezk09}) 
\item $EO_2(C_4)$: 32 (\cite{HHR16, BO16}))
\item $EO_2(C_6)$: 48 (\cite{MahowaldRezk09})
\item $EO_2(Q_8)$: 64 (\cite{DFHH14, DKLLW24})
\item $EO_2(G_{24})$: 192 (\cite{Bau08,DFHH14})
\columnbreak
\item $EO_2(G_{48})$: 192 (\cite{Bau08,DFHH14})
\item $EO_4(C_2)$: 64 (\cite{HS20, LSWX2019})
\item $EO_4(C_4)$: 128 (\cite{HSWX23})
\item $EO_4(C_8)$: 256 (\cite{HHR16,LWX24})
\item $EO_4(C_{12})$: 384 (\cite{HSWX23})
\item $EO_h(C_2)$: $2^{h+2}$ (\cite{HS20, LSWX2019})
\end{itemize}
\end{multicols}}
\end{rem}

\section{The \texorpdfstring{$RO(G)$}{text}-periodicities of \texorpdfstring{$E_h$}{text}}\label{sec:lattice}

For $p = 2$ and $h \geq 1$, let $E_h = E(\bar{\mathbb{F}}_p, F)$ be the Lubin--Tate theory associated with a chosen height-$h$ formal group $F$ over $\bar{\mathbb{F}}_p$.  In this section, we will analyze the $RO(G)$-periodicities of $E_h$ and prove \cref{thm:B} (\cref{theorem:ROGperiodicity}) and \cref{thm:C} (\cref{thm:finiteComplexity}).  By \cref{lem:Galois}, it suffices to restrict our attention to the situation when $G$ is a finite subgroup of $\mathbb{S}_h := \Aut_{\bar{\mathbb{F}}_p}(F)$.  

\begin{defn}\rm\label[defn]{defn:lattice}
For $h \geq 1$ and $G \subset \mathbb{S}_h$ a finite subgroup, the \textit{$RO(G)$-periodicity lattice} $\La_{h,G}$ of $E_h$ is the free abelian subgroup of $RO(G)$ containing all $V$ such that $E_h$ is $V$-periodic.
\end{defn}

The periodicity lattice $\La_{h, G}$ encodes the complexity of the $RO(G)$-graded homotopy groups $\pi_\star^G E_h$.  More specifically, to compute all the $RO(G)$-graded homotopy groups $\pi^G_\star E_h$, it suffices to compute the homotopy groups of the grading in $RO(G)/\La_{h, G}$.  For example, $\La_{1, C_2}=\mathbb{Z}\langle 1+\sigma, 4-4\sigma \rangle$, and $RO(C_2)/\La_{1, C_2} = \mathbb{Z}/8$.  This is the complexity of $\pi_\star^{C_2}E_1$, as the computation of the grading in $\mathbb{Z}/8$ completely determines all the $RO(C_2)$-graded homotopy groups of $E_1$. 



\begin{rem}\rm
The $RO(G)$-periodicity lattice $\La_{h,G}$ is closely related to $K(h)$-local Picard groups. There is a group homomorphism $J: RO(G) \rightarrow \Pic_G(E_h)$ defined by sending a virtual $G$-representation $V$ to $\Sigma^V E_h$ \cite[Section~3]{BBHS20}. Here, $\Pic_G(E_h)$ is the Picard group of the category of all $K(h)$-local $E_h$-modules in $\text{Sp}^G$. The kernel of $J$ is precisely the lattice $\La_{h,G}$.
\end{rem}

The following proposition shows that $\La_{h,G}$  depends only on the pair $(h, G)$ and is independent of the choice of $E_h$ and the inclusion $G \hookrightarrow \Sh$. 

\begin{prop}\label[prop]{prop:modelfree}
Let $E(k, \Gamma)$ be the Lubin–Tate theory associated with a height-$h$ formal group $\Gamma$ over a finite field $k$ of characteristic $p$, and $G$ a finite subgroup of $\Aut_k(\Gamma)$.  For any $V \in RO(G)$, the $G$-spectrum $E(k, \Gamma)$ is $V$-periodic if and only if $V \in \La_{h, G}$. 
\end{prop}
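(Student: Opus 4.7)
The plan is to compare $E(k, \Gamma)$ with the reference spectrum $E_h = E(\bar{\mathbb{F}}_p, F)$ via a $G$-equivariant $\mathbb{E}_\infty$-ring map, then apply the HFPSS characterization in \cref{lemma:pctoperiodicity}. As a preliminary, I would observe that $\La_{h, G}$ is independent of the choice of $F$ and of the embedding $G \hookrightarrow \Sh$: by Lazard any two height-$h$ formal groups over $\bar{\mathbb{F}}_p$ are isomorphic, and by Goerss--Hopkins--Miller such an isomorphism lifts uniquely to a $G$-equivariant $\mathbb{E}_\infty$-equivalence of Lubin--Tate theories. Then, given $(k, \Gamma)$ and an isomorphism $\Gamma \otimes_k \bar{\mathbb{F}}_p \cong F$, there is a canonical $G$-equivariant $\mathbb{E}_\infty$-ring map $f : E(k, \Gamma) \to E_h$ presenting $E_h$ as a Galois extension of $E(k, \Gamma)$ with Galois group $\Gal(\bar{\mathbb{F}}_p/k)$, which commutes with the $G$-action since $G \subset \Aut_k(\Gamma)$.

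The forward direction is formal. If $E(k,\Gamma)$ is $V$-periodic, witnessed by a $G$-equivalence $\widetilde{\varphi} : E(k, \Gamma) \wedge S^V \to E(k, \Gamma)$, then base-changing $\widetilde{\varphi}$ along $f$ (smashing with $E_h$ over $E(k,\Gamma)$) produces a $G$-equivalence $E_h \wedge S^V \to E_h$, so $V \in \La_{h, G}$.

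For the reverse direction, assume $V \in \La_{h, G}$. By \cref{lemma:pctoperiodicity}, the $G$-HFPSS for $E_h$ carries an invertible permanent cycle $u$ of bidegree $(V, 0)$. By the base-change description used in \cref{lem:Galois}, this $\mathcal{E}_2$-page is the faithfully flat (completed) extension of the $G$-HFPSS for $E(k, \Gamma)$ along $\WW(k) \to \WW(\bar{\mathbb{F}}_p)$, and all differentials are obtained by $\WW(\bar{\mathbb{F}}_p)$-linear extension of the downstairs differentials. I would then descend $u$: the bidegree-$(V, 0)$ piece of the $\mathcal{E}_2$-page for $E(k, \Gamma)$ is a projective rank-one module over the complete local ring $\pi_0 E(k, \Gamma)^{hG}$ (because after the faithfully flat base change to $\pi_0 E_h^{hG}$ it contains a unit), hence free of rank one by locality. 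A generator is automatically a permanent cycle, since its image upstairs is $u$ and differentials descend through the faithfully flat extension, and thus by \cref{lemma:pctoperiodicity} witnesses the $V$-periodicity of $E(k, \Gamma)$.

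The main obstacle lies in executing this descent rigorously: one needs a Hilbert 90-type argument to produce a $\Gal(\bar{\mathbb{F}}_p/k)$-invariant invertible class from $u$, equivalently, to identify the downstairs $\mathcal{E}_2$-term at bidegree $(V, 0)$ as a free rank-one module over the complete local ring $\pi_0 E(k, \Gamma)^{hG}$. The forward direction is a routine base change, but the reverse direction genuinely requires Galois-descent control on Picard-type classes in the HFPSS through the faithfully flat Galois extension $E(k, \Gamma) \to E_h$.
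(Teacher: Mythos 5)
Your overall route agrees with the paper's: both exploit the presentation of $E_h = E(\bar{\mathbb{F}}_p, F)$ as a Galois extension of $E(k, \Gamma)$ with Galois group $\Gal(\bar{\mathbb{F}}_p/k)$, and both reduce the periodicity comparison to the $\WW(\bar{\mathbb{F}}_p)$-base-change description of the homotopy fixed point spectral sequence (the content of \cref{lem:Galois}). The forward direction (base change of a periodicity equivalence) is formal, and the reverse direction requires showing that an invertible permanent cycle upstairs comes from one downstairs. The paper's proof packages this slightly differently, first rewriting $E(k, \Gamma) \simeq E(\bar{\mathbb{F}}_p, \Gamma)^{h\Gal(\bar{\mathbb{F}}_p/k)}$ as a $G$-equivalence and then citing \cref{lem:Galois}, but the descent step is the same.

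There is, however, a concrete gap in your preliminary observation. You assert that $\La_{h, G}$ is independent of both the choice of reference formal group $F$ and the embedding $G \hookrightarrow \Sh$, but your justification only covers the former. Given $G \subset \Aut_k(\Gamma)$ and an isomorphism $\Gamma \otimes_k \bar{\mathbb{F}}_p \cong F$ over $\bar{\mathbb{F}}_p$, Goerss--Hopkins--Miller does upgrade this to an $\mathbb{E}_\infty$-equivalence $E(\bar{\mathbb{F}}_p, \Gamma) \simeq E(\bar{\mathbb{F}}_p, F)$, but this equivalence transports the inclusion $G \hookrightarrow \Aut_{\bar{\mathbb{F}}_p}(\Gamma)$ to some embedding $G \hookrightarrow \Aut_{\bar{\mathbb{F}}_p}(F) = \Sh$ that has no a priori reason to match the embedding defining $\La_{h, G}$. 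The paper closes this by invoking the fact (attributed to Bujard, \cite[Corollary~1.30]{Buj2012}) that abstractly isomorphic finite subgroups of $\mathbb{S}_h$ are conjugate, so that conjugating produces a genuine $G$-equivalence. Without this, you only get an isomorphism of $RO(G)$-lattices after twisting by an outer automorphism, which is not what the statement requires. A secondary, smaller issue: in your descent sketch the module structure at bidegree $(V,0)$ on the $\mathcal{E}_2$-page is over $H^0(G, \pi_0^e E(k, \Gamma))$, not $\pi_0 E(k, \Gamma)^{hG}$ (the latter is the abutment, not the $\mathcal{E}_2$-term); this does not derail the argument, but the ``Hilbert 90'' step you flag as the main obstacle is precisely what the paper disposes of via the $\WW$-linear base-change description in \cref{lem:Galois}, applied after first passing through the Galois-descent equivalence.
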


\begin{proof}
Since $G \subset \Aut_k(\Gamma) \subset \Aut_{\bar{\mathbb{F}}_p}(\Gamma)$, the field extension $k \rightarrow \bar{\mathbb{F}}_p$ induces a $G$-equivariant map $E(k, \Gamma) \rightarrow E(\bar{\mathbb{F}}_p, \Gamma)$.  Furthermore, since $G$ is a subgroup of $\Aut_k(\Gamma)$, the Galois group $\Gal(\bar{\mathbb{F}}_p/k)$ acts trivially on $G$ in $\mathbb{G}_h = \Aut_{\bar{\mathbb{F}}_p}(\Gamma) \rtimes \Gal(\bar{\mathbb{F}}_p/k)$, and $G \times \Gal(\bar{\mathbb{F}}_p/k)$ is a subgroup of $\mathbb{G}_h$.  Taking the homotopy fixed points with respect to $\Gal(\bar{\mathbb{F}}_p/k)$ shows that there is a $G$-equivalence $E(\bar{\mathbb{F}}_p, \Gamma)^{h\Gal(\bar{\mathbb{F}}_p/k)} \simeq E(k, \Gamma)$.  Therefore, $E(k, \Gamma)$ and $E(\bar{\mathbb{F}}_p, \Gamma)^{h\Gal(\bar{\mathbb{F}}_p/k)}$ have the same $RO(G)$-periodicity. 

By \cref{lem:Galois}, we also know that $E(\bar{\mathbb{F}}_p, \Gamma)^{h\Gal(\bar{\mathbb{F}}_p/k)}$ and $E(\bar{\mathbb{F}}_p, \Gamma)$ have the same $RO(G)$-periodicity. Since formal groups with the same height are isomorphic over $\bar{\mathbb{F}}_p$, $\Gamma$ and $F$ are isomorphic, and an isomorphism between them induces a $G$-equivalence $E(\bar{\mathbb{F}}_p, \Gamma) \simeq E(\bar{\mathbb{F}}_p, F) = E_h$.  Here, the $G$-action on $E(\bar{\mathbb{F}}_p, \Gamma)$ comes from the inclusion $i_\Gamma: G \hookrightarrow \Aut_k(\Gamma) \hookrightarrow \Aut_{\bar{\mathbb{F}}_p}(\Gamma) = \mathbb{S}_h$, and may differ from the chosen injection $i_F: G \hookrightarrow \mathbb{S}_h$.  However, since two isomorphic subgroups of $\mathbb{S}_h$ are conjugate to each other in $\mathbb{S}_h$ \cite[Corollary~1.30]{Buj2012}, we indeed get a $G$-equivalence, and therefore $E(\bar{\mathbb{F}}_p, \Gamma)$ and $E_h$ have the same $RO(G)$-periodicity.  It follows that $E(k, \Gamma)$ and $E_h$ have the same $RO(G)$-periodicity.  
\end{proof}


\begin{rem} \label[rem]{rem:LaMackeyStructure} \rm
The lattice $\La_{h,G}$ has a richer structure than that of abelian groups.  It has a Mackey functor structure, which we denote by $\underline{\La}_{h,G}$, obtained by considering all abelian groups $\La_{h,H}$ for $H \subset G$.  To explain this Mackey functor structure, recall that the real representation ring $RO(G)$ has a Mackey functor structure, denoted by $\underline{RO}$, as follows: $\underline{RO}(G/H) = RO(H)$, the restriction map is given by the restriction of representations, and the transfer map is given by the induction of representations. For subgroups $K \subset H \subset G$, if $i_H^* E_h$ is $V$-periodic for some $V \in RO(H)$, then $i_K^* E_h$ is $\Res^H_K(V)$-periodic by definition; if $i_K^* E_h$ is $W$-periodic for some $W \in RO(K)$, then $i_H^* E_h$ is $\ind_K^H(W)$-periodic by \cref{lemma:normperiodicity}. Therefore, the restriction and transfer (induction) maps are closed in $\underline{\La}_{h,G} \subset \underline{RO}$, and $\underline{\La}_{h,G}$ inherits a Mackey functor structure from $\underline{RO}$.    
\end{rem}




By \cref{lem:coprime}, it suffices to restrict our attention to the cases where $G$ is $\Cn$ or $Q_8$ when studying the lattice $\La_{h, G}$. In general, determining $\La_{h,G}$ completely for arbitrary $h$ and $G$ is a difficult task. However, the Mackey functor structure of $\underline{\La}_{h,G}$ can be used to identify an important sublattice of $\La_{h,G}$. To describe this sublattice, we first recall some notation for elements in $RO(G)$.

When $G = \Cn$, the representation spheres associated with the $2$-dimensional representations corresponding to rotations by $(\frac{2k\pi}{2^{n}})$ and $(\frac{2k'\pi}{2^{n}})$ are $G$-equivalent 2-locally if the integers $k$ and $k'$ have the same $2$-adic valuation.  Therefore, 2-locally, $RO(\Cn)$ is generated by the irreducible representations \{$1, \sigma_{2^{n}}, \lambda_1, \dots, \lambda_{n-1}$\}, where $\lambda_i$ is the $2$-dimensional real representation corresponding to the rotation by $(\frac{\pi}{2^i})$.  Note that $\lambda_0 = 2 \sigma_{2^n}$.  We will also denote the regular representation of $\Cn$ by $\rho_{2^n}$, which is equal to 
\[\rho_{2^n}=1 + \sigma_{2^n} + \lambda_1 + 2\lambda_2 + \cdots + 2^{n-2} \lambda_{n-1}.\]
The real representations associated to $G = Q_8$ have already been discussed in the paragraph before \cref{thm:Q8periodicity}. 


\begin{thm}\label[thm]{theorem:ROGperiodicity}
As a $G$-spectrum, $E_h$ has the following $RO(G)$-graded periodicities: 
\begin{enumerate}
\item  When $(h, G) = (2^{n-1} m, \Cn)$: 
\begin{enumerate}[label=(\alph*)]
        \item $\rho_{2^n} = 1 + \sigma + \lambda_1 + 2 \lambda_2 + \cdots + 2^{n-2}\lambda_{n-1}$; 
        \item $2^{2^{n-i}m +n-i+1}-2^{2^{n-i}m+n-i}\lambda_{n-i}$, $1\leq i\leq n$.
\end{enumerate}
\item When $(h, G) = (4\ell+2, Q_8)$: 
\begin{enumerate}[label=(\alph*)]
\item $\rho_{Q_8} = 1+\sigma_i+\sigma_j+\sigma_k+\mathbb{H}$;
\item $2^{2\ell+2}+2^{2\ell+2}\sigma_i-2^{2\ell+2}\sigma_j-2^{2\ell+2}\sigma_k$;\\
$2^{2\ell+2}-2^{2\ell+2}\sigma_i+2^{2\ell+2}\sigma_j-2^{2\ell+2}\sigma_k$;\\
$2^{2\ell+2}-2^{2\ell+2}\sigma_i-2^{2\ell+2}\sigma_j+2^{2\ell+2}\sigma_k$;
\item $2^{h+2}+2^{h+2}\sigma_i-2^{h+1}\mathbb{H}$; \\
$2^{h+2}+2^{h+2}\sigma_j-2^{h+1}\mathbb{H}$;\\
$2^{h+2}+2^{h+2}\sigma_k-2^{h+1}\mathbb{H}$.
\end{enumerate}
\end{enumerate}
\end{thm}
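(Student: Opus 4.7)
The plan is to realize each listed virtual representation as an element of $\La_{h,G}$ by combining three inputs: \cref{corollary:periodicityfromslicediffthm}, which provides the base sign-periodicity $2^{h+1}(1-\sigma_{2^n}) \in \La_{h,C_{2^n}}$; the Hahn--Shi $(1+\sigma_2)$-periodicity in $\La_{h,C_2}$ from \cite[Theorem~1.4]{HS20}; and the closure of $\La$ under two operations, namely the norm $\ind_H^G\colon \La_{h,H}\to \La_{h,G}$ from \cref{lemma:normperiodicity} (which makes $\underline{\La}_{h,\bullet}$ a sub-Mackey-functor of $\underline{RO}$, cf.\ \cref{rem:LaMackeyStructure}) and the transchromatic pullback, which carries any dimension-zero class in $\La_{h/2,G/C_2}$ to $\La_{h,G}$ via the quotient $G\to G/C_2$ as a consequence of \cite[Theorem~B]{MSZ24}.

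For Case (1), I will handle the two families separately. Periodicity (1a), namely $\rho_{2^n}$, follows by induction on $n$: starting from $\rho_2=1+\sigma_2\in \La_{h,C_2}$, iteratively applying the norm along the chain $C_2\subset C_4\subset\cdots\subset C_{2^n}$ yields $\rho_{2^n}\in \La_{h,C_{2^n}}$, since the induction of the regular representation is again the regular representation. For periodicity (1b), I again induct on $n$, the base case $n=1$ being precisely \cref{corollary:periodicityfromslicediffthm}. For the inductive step at level $n$ and index $1\leq i\leq n-1$, I apply $\ind_{C_{2^{n-1}}}^{C_{2^n}}$ to the inductive hypothesis at level $n-1$ with parameter $m'=2m$ (which lives at the same height $h=2^{n-1}m$). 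A $2$-local character computation gives $\ind(1)=1+\sigma_{2^n}$ and $\ind(\lambda_j)\simeq 2\lambda_{j+1}$ (the induced representation decomposes as $\lambda_{j+1}+\lambda_{j+1}'$, with $\lambda_{j+1}'$ being $2$-locally equivalent to $\lambda_{j+1}$), so the norm produces the correct $\lambda_{n-i}$-term with the right coefficient but introduces an unwanted $\sigma_{2^n}$-contribution. This is cancelled by adding a scalar multiple of the sign-periodicity $2^{2^{n-i}m+1}(1-\sigma_{2^n})\in \La_{h,C_{2^n}}$, which I obtain by iterating the transchromatic pullback $i-1$ times on \cref{corollary:periodicityfromslicediffthm} applied to the subgroup $C_{2^{n-i+1}}$ at height $2^{n-i}m$ (noting that the sign representation pulls back to the sign representation along each surjection $C_{2^{k+1}}\to C_{2^k}$). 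The boundary case $i=n$ is handled purely by iterating the transchromatic pullback: starting from $2^{m+1}(1-\sigma_2)\in \La_{m,C_2}$ and iterating $n-1$ times yields $2^{m+1}(1-\sigma_{2^n})=2^{m+1}-2^m\lambda_0$.

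For Case (2), I reduce everything to Case (1) via norms from cyclic subgroups of $Q_8$. Periodicity (2a) is the image of the Hahn--Shi $(1+\sigma_2)$-periodicity at the central $C_2\subset Q_8$ under $\ind_{C_2}^{Q_8}$; the induction identities listed before \cref{thm:Q8periodicity} give $\ind(1+\sigma_2)=1+\sigma_i+\sigma_j+\sigma_k+\mathbb{H}=\rho_{Q_8}$. Each periodicity in the second family of (2b) arises as the image under $\ind_{C_4\langle\ast\rangle}^{Q_8}$ of the periodicity $2^{m+1}(1-\sigma_4)\in \La_{h,C_4}$ (Case (1b) at $n=2,i=2$, with $m=2\ell+1$); for example, $\ind_{C_4\langle i\rangle}^{Q_8}(2^{m+1}(1-\sigma_4))=2^{2\ell+2}(1+\sigma_i-\sigma_j-\sigma_k)$, and the analogous computations for the other two $C_4$ subgroups yield the other two elements. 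Each periodicity in the third family arises as the image of $2^{h+2}-2^{h+1}\lambda_1\in \La_{h,C_4}$ (Case (1b) at $n=2,i=1$); for example, $\ind_{C_4\langle i\rangle}^{Q_8}(2^{h+2}-2^{h+1}\lambda_1)=2^{h+2}(1+\sigma_i)-2^{h+1}\mathbb{H}$, using the character identity $\ind(\lambda_1)=\mathbb{H}$ for each of the three $C_4$ subgroups.

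The main technical obstacle is the coefficient bookkeeping in the induction for (1b): each norm step doubles the multiplicity of the $\lambda$-term, while each transchromatic pullback preserves the coefficient but doubles the height, so the exponent $2^{n-i}m+n-i$ must be assembled from precisely the right number of each operation. Once the induction is set up with the substitution $m\mapsto 2m$ tracked carefully between levels, and the transchromatic correction is chosen to exactly cancel the extraneous $\sigma$-term from the norm, each step reduces to a short arithmetic check.
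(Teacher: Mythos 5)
Your proof is correct, and parts (1a), (2a)--(2c) match the paper's arguments (for (1a) you iterate the norm along $C_2\subset C_4\subset\cdots\subset C_{2^n}$ rather than applying $\ind_{C_2}^{C_{2^n}}$ in a single step, which is the same by transitivity of induction). The genuine difference is in the induction for (1b). Both proofs use the same three inputs --- the sign-periodicity of \cref{corollary:periodicityfromslicediffthm}, the norm from an index-2 cyclic subgroup (\cref{lemma:normperiodicity}), and the transchromatic pullback of \cite[Theorem~B]{MSZ24} --- but distribute them across the indices differently. The paper applies a single transchromatic pullback from the level-$(n-1)$ induction hypothesis at height $h/2$, which produces the indices $2\le i\le n$ at level $n$ all at once, and uses the norm from $C_{2^{n-1}}$ at height $h$ (with $m'=2m$) only for $i=1$, cancelling the resulting $\sigma_{2^n}$-discrepancy with a multiple of the just-obtained $i=n$ element. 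You instead run the norm for all $1\le i\le n-1$, cancel each $\sigma_{2^n}$-discrepancy with an $(i-1)$-fold iterated transchromatic pullback of \cref{corollary:periodicityfromslicediffthm} applied at the pair $(C_{2^{n-i+1}},\,2^{n-i}m)$, and get $i=n$ by pure iterated pullback from $(C_2,m)$. Both arrangements are valid and produce the same lattice elements; the paper's is somewhat leaner because one transchromatic step does most of the work, while yours makes the arithmetic of the exponent $2^{n-i}m+n-i+1$ fully transparent in terms of norm-doublings versus height-doublings. Two points you should make explicit: (i) as the paper does implicitly, the level-$(n-1)$ induction hypothesis must be taken over all positive integers $m'$, in particular the even value $m'=2m$; and (ii) the paper invokes \cref{prop:modelfree} before citing \cite{MSZ24}, since that transchromatic theorem is proved for a specific form of $E_h$ and one needs the model-independence of $\La_{h,G}$ to transport it, so your argument should do the same.
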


\begin{proof}
For $(1a)$, \cite[Theorem~1.2]{HS20} shows that $i_{C_2}^*E_h$ is $\rho_2$-periodic. Therefore, by \cref{lemma:normperiodicity}, $E_h$ has periodicity $\ind_{C_2}^{\Cn}(\rho_2)= \rho_{2^{n}}$ when considered as a $\Cn$-spectrum. 

To prove the periodicities in $(1b)$, we will use \cref{prop:modelfree} and the specific form of $E_h$ in \cite{BHSZ21} that are constructed based on the generators for $\pi_{*\rho_2}^{C_2} \BPCn$ in \cite{HHR16}.  Recall that for $1 \leq i \leq n$, a collection of generators $\left\{\bar{t}_k^{\Ci} \, |\,k\geq 1 \right\}$ in $\pi_{*\rho_2}^{C_2} \BPCi$ are constructed in \cite{HHR16} (see also \cite{BHSZ21}).  For each $k\geq 1$, we will also use $\bar{t}^{\Ci}_k$ to denote the image of $\bar{t}^{\Ci}_k$ in $\pi_{*\rho_2}^{C_2} \BPCn$ along the left unit map $i_{C_2}^* \BPCi \rightarrow i_{C_2}^*\BPCn$. \cite[Theorem~1.8]{BHSZ21} constructs a form of $E_h$ that admits a $\Cn$-equivariant map
\[D^{-1}\BPCn \longrightarrow E_h,\]
where $D = \prod_{i=1}^{n} N_{C_2}^{\Cn} (\bar{t}^{C_{2^i}}_{2^{n-i} m})$. 

We will prove the periodicities in $(1b)$ by using induction on $G = \Cn$.  Suppose we have proven the claim for $G = C_{2^{n-1}}$ and for all heights $h$.  By \cite[Theorem~8.3]{MSZ24}, it is a consequence of the Transchromatic Isomorphism Theorem that for $V \in RO(\Cn/C_2)$, $u_V$ is a permanent cycle in the $RO(\Cn)$-graded homotopy fixed point spectral sequence of $E_{h}$ if and only if it is a permanent cycle in the $RO(\Cn/C_2)$-graded homotopy fixed point spectral sequence of $E_{h/2}$.  Here, $V$ is treated as an element in $RO(\Cn)$ through pullback along the map $\Cn \to \Cn/C_2$.  In other words, $(|V| - V)$ is a $RO(\Cn)$-periodicity for $E_h$ if and only if $(|V| - V)$ is a $RO(\Cnminusone)$-periodicity for $E_{h/2}$ \cite[Theorem~B]{MSZ24}.  

By the induction hypothesis, the following are $RO(\Cn/C_2)$-periodicities of $E_{h/2}$: 
\[2^{2^{n-1-i}m +n-i}-2^{2^{n-1-i}m+n-1-i}\lambda_{n-1-i}, \,\,\, 1\leq i\leq n-1\]
and therefore they are also $RO(\Cn)$-periodicities of $E_h$ by the discussion above. Note that these periodicities can also be written as 
\[2^{2^{n-i}m +n-i+1}-2^{2^{n-i}m+n-i}\lambda_{n-i}, \,\,\,2\leq i\leq n.\]
It remains to prove that when $i = 1$, the element $(2^{2^{n-1}m +n}-2^{2^{n-1}m+n-1}\lambda_{n-1})$ is a $RO(\Cn)$-periodicity for $E_h$.  To do so, consider the $\Cnminusone$-spectrum $i_{\Cnminusone}^*E_h$, which corresponds to the pair $(h, G) = (2^{n-2}\cdot(2m), \Cnminusone)$.  By the induction hypothesis, $i_{\Cnminusone}^*E_h$ has $RO(\Cnminusone)$-periodicity $(2^{2^{n-1}m + n-1} - 2^{2^{n-1}m + n-2}\lambda_{n-2})$. Applying \cref{lemma:normperiodicity}, $E_h$ has $RO(\Cn)$-periodicity 
\[\Ind_{\Cnminusone}^{\Cn}\left(2^{2^{n-1}m + n-1} - 2^{2^{n-1}m + n-2} \lambda_{n-2} \right) =  2^{2^{n-1}m + n-1} + 2^{2^{n-1}m + n-1} \sigma - 2^{2^{n-1}m + n-1} \lambda_{n-1}.\]
Combining this with the periodicity $2^{m+1} - 2^m \lambda_0 = 2^{m+1} - 2^{m+1}\sigma$ that we have already obtained, we get the desired $(2^{2^{n-1}m + n}- 2^{2^{n-1}m + n-1}\lambda_{n-1})$-periodicity.  This completes the induction step.

For $(2a)$, the proof for the $RO(Q_8)$-periodicities is similar to that of (1).  For $(a)$, since $i_{C_2}^*E_h$ is $\rho_2$-periodic, $E_h$ has periodicity 
\[\ind_{C_2}^{Q_8}(\rho_2)= \rho_{Q_8} = 1 + \sigma_i + \sigma_j + \sigma_k+\mathbb{H}\]
when considered as a $Q_8$-spectrum.

For $(2b)$, we have already shown in part $(b)$ of $(1)$ that when $n = 2$, $m = 2\ell+1$, and $i = 2$, $i_{C_4}^*E_h$ is $(2^{2\ell+2}-2^{2\ell+2}\sigma_4)$-periodic.  The quaternion group $Q_8$ has three cyclic $C_4$-subgroups: $C_4\langle i \rangle, C_4\langle j\rangle$ and $C_4\langle k\rangle$. Applying \cref{lemma:normperiodicity} shows that $E_h$ has the following $RO(Q_8)$-periodicities:
\begin{align*}
\ind_{C_4\langle i \rangle}^{Q_8}(2^{2\ell+2}-2^{2\ell+2}\sigma_4) &= 2^{2\ell+2}+2^{2\ell+2}\sigma_i-2^{2\ell+2}\sigma_j-2^{2\ell+2}\sigma_k, \\
\ind_{C_4\langle j \rangle}^{Q_8}(2^{2\ell+2}-2^{2\ell+2}\sigma_4) &= 2^{2\ell+2}+2^{2\ell+2}\sigma_j-2^{2\ell+2}\sigma_i-2^{2\ell+2}\sigma_k, \\
\ind_{C_4\langle k \rangle}^{Q_8}(2^{2\ell+2}-2^{2\ell+2}\sigma_4) &= 2^{2\ell+2}+2^{2\ell+2}\sigma_k-2^{2\ell+2}\sigma_i-2^{2\ell+2}\sigma_j. 
\end{align*}



For $(2c)$, we have shown in part $(b)$ of $(1)$ that when $n = 2$, $m = 2\ell+1$, and $i= 1$, $i_{C_4}^* E_h$ is $(2^{h+2} - 2^{h+1}\lambda_1)$-periodic. Applying \cref{lemma:normperiodicity} shows that $E_h$ has the following $RO(Q_8)$-periodicities:
\begin{align*}
\ind_{C_4\langle i \rangle}^{Q_8}(2^{h+2}-2^{h+1}\lambda_1)&=2^{h+2}+2^{h+2}\sigma_i-2^{h+1}\mathbb{H},  \\
\ind_{C_4\langle j \rangle}^{Q_8}(2^{h+2}-2^{h+1}\lambda_1)&=2^{h+2}+2^{h+2}\sigma_j-2^{h+1}\mathbb{H}, \\
\ind_{C_4\langle k \rangle}^{Q_8}(2^{h+2}-2^{h+1}\lambda_1)&=2^{h+2}+2^{h+2}\sigma_k-2^{h+1}\mathbb{H}. 
\end{align*}

\end{proof}


%

\begin{rem}\rm
The $RO(G)$-periodicities of $E_h$ in \cref{theorem:ROGperiodicity} can be used to recover the integer periodicities of $EO_h(G)$ established in \cref{thm:cyclicperiodicity} and \cref{thm:Q8periodicity}.  When $(h, G) = (2^{n-1}m, \Cnminusone)$, we have 

\begin{align*}
& \, \, 2^{2^{n-1}m+1} \cdot \rho_{2^n} + \sum_{i =1}^n 2^{2^{n-1}m - 2^{n-i}m} \cdot (2^{2^{n-i}m +n-i+1}-2^{2^{n-i}m+n-i}\lambda_{n-i}) \\
= & \, \,  2^{2^{n-1}m+1} + \sum_{i = 1}^n 2^{2^{n-1}m + n-i+1} \\
= & \, \, 2^{2^{n-1}m + n+1} = 2^{h+n+1}. 
\end{align*}

When $(h, G) = (4\ell+2, Q_8)$, we have 
\begin{align*}
&\,\,2^{4\ell+3} \cdot (1+\sigma_i+\sigma_j+\sigma_k+\mathbb{H}) \\
+&\,\, 2^{2\ell+1} \cdot (2^{2\ell+2}+2^{2\ell+2}\sigma_i-2^{2\ell+2}\sigma_j-2^{2\ell+2}\sigma_k)  \\
+& \,\, 2^{2\ell+2} \cdot (2^{2\ell+2}-2^{2\ell+2}\sigma_i+2^{2\ell+2}\sigma_j-2^{2\ell+2}\sigma_k) \\
+& \,\, 2^{2\ell+2} \cdot (2^{2\ell+2}-2^{2\ell+2}\sigma_i-2^{2\ell+2}\sigma_j+2^{2\ell+2}\sigma_k) \\
+& \,\,  1 \cdot (2^{4\ell+4}+2^{4\ell+4}\sigma_i-2^{4\ell+3}\mathbb{H})    \\
= & \,\, 2^{4\ell+6} = 2^{h+4}. 
\end{align*}
\end{rem}

\begin{defn}\rm\label[defn]{defn:normlattice}
When $(h, G) = (2^{n-1}m, \Cn)$ or $(4\ell+2, Q_8)$, let $\La'_{h,G}$ be the sublattice of $\La_{h, G}$ that is generated by the $RO(G)$-periodicities in \cref{theorem:ROGperiodicity}.
\end{defn}

Recall from \cref{defn:complexity} that the \textit{complexity of $\pi_\star^G E_h$} is defined as the quotient $RO(G)/\La_{h,G}$. Since $\La_{h,G}' \subset \La_{h,G}$, there is a natural quotient map
\[RO(G)/\La_{h, G}' \longrightarrow RO(G)/\La_{h, G},\] 
and hence $RO(G)/\La_{h,G}'$ provides an upper bound for $RO(G)/\La_{h,G}$.

\begin{thm}\label[thm]{thm:finiteComplexity}
The complexity of $\pi_\star^G E_h$ is finite, given by the following bounds: 
\begin{enumerate}
\item When $(h,G)=(2^{n-1} m, \Cn)$, 
\[RO(G)/\La'_{h,G} \cong  \bigoplus_{i=1}^{n-1} \mathbb{Z}/2^{2^{n-i-1}m+n-i} \oplus \mathbb{Z}/2^{h+n+1}.\]
\item When $(h,G)=(4\ell+2, Q_8)$, 
\[RO(G)/\La'_{h,G} \cong \mathbb{Z}/2^{2\ell+2}\oplus \mathbb{Z}/2^{2\ell+3}\oplus\mathbb{Z}/2^{2\ell+3}\oplus \mathbb{Z}/2^{4\ell+6}.\]
\end{enumerate}
\end{thm}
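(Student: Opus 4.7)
The plan is to compute the Smith normal form of the $\mathbb{Z}$-matrix whose rows are the $RO(G)$-periodicities of \cref{theorem:ROGperiodicity}, expressed in the $2$-local $\mathbb{Z}$-basis of $RO(G)$. For $\Cn$ the relation matrix is square of size $n+1$ and its determinant equals the claimed order as a sanity check; for $Q_8$ the matrix is $7\times 5$ but has column rank $5$, so the quotient is finite in both cases. In either case the structural decomposition becomes transparent after a unimodular change of basis in $RO(G)$.

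For $(h,G)=(2^{n-1}m,\Cn)$, the plan is to work in the new basis $y_0=1$, $y_1=1-\sigma$, and $y_{j+1}=2-\lambda_j$ for $1\leq j\leq n-1$ (which is easily checked to be unimodular). In these coordinates the periodicities of \cref{theorem:ROGperiodicity}(1b) become the diagonal torsion relations $2^{m+1}y_1=0$ and $2^{2^jm+j}y_{j+1}=0$, while the regular representation periodicity becomes $2^ny_0-y_1-\sum_{j=1}^{n-1}2^{j-1}y_{j+1}=0$. Using the latter to eliminate $y_1$ and substituting into $2^{m+1}y_1=0$ produces, after factoring out $2^{m+1}$, the relation $2^{m+1}z_1=0$ where $z_1:=2^ny_0-y_2-2y_3-\cdots-2^{n-2}y_n$. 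Iterating: at step $k\geq 2$ one uses the defining identity for $z_{k-1}$ to express $y_k$ in terms of $z_{k-1}$ and $y_{k+1},\ldots,y_n$, substitutes into $2^{2^{k-1}m+k-1}y_k=0$, and invokes the established torsion $2^{2^{k-2}m+k-1}z_{k-1}=0$ to eliminate the $z_{k-1}$-summand (legitimately, since $2^{k-1}\geq 2^{k-2}$), then factors out one more $2$ to obtain $2^{2^{k-1}m+k}z_k=0$ with $z_k:=2^{n-k+1}y_0-y_{k+1}-2y_{k+2}-\cdots-2^{n-k-1}y_n$. After the $(n-1)$st iteration the last remaining coupling collapses into the single diagonal relation $2^{h+n+1}y_0=0$, giving a fully diagonal presentation in the generators $y_0,z_1,\ldots,z_{n-1}$ that matches the claimed decomposition after re-indexing $i=n-k$.

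For $(h,G)=(4\ell+2,Q_8)$, the plan is analogous, using the basis $y_0=1$, $y_r=1-\sigma_r$ for $r\in\{i,j,k\}$, and $y_4=4-\mathbb{H}$. Pairwise sums of the periodicities in \cref{theorem:ROGperiodicity}(2b) yield $2^{2\ell+3}y_r=0$ for $r=1,2,3$, and the triple sum yields $2^{2\ell+2}(y_1+y_2+y_3)=0$; the further unimodular change $u_1=y_1+y_2+y_3$, $u_2=y_2$, $u_3=y_3$ diagonalises these into $2^{2\ell+2}u_1=0$ and $2^{2\ell+3}u_r=0$ for $r=2,3$ (and one checks this system is equivalent to the three original (b)-relations). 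Each (c)-periodicity then reduces to $2^{h+1}y_4=0$ because the inequality $h+2\geq 2\ell+3$ makes the $u_r$-contributions vanish, and (c2) and (c3) become redundant with (c1). Using the regular representation periodicity $8y_0=u_1+y_4$ to eliminate $u_1$ and substituting into $2^{2\ell+2}u_1=0$ produces the coupling $2^{2\ell+5}y_0-2^{2\ell+2}y_4=0$; combined with $2^{h+1}y_4=0$ the resulting $2\times 2$ submatrix on $(y_0,y_4)$ has determinant $2^{6\ell+8}$ and entry-gcd $2^{2\ell+2}$, hence Smith normal form $\mathrm{diag}(2^{2\ell+2},2^{4\ell+6})$. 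Combined with the uncoupled $u_2,u_3$ summands this yields the claimed $\mathbb{Z}/2^{2\ell+2}\oplus\mathbb{Z}/2^{2\ell+3}\oplus\mathbb{Z}/2^{2\ell+3}\oplus\mathbb{Z}/2^{4\ell+6}$. The main obstacle in both cases is purely bookkeeping: one must verify that every change of generators is unimodular and that each inequality required to absorb one torsion relation into the next holds, both of which are routine given the explicit exponents.
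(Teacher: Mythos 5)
Your proposal is correct and takes essentially the same approach as the paper: both compute the Smith normal form of the relation matrix formed by the periodicities of \cref{theorem:ROGperiodicity}. The paper records that matrix in the standard basis of $RO(G)$ and simply asserts the result of the Smith normal form computation, whereas you carry the reduction out explicitly via a sequence of unimodular basis changes; the bookkeeping checks out, including the final diagonal forms and the reindexing that matches the stated invariant factors.
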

\begin{proof}


For $(1)$, the claim follows directly from computing the Smith normal form of the matrix
\[\begin{pmatrix}
1 & 1 & 1 & 2 & \cdots & 2^{n-2} \\
2^{m+1} & - 2^{m+1} \\
2^{2m+2} & & -2^{2m+1} \\
2^{4m+3}&&& -2^{4m+2} \\ 
\vdots &&&& \ddots \\ 
2^{2^{n-1}m + n} &&&&& -2^{2^{n-1}m + n-1}
\end{pmatrix}.\]
For $(2)$, note that for the $RO(Q_8)$-periodicities in \cref{theorem:ROGperiodicity}, the periodicities $(2^{h+2}+2^{h+2}\sigma_j-2^{h+1}\mathbb{H})$ and $(2^{h+2}+2^{h+2}\sigma_k-2^{h+1}\mathbb{H})$ are linear combinations of the other five periodicities:
\begin{align*}
2^{4\ell+4}+2^{4\ell+4}\sigma_j-2^{4\ell+3}\mathbb{H} & = (2^{4\ell+4}+2^{4\ell+4}\sigma_i-2^{4\ell+3}\mathbb{H}) \\
& - 2^{2\ell+1} \cdot (2^{2\ell+2}+2^{2\ell+2}\sigma_i-2^{2\ell+2}\sigma_j-2^{2\ell+2}\sigma_k) \\
& + 2^{2\ell+1} \cdot (2^{2\ell+2}-2^{2\ell+2}\sigma_i+2^{2\ell+2}\sigma_j-2^{2\ell+2}\sigma_k),    
\end{align*}
\begin{align*}
2^{4\ell+4}+2^{4\ell+4}\sigma_k-2^{4\ell+3}\mathbb{H} & = (2^{4\ell+4}+2^{4\ell+4}\sigma_i-2^{4\ell+3}\mathbb{H}) \\
& - 2^{2\ell+1} \cdot (2^{2\ell+2}+2^{2\ell+2}\sigma_i-2^{2\ell+2}\sigma_j-2^{2\ell+2}\sigma_k) \\
& + 2^{2\ell+1} \cdot (2^{2\ell+2}-2^{2\ell+2}\sigma_i-2^{2\ell+2}\sigma_j+2^{2\ell+2}\sigma_k).    
\end{align*}
The claim now follows directly from computing the Smith normal form of the matrix 
\[\begin{pmatrix}
1 & 1 & 1 & 1 & 1 \\ 
2^{2\ell+2} & 2^{2\ell+2} & -2^{2\ell+2} & -2^{2\ell+2} \\ 
2^{2\ell+2} & - 2^{2\ell+2} & 2^{2\ell+2} & -2^{2\ell+2} \\ 
2^{2\ell+2} &- 2^{2\ell+2} & -2^{2\ell+2} & 2^{2\ell+2} \\ 
2^{4\ell+4} &2^{4\ell+4} && &-2^{4\ell+3}
\end{pmatrix}.\]
\end{proof}

\begin{example}\rm
The lattices $\La'_{h, C_2}$ and $\La_{h,C_2}$ of $E_h$ are both 
\[\mathbb{Z}\langle\rho_{2} \rangle\oplus \mathbb{Z}\langle 2^{h+1}-2^{h+1}\sigma\rangle.\]
We have 
\[RO(C_2)/\La_{h,C_2} = RO(C_2)/\La'_{h, C_2} \cong \mathbb{Z}/2^{h+2}.\]
\end{example}

\begin{example}\rm 
The lattice $\La'_{2, C_4}$ of $E_2$ is 
\[\mathbb{Z}\langle\rho_{4} \rangle\oplus \mathbb{Z}\langle 4-4\sigma\rangle\oplus\mathbb{Z}\langle 16 - 8 \lambda_1\rangle.\]
It is a result of the computations in \cite{HHR17, BBHS20} that the lattice $\La_{2,C_4}$ is
\[\mathbb{Z}\langle\rho_{4} \rangle\oplus \mathbb{Z}\langle 4-4\sigma\rangle\oplus\mathbb{Z}\langle 10-2\sigma-4\lambda_1\rangle.\]
The periodicity $(10-2\sigma-4\lambda_1)$ arises from the fact that the class $u_{4 \lambda_1}u_{2\sigma}$ is a permanent cycle in the $C_4$-HFPSS of $E_2$.  
We have 
\begin{align*}
RO(C_4)/\La'_{2,C_4} &\cong \mathbb{Z}/4 \oplus \mathbb{Z}/32, \\
RO(C_4)/\La_{2,C_4} &\cong \mathbb{Z}/2 \oplus \mathbb{Z}/32.
\end{align*}
\end{example}

\begin{example}\rm
The lattice $\La'_{4, C_4}$ of $E_4$ is 
\[\mathbb{Z}\langle\rho_{4} \rangle\oplus \mathbb{Z}\langle 8-8\sigma\rangle\oplus\mathbb{Z}\langle 64 - 32 \lambda_1\rangle.\]
It is a result of the computations in \cite{HSWX23} that the lattice $\La_{4,C_4}$ is
\[\mathbb{Z}\langle\rho_{4} \rangle\oplus \mathbb{Z}\langle 8-8\sigma \rangle\oplus\mathbb{Z}\langle 36-4\sigma-16\lambda_1\rangle.\]
The periodicity $(36-4\sigma-16\lambda_1)$ arises from the fact that the class $u_{16 \lambda_1}u_{4\sigma}$ is a permanent cycle in the $C_4$-HFPSS of $E_4$.  
We have 
\begin{align*}
RO(C_4)/\La'_{4,C_4} &\cong \mathbb{Z}/8 \oplus \mathbb{Z}/128, \\
RO(C_4)/\La_{4,C_4} &\cong \mathbb{Z}/4 \oplus \mathbb{Z}/128.
\end{align*}
\end{example}










\section{Norm structures and norms of differentials}\label{sec:norm}

In this section, we show that a norm structure on filtered equivariant spectra induces relationships between differentials across various subgroups in the associated spectral sequences. The resulting differentials are called the \textit{norm-transfer differentials} (\Cref{thm:predicttransfer}). To accomplish this, we refine the notion of a norm structure on filtered $G$-spectra so that it encodes the relevant higher coherence data.

In the literature (\cite[Section~4]{HHR17} and \cite[Section~3.4]{MSZ23}), 
a norm structure on a filtered $G$-spectrum $P_{\bullet}^G \in F(\mathbb{Z}^{op}, Sp^G)$ consists of maps 
\[
N_H^G P^H_s \longrightarrow P^G_{ks} 
\quad \text{and} \quad  
N_H^G \bigl(P^H_s / P^H_{s+r}\bigr) \longrightarrow P^G_{ks} / P^G_{ks+r},
\]  
which are up to homotopy compatible with the structure maps $P_s \to P_{s-1}$ and $P_s \to P_s/P_{s+r}$.  More specifically, the following diagrams commute up to homotopy
\[\begin{tikzcd}
N_H^G P_s^H \ar[r] \ar[d] & P_{ks}^G \ar[d] \\
N_H^G P_{s-1}^H \ar[r] & P_{k(s-1)}^G
\end{tikzcd} \hspace{0.3in} 
\begin{tikzcd}
N_H^G P_s^H \ar[r] \ar[d] & P_{ks}^G \ar[d] \\ 
N_H^G \left(P_s^H/P_{s+r}^H\right) \ar[r] &P_{ks}^G/P_{ks+r}^G
\end{tikzcd}\]
Here, $H \subset G$ is a subgroup of index $k$, and $P^H_{\bullet} := i^*_H P^G_{\bullet}$. 

For the proof of the norm-transfer differential in \cref{thm:predicttransfer}, however, we will need the explicit compatibility data of the norm structure. 
To this end, it is convenient to formulate the norm structure in the setting 
of $\infty$-categories (\cref{defn:normcoherent}), following \cite[Section~1.2.2]{Lur17}.

 


In this section and the next section (\cref{sec:sharpness}), we will let $H$ be a subgroup of $G$ of index $k$. We denote by $\mathcal{C}^{\mathrm{Fil}} := F(N(\mathbb{Z}^{op}), \mathcal{C})$ the category of filtered objects in $\mathcal{C}$.  This $\infty$-category admits a symmetric monoidal structure given by the Day convolution. Our main example is $\mathcal{C} = \SpG$, the $\infty$-category of $G$-spectra.




Given a function $f: G/H \to \mathbb{Z}$, its \textit{weight} $|f|$ is the sum 
\[
|f| = \sum_{[g] \in G/H} f([g]).
\] 
Let $I_{\geq s}$ denote the poset of all functions $f \in \text{Fun}(G/H, \mathbb{Z})$ with weight $|f| \geq s$, 
ordered by $f_1 \to f_2$ if $f_1([g]) \geq f_2([g])$ for every $[g] \in G/H$. 
Given a filtered $H$-spectrum $P_{\bullet}^H \in \SpFilH$, define a functor  
\[
F_s: I_{\geq s} \longrightarrow \SpH
\]
by sending a function $f$ to the $H$-spectrum 
$
\bigwedge_{[g] \in G/H} P^H_{f([g])}$. The colimit of the functor $F_s$ over $I_{\geq s}$ admits a lift to a $G$-spectrum.  This is because it is the colimit of the following simplicial diagram, where all the simplices and all the maps can be made $G$-equivariant:
\begin{equation}\label{eq:simplicial}
\begin{tikzcd}[column sep=small]
\cdots
  \arrow[r, shift left=1.8]
  \arrow[r, shift left=0.6]
  \arrow[r, shift right=0.6]
  \arrow[r, shift right=1.8]
& \bigvee\limits_{(f_0 \to f_1 \to f_2) \in I_{\geq s}} F_s(f_0)
  \arrow[r, shift left=1.2]
  \arrow[r]
  \arrow[r, shift right=1.2]
& \bigvee\limits_{(f_0 \to f_1) \in I_{\geq s}} F_s(f_0)
  \arrow[r, shift left=0.6]
  \arrow[r, shift right=0.6]
& \bigvee\limits_{f_0 \in I_{\geq s}} F_s(f_0).
\end{tikzcd}
\end{equation}
More specifically, for $0$-simplices, we can organize the spectrum $\bigvee_{f \in I_{\geq s}} F_s(f)$ into a wedge sum of $G$-spectra as follows: given a summand $\bigwedge_{[g] \in G/H} P^H_{f([g])}$, if the stabilizer subgroup of the $G$-action on this summand is $K$, then the entire $G$-orbit of such summands has the form
\[
G_{+} \wedge_K \bigwedge_{[g] \in G/H} P^H_{f([g])}.
\]
Here, the  spectrum $\bigwedge_{[g] \in G/H} P^H_{f([g])}$ is a $K$-spectrum  and can be written as the norm $N_H^K (Y)$ for an $H$-spectrum $Y$. 

For $1$-simplices, we consider the $G$-action on the indexing morphisms. If the stabilizer subgroup of this action is $K$, then the corresponding orbit again takes the above form. Observe that the stabilizer of an indexing morphism is always contained in the stabilizers of its associated $0$-simplices. Consequently, the face maps from $1$-simplices to $0$-simplices are $G$-equivariant. By proceeding in the same way for higher simplices and face maps, the diagram \eqref{eq:simplicial} lifts to a $G$-diagram, and its colimit $\mathop{\colimit}\limits_{I_{\geq s}} F_s$ is therefore a $G$-spectrum.



\begin{defn}\rm\label[defn]{defn:normtower}
    The \textit{norm functor}
    \[
    \mathcal{N}_H^G \colon \SpFilH \longrightarrow  \SpFilG
    \]
 on filtered equivariant spectra is defined by sending a filtered $H$-spectrum $P^H_\bullet \in \SpFilH$ to the filtered $G$-spectrum $\mathcal{N}_H^G P^H_\bullet$, where $(\mathcal{N}_H^G P^H_\bullet)(s)$ is defined to be the colimit $\mathop{\colimit}\limits_{I_{\geq s}} F_s$ taken in the $\infty$-category $\SpG$. The map from $ (\mathcal{N}_H^G P^H_\bullet)(s+1)$ to $ (\mathcal{N}_H^G P^H_\bullet)(s)$ is induced by the universal property.
\end{defn}




\begin{rem}\rm \label[rem]{rem:global}

We define the norm functor on filtered objects for a fixed subgroup $H \subset G$ in order to minimize the structural assumptions required for the Norm-Transfer Differentials Theorem (\cref{thm:predicttransfer}). A global version of this construction can also be formulated. In \cite{BDGNS16}, Barwick--Dotto--Glasman--Nardin--Shah introduced the concept of a $G$-symmetric monoidal $\infty$-category, in which the multiplicative norm is incorporated into the underlying structure. In \cite{Car25}, Carrick constructed the category of filtered $G$-spectra as a $G$-symmetric monoidal $\infty$-category.


\end{rem}


 

\begin{defn} \label[defn]{defn:normcoherent}\rm
   Given a subgroup $H\subset G$, a \emph{norm structure from  $H$ to $G$} on a filtered $G$-spectrum $P_\bullet^G \in \SpFilG$ is a morphism from $\mathcal{N}_H^G (P_{\bullet}^H)$ to $P_{\bullet}^G$ in $\SpFilG$.
\end{defn}

When $H \subset G$ is fixed and clear from the context, a filtered $G$-spectrum with a norm structure is a filtered $G$-spectrum equipped with a chosen norm structure from $H$ to $G$. The following proposition shows that our notion of norm structure refines the classical one appearing in the literature \cite{HHR17, MSZ23}, in that it encodes all higher homotopies.

\begin{prop}\label[prop]{prop:explicitnorm}
For $P_{\bullet}^G$ a filtered $G$-spectrum with a norm structure, there exist maps  
\[
N_H^G P^H_s \longrightarrow P^G_{ks} 
\quad \text{and} \quad  
N_H^G \bigl(P^H_s / P^H_{s+r}\bigr) \longrightarrow P^G_{ks} / P^G_{ks+r},
\]  
which are coherently compatible with the structure map $P_s \to P_{s-1}$ and $P_s \to P_s / P_{s+r}$ 
for every integer $s$ and positive integer $r$.
\end{prop}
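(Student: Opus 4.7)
The plan is to extract both maps directly from the given morphism $\varphi: \mathcal{N}_H^G(P^H_\bullet) \to P^G_\bullet$ in $\SpFilG$, by exploiting the colimit description of $\mathcal{N}_H^G P^H_\bullet$ provided in \cref{defn:normtower}. For the first map, I would identify the constant function $f_s: [g] \mapsto s$ as a $G$-fixed object of the indexing poset $I_{\geq ks}$, since $|f_s| = ks$. Its value $F_{ks}(f_s) = \bigwedge_{[g] \in G/H} P^H_s \simeq N_H^G P^H_s$ then carries its canonical $G$-spectrum structure, and the universal map into the colimit furnishes a $G$-equivariant map
\[
\iota_s: N_H^G P^H_s \longrightarrow (\mathcal{N}_H^G P^H_\bullet)(ks).
\]
Composing with $\varphi_{ks}$ yields the desired map $N_H^G P^H_s \to P^G_{ks}$. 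Compatibility with the structure map $P^H_s \to P^H_{s-1}$ follows from the arrow $f_s \to f_{s-1}$ in $I_{\geq k(s-1)}$ (since $s \geq s-1$ pointwise), together with the fact that $\varphi$ is a morphism of filtered $G$-spectra.

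For the second map I would proceed by passing to cofibers. The key input is the standard smash-product-of-cofibers identification, which in this setting reads as a cofiber sequence of $G$-spectra
\[
\mathcal{S} \longrightarrow N_H^G P^H_s \longrightarrow N_H^G(P^H_s/P^H_{s+r}),
\]
where $\mathcal{S}$ is the ``fat wedge'' of terms in which at least one smash factor lies in $P^H_{s+r}$. Concretely, $\mathcal{S}$ is the colimit of $F_{ks}$ over the sub-poset $J \subset I_{\geq ks}$ consisting of functions $f$ with $f([g]) \geq s$ for every $[g] \in G/H$ and $f([g_0]) \geq s+r$ for at least one $[g_0]$. Every such $f$ satisfies $|f| \geq (k-1)s + (s+r) = ks+r$, so $J \subset I_{\geq ks+r}$ and the image of $\mathcal{S}$ under $\iota_s$ factors canonically through $(\mathcal{N}_H^G P^H_\bullet)(ks+r)$. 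The induced map of cofiber sequences, composed with $\varphi_{ks}$ and $\varphi_{ks+r}$ on the target, produces
\[
N_H^G(P^H_s/P^H_{s+r}) \longrightarrow P^G_{ks}/P^G_{ks+r}.
\]

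Coherent compatibility with the structure maps $P^H_s \to P^H_{s-1}$ and $P^H_s \to P^H_s/P^H_{s+r}$, together with all higher homotopies, is essentially automatic from the $\infty$-categorical formulation: $\varphi$ is a natural transformation between functors $N(\mathbb{Z}^{op}) \to \SpG$, so the diagrams assembled above are themselves diagrams of natural transformations. The main obstacle I anticipate is the identification of the cofiber sequence defining $\mathcal{S}$ inside the filtered norm. One must match the pushout decomposition of the smash product of cofibers in $\SpH$ with the sub-diagram $J \subset I_{\geq ks}$ described above, verify the weight bound $|f| \geq ks + r$ uniformly over $J$, and argue in a $G$-equivariantly coherent way (using the $G$-structure on the indexing simplicial diagram \eqref{eq:simplicial}) that the inclusion $J \hookrightarrow I_{\geq ks+r}$ realizes $\mathcal{S}$ as a canonical $G$-spectrum whose map into $(\mathcal{N}_H^G P^H_\bullet)(ks)$ factors through $(\mathcal{N}_H^G P^H_\bullet)(ks+r)$.
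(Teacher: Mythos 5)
Your proposal is correct, and for the second map it takes a genuinely different route from the paper. For the first map the argument is essentially the same as the paper's (the canonical map from the value of the functor at the constant function into the colimit, post-composed with $\varphi_{ks}$). For the second map the paper proceeds by invoking the explicit presentation of the fiber $F$ of $N_H^G P_s^H \to N_H^G(P_s^H/P_{s+r}^H)$ as a quotient of $G_+\wedge_H(P^H_{s+r}\wedge P^H_s\wedge\cdots\wedge P^H_s)$ by pieces of filtration at least $ks+2r$, following \cite[Appendix~A.3.4]{HHR16}; because of that presentation the authors must further quotient the target by $P^G_{ks+2r}$, producing a map $F\to P^G_{ks+r}/P^G_{ks+2r}$ and then passing to cofibers of a three-term diagram with bottom row $P^G_{ks+r}/P^G_{ks+2r}\to P^G_{ks}/P^G_{ks+2r}\to P^G_{ks}/P^G_{ks+r}$. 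You instead stay entirely inside the filtered-norm construction: you recognize the fiber $\mathcal{S}$ as a colimit of the indexing functor over the subposet $J$ of functions bounded below by $s$ with at least one value $\geq s+r$, observe the weight bound $|f|\geq ks+r$, and thereby factor $\mathcal{S}$ directly through $(\mathcal{N}_H^G P^H_\bullet)(ks+r)$ and hence through $P^G_{ks+r}$, with no double quotient. This is cleaner: it buys you the map $\mathcal{S}\to P^G_{ks+r}$ on the nose rather than only modulo $P^G_{ks+2r}$, and it keeps the argument internal to the colimit description of $\mathcal{N}_H^G$ rather than importing the HHR presentation of the norm's filtration. The one point you should spell out, and which you correctly flag as the anticipated obstacle, is the cofinality statement that the colimit over $J$ is computed by the punctured cube of functions taking only the two values $s$ and $s+r$; this is what identifies $\mathcal{S}$ with the fiber (the iterated fat-wedge pushout). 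Once that cofinality is in place and you check (as you implicitly do) that the relevant square commutes because the colimit inclusion $\iota_f$ for $f\in J$ factors through both $\iota_{f_s}$ and the structure map $I_{\geq ks+r}\subset I_{\geq ks}$, your argument goes through.
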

\begin{proof}
The existence of the maps $N_H^G P_s^H\rightarrow P_{ks}^G$ follows directly from \cref{defn:normcoherent}. The existence of the map $N_H^G (P^H_s /P^H_{s+r}) \rightarrow P^G_{ks}/P^G_{ks+r}$ is induced by the following diagram, where both rows are cofiber sequences: 
\[
\begin{tikzcd}
F \arrow[r] \arrow[d] & N_H^G P_s^H \arrow[r] \arrow[d] & N_H^G(P_s^H/P_{s+r}^H) \arrow[d,dashed] \\
P^G_{ks+r}/P^G_{ks+2r} \arrow[r] & P_{ks}^G/P^G_{ks+2r} \arrow[r] &P^G_{ks}/P^G_{ks+r}.
\end{tikzcd}
\]
In the diagram above, $F$ is the fiber of the map $N_{H}^G P_s^H \to N_H^G(P_s^H/P_{s+r}^H)$.  The left vertical map exists because $F$ can be constructed as a quotient of $G_+\wedge_H (P_{s+r}^H\wedge P_s^H\wedge \cdots\wedge P_{s}^H)$ by things of filtration at least $(ks+2r)$ \cite[Appendix~A.3.4]{HHR16} (see also \cite[Section~2.9]{HHR21}).  Therefore, the map 
\[G_+ \wedge_H \left(P_{s+r}^H\wedge P_{s}^H\wedge \cdots\wedge P_{s}^H\right) \longrightarrow P_{ks+r}^G\] 
gives the left vertical map $F \to P^G_{ks+r}/P^G_{ks+2r}$.
\end{proof}

Given $P^G_{\bullet} \in \SpFilG$ with a norm structure, let $H$-$\mathcal{E}_{*}^{*,\star}$ denote the $RO(H)$-graded spectral sequence 
associated with the tower $P_{\bullet}^H = i^*_H P_{\bullet}^G$. We refer to this as the $H$-level spectral sequence associated with the tower $P_{\bullet}^G$. The norm structure provides a comparison between the $H$-level spectral sequence and the $G$-level spectral sequence. In particular, the norm structure induces a norm map between the corresponding $\mathcal{E}_2$-pages:
\[
N_H^G \colon 
H\text{-}\mathcal{E}_2^{s,\,V+s} \;\longrightarrow\; 
G\text{-}\mathcal{E}_2^{ks,\, \ind_H^G(V)+ks}.
\]
Moreover, the norm structure can be used to produce differentials in the $G$-level spectral sequence from those in the $H$-level spectral sequence. For example, by \cite[Theorem~4.7]{HHR17} (see also \cite[Theorem~2.8]{HSWX23}), a differential $d_r(x) = y$ in the $H$-level spectral sequence induces a corresponding differential in the $G$-level spectral sequence of the form
\begin{equation}\label{eq:HHRnorm}
d_{\,k(r-1)+1}\bigl(N_H^G(x)\, a_{\bar{\rho}}\bigr) 
  \;=\; N_H^G(y),
\end{equation}
where $\bar{\rho} = \ind_H^G(1) - 1$.


This differential plays an important role in the computations of 
$E_2^{hC_4}$ and $E_4^{hC_4}$ \cite{HHR17,HSWX23}, as well as in establishing the bound for the strong vanishing line for $E_h^{hG}$ at all heights and all finite subgroups of $\mathbb{G}_h$ at the prime 2 \cite{DLS2022}. We will refer to it as the Hill--Hopkins--Ravenel \textit{norm differential}. In \cref{thm:predicttransfer} below, we will prove another kind of norm differential on $N_H^G(x)$, without the factor $a_{\bar{\rho}}$ (see also \cite[Theorem~5.23]{Ull13}).

\begin{rem}\rm
The conditions for a tower to admit a norm structure, as given in \cref{defn:normcoherent}, are stronger than the conditions in \cite[Section~3.4]{MSZ23}, which require only the case $r=1$ for the map
\[
N_H^G \bigl(P^H_s / P^H_{s+r}\bigr) \longrightarrow P^G_{ks} / P^G_{ks+r}.
\]
The key difference is that if only the case $r=1$ is required, then one can show that a class $y$ killed on the $H$-$\mathcal{E}_r$-page has its norm $N_H^G(y)$ killed on or before the $G$-$\mathcal{E}_{|G/H|(r-1)+1}$-page \cite[Proposition~3.12]{MSZ23}. The stronger norm structure for all $r$ produces an explicit differential killing $N_H^G(y)$ \cite[Theorem~4.7]{HHR17} \cite[Theorem~2.8]{HSWX23}.
These  are also necessary for  \cref{thm:predicttransfer}. Moreover, the proof of \cref{thm:predicttransfer} relies on higher morphisms in the coherent compatibility data, whereas the proofs of the Hill--Hopkins--Ravenel norm differential in 
\cite[Theorem~4.7]{HHR17} and \cite[Theorem~2.8]{HSWX23} do not require them.

\end{rem}

Recall that, analogous to the transfer functor $\tr_H^G$, the norm functor $N_H^G$ also satisfies a double coset formula. More precisely, let $H,K$ be subgroups of a finite group $G$, and let $X$ be an $H$-spectrum. Then
\begin{equation}\label{eq:doublecoset}
i^*_K N_H^G X \;\cong\; \bigwedge_{[g]\in K\backslash G/H} N_{K\cap H^g}^K \, i^*_{K\cap H^g} X^g.
\end{equation}
Here, $H^g$ denotes the conjugate subgroup $gHg^{-1}$, and $X^g=(gH)_+\wedge_H X$ is a copy of $X$ equipped with the $H^g$-action. Moreover, for an $H$-equivariant map $f\colon X\to Y$, we write $f^g$ for the induced map $X^g\to Y^g$ appearing as the summand map in 
\[
i^*_K N_H^G f\colon i^*_K N_H^G X \longrightarrow i^*_K N_H^G Y.
\]

\begin{thm}[Norm-Transfer Differentials]\label[thm]{thm:predicttransfer}
Let $H$ be a subgroup of $G$ of index $k$, and $P_{\bullet}^G$ a filtered $G$-spectrum equipped with a norm structure. Suppose there is a differential 
$d_r(x) = y$ in the $H$-level spectral sequence, where $x$ is in stem $V \in RO(H)$, then there is a norm-transfer differential
\[
d_r \bigl(N_H^G(x)\bigr) \;=\; \mathrm{tr}_H^G \left(y \cdot \prod_{[g]\in T} N_{H\cap H^g}^{H}\!\bigl(x^{g}\bigr)\right)
\]
in the $G$-level spectral sequence.  Here, $T = H\backslash G/H - [H]$, and the target of the differential is in stem $(\ind_H^G(V) - 1)$.  In particular, if $H$ lies in the center of $G$, then the formula above becomes 
\[
d_r \bigl(N_H^G(x)\bigr) \;=\; \mathrm{tr}_H^G\!\bigl(yx^{k-1}\bigr).
\]

\end{thm}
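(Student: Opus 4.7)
The plan is to establish the identity by computing both sides after restriction to the $H$-level spectral sequence, and then lifting the agreement back to the $G$-level using the geometric structure of the norm. Applying the double coset formula \eqref{eq:doublecoset} gives
\[
\res_H^G N_H^G(x) \;=\; x \cdot \prod_{[g]\in T} N_{H\cap H^g}^H(x^g)
\]
in the $H$-level spectral sequence. We then compute $d_r$ of this product via Leibniz. For each $[g]\in T$ with $H\cap H^g \subsetneq H$, the differential on the factor $N_{H\cap H^g}^H(x^g)$ is computed inductively from the same theorem applied to the smaller inclusion $(H\cap H^g)\subset H$. In the complementary case $H\cap H^g = H$ (which forces $H$ to be normal in $G$), we have $d_r(x^g) = y^g$ by naturality under conjugation. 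The resulting Leibniz expansion produces an explicit combination of products involving $y$ and $y^g$ at various positions.

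On the other hand, the Mackey double coset formula
\[
\res_H^G \tr_H^G(z) \;=\; \sum_{[g']\in H\backslash G/H} \tr_{H\cap H^{g'}}^H(z^{g'})
\]
applied to $z = y\cdot\prod_{[g]\in T} N_{H\cap H^g}^H(x^g)$ yields precisely the same sum: the $[g']=[H]$ double coset contributes the $y$-Leibniz term, and the $[g']\in T$ terms reproduce the transfers matching the inductive Leibniz contributions from differentiating the norm factors. Consequently, the formula holds after restriction from the $G$-SS to the $H$-SS.

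The main obstacle is promoting this restriction-level identity to an equality of differentials in the $G$-SS, since $\res_H^G$ need not be injective. The strategy is to realize the differential geometrically via the norm structure. Choose a representing map $\tilde x\colon S^{V+s}\to P_s^H$ whose composite into $P_{s+r}^H/P_{s+r+1}^H$ represents $y$, apply the functor $\mathcal{N}_H^G$ of \cref{defn:normtower}, and compose with the structure morphism $\mathcal{N}_H^G P_\bullet^H \to P_\bullet^G$ to obtain $N_H^G(\tilde x)\colon S^{\Ind_H^G V + ks}\to P_{ks}^G$. The $d_r$-differential of $N_H^G(x)$ is then detected in $P_{ks+r}^G/P_{ks+r+1}^G$. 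Using the $G$-equivariant simplicial description \eqref{eq:simplicial}, the filtration-$(ks+r)$ layer of $\mathcal{N}_H^G P_\bullet^H$ decomposes into $G$-orbits of functions $f\colon G/H\to\mathbb{Z}$ of weight $ks+r$; the orbit that increments a single coordinate by $r$ has stabilizer $H$, and its contribution to the associated graded takes the form
\[
G_+\wedge_H\Bigl(P_{s+r}^H\wedge\bigwedge_{[g]\in T} N_{H\cap H^g}^H\bigl((P_s^H)^g\bigr)\Bigr),
\]
which, evaluated on $\tilde x$, realizes $\tr_H^G(y\cdot\prod_{[g]\in T} N_{H\cap H^g}^H(x^g))$. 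The technical heart of the proof is to verify that this orbit accounts for the entirety of $d_r(N_H^G(x))$, and to check the required compatibilities on the cofiber sequence $P_{s+r}^H\to P_s^H\to P_s^H/P_{s+r}^H$ after applying the norm; this relies precisely on the higher coherences encoded in the morphism $\mathcal{N}_H^G P_\bullet^H \to P_\bullet^G$ of \cref{defn:normcoherent}, going beyond the $r=1$ compatibility used in the proof of the Hill--Hopkins--Ravenel norm differential in \cite[Theorem~4.7]{HHR17}.
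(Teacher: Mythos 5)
Your proposal correctly identifies the conceptual strategy — realize $d_r(N_H^G(x))$ geometrically via the norm structure $\mathcal{N}_H^G P_\bullet^H \to P_\bullet^G$, and observe that the ``single-coordinate increment'' orbit in the simplicial/orbit description of $\mathcal{N}_H^G$ contributes exactly the transfer term. That matches the paper's picture. However, you stop precisely at the point where all the actual work lies: you flag ``the technical heart of the proof is to verify that this orbit accounts for the entirety of $d_r(N_H^G(x))$, and to check the required compatibilities'' and then do not carry it out. This is not a minor verification; it is the entire content of the theorem, and nothing in your outline controls whether other orbits (where several coordinates jump simultaneously) contribute, nor whether the differential has length exactly $r$ rather than something longer or shorter. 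The paper's proof does this work by an explicit skeletal induction on the $G$-CW structure of the cube $N_H^G I$: it constructs, for each $i$, a lift $\phi_i\colon \sk_i N_H^G I \wedge N_H^G S^V \to P^G_{ks+(k-i)(r-1)}$, using at each stage a $3$-morphism coming from the higher coherences of \cref{defn:normcoherent} (this is diagram~(\ref{eq:threecell}) and its analogues), and ultimately lifts $f$ on $\partial N_H^G I$ through $P^G_{ks+2r-2}$, which pins the class down as the stated transfer. Without producing this lift, your argument establishes at most that the transfer class is one plausible representative, not that it is the differential.

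The restriction ``warm-up'' also has a circularity problem: to compute $d_r$ of the norm factors $N_{H\cap H^g}^H(x^g)$ you invoke ``the same theorem applied to the smaller inclusion $H\cap H^g \subset H$,'' but nothing guarantees $H\cap H^g \subsetneq H$, and the case where the formula is most informative (namely $H$ normal, so $T$ indexes genuine conjugates and $H\cap H^g = H$) is exactly the case where this inductive step degenerates. Moreover, even if the circularity were repaired by a careful induction on $[G:H]$, a restriction-level identity never yields the $G$-level differential since $\res_H^G$ is not injective — a point you yourself acknowledge. The paper needs no such warm-up: the cube argument works uniformly for all $H \subset G$ in one pass. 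So in summary, your proposal is a correct high-level sketch, but a genuine gap remains at the step you deferred, and closing it requires something essentially equivalent to the paper's skeletal induction.
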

\begin{proof}
By the construction of spectral sequences, the differential $d_r(x)=y$ can be represented by the following commutative diagram:
\begin{equation}\label{eq:startdiff}
\begin{tikzcd}
S^V \ar[r] \ar[d, "y"]  & I\wedge S^V \ar[r] \ar[d,"\bar{x}"] & S^{1+V} \ar[d, "x"] \\ 
P_{s+r-1}^{H} \ar[r] &P_s^{H}  \ar[r] & P_{s}^{H}/P_{s+r-1}^{H}. 
\end{tikzcd}
\end{equation}
Here, $I$ denotes the interval $[0,1]$ with $0$ as the chosen base point. 
Applying the norm functor $N_{H}^G$ to the above diagram yields:
\[
\begin{tikzcd}
N_{H}^G S^V  \ar[r] \ar[d, "N_{H}^{G} (y)"]  & N_{H}^G I \wedge N_{H}^G S^V \ar[r] \ar[d,"N_H^G (\bar{x})"] & N_{H}^G S^{1+V} \ar[d, "N_{H}^{G}(x)"] \\ 
N_{H}^{G} P_{s+r-1}^{H}  \ar[r] &N_{H}^{G} P_s^{H} \ar[r] & N_{H}^{G}(P_{s}^{H}/P_{s+r-1}^{H}). 
\end{tikzcd}
\]
Note that the two rows above are not cofiber sequences. The cofiber of the map $N_{H}^G I\rightarrow N_H^G S^1 $ is $\Sigma \partial N_H^G I$. Therefore, the cofiber of the map $ N_{H}^G I \wedge N_{H}^G S^V\rightarrow N_{H}^G S^{1+V} $ is $\Sigma\partial N_H^G I\wedge N_H^G S^V$. 
Combining this with the structure map 
$$ N_{H}^G (P^{H}_s /P^{H}_{s+r-1} )\rightarrow P^G_{ks}/P^G_{ks+r-1}$$
from \cref{prop:explicitnorm}, induced by the norm structure on $P_{\bullet}^G$, 
we obtain the following commutative diagram, 
where both the top and bottom rows are cofiber sequences:

\[
\begin{tikzcd}
    \partial N_{H}^G I\wedge N_{H}^G S^V\ar[r]\ar[dd,"f"]& N_{H}^G I\wedge N_{H}^G S^V \ar[r]\ar[d] & N_{H}^G S^{1+V}\ar[d, "N_{H}^{G}(x)"]\\
  & N_{H}^G P^{H}_s \ar[r]\ar[d]& N_{H}^G (P^{H}_s /P^{H}_{s+r-1} )\ar[d]\\
    P^G_{ks+r-1}\ar[r]& P^G_{ks}\ar[r]& P^G_{ks}/P^G_{ks+r-1}.
\end{tikzcd}
\]
 The  long vertical map $f$ is induced by the second and third vertical maps. We will show that $f$ represents the transfer class $\tr_H^G \left(y\prod\limits_{[g]\in T} N_{H\cap H^g}^{H}(x^{g})\right)$ in the spectral sequence. If so, then the commutative diagram above will give the desired $d_r$-differential:
\[
d_r \left(N_{H}^G (x)\right)=\tr_{H}^G \left(y\prod\limits_{[g]\in T} N_{H\cap H^g}^{H}(x^{g})\right).
\] 
 Consider the $G$-CW structure of the cube $N_H^G I$. Let $\sk_i N_H^G I$ denote the $i$-th skeleton of $N_H^G I$. In particular, we have $\partial N_H^G I=\sk_{k-1} N_H^G I$. We claim that there exists a lifting map  
 $$\phi_{k-2}:\sk_{k-2}N_H^G I\wedge N_H^G S^V\rightarrow P^G_{ks+2r-2},$$
 which makes the following diagram commute. 
 \begin{equation}\label{eq:transferdiff}
 \begin{tikzcd}
     \sk_{k-2}N_H^G I\wedge N_H^G S^V \ar[d,"\phi_{k-2}"]\ar[r]& \partial N_H^G I\wedge N_H^G S^V\ar[d,"f"]\ar[r] & {G}_{+}\wedge_H S^{k-1}\wedge N_{H}^G S^V\ar[d]\\
     P^G_{ks+2r-2}\ar[r]& P^G_{ks+r-1}\ar[r,"\pi"] &P^G_{ks+r-1}/P^G_{ks+2r-2}.
      \end{tikzcd}
 \end{equation}
Here, the sphere $S^{k-1}$ is the one-point compactification of the face $I^{k-1}=\bigwedge_{[g]\in G/H-[H]}I^g$ and is equipped with the $H$-action on the indexing set $\{ G/H-[H]\}$. The induced third vertical map identifies the class represented by the composite map $\pi \circ f$.


We construct the lift inductively on  the skeleta of $N_H^G I$. For the base case $i=0$, we have the following maps induced by $N_H^G (y)$ on the $0$-th skeleton:
\[
\phi_0 \colon \sk_0 N_H^G I \wedge N_H^G S^V\xrightarrow{N_H^G (y)}N_H^G P^H_{s+r-1}\rightarrow P^G_{ks+k(r-1)}.
\]
Moreover, the map $\phi_0$ fits into a $3$-morphism as follows:
\begin{equation}\label{eq:threecell}
    \begin{tikzcd}[row sep=small, column sep=large]
      \sk_0 N_H^G I \wedge N_H^G S^V  \ar[rr]\ar[dd,"\phi_0"]\ar[dr] & & N_H^G I\wedge N_H^G S^V \ar[dd] \\
        & \partial N_{H}^G I\wedge N_{H}^G S^V\ar[ur]\ar[dd,"f"]&\\
        P^G_{ks+k(r-1)}\ar[rr,dashrightarrow]\ar[dr]& & P^G_{ks} \\
        &P^G_{ks+r-1}\ar[ur]&\\
        \end{tikzcd}
\end{equation}
This $3$-morphism implies that the following  diagram commutes: 
\begin{equation}\label{eq:diagram}
\begin{tikzcd}
    \sk_0 N_H^G I \wedge N_H^G S^V\ar[r]\ar[d,"\phi_{0}"]&\partial N_H^G I\wedge N_H^G S^V \ar[d,"f"]\\
    P^G_{ks+k(r-1)}\ar[r]&P^G_{ks+r-1}.
    \end{tikzcd}
\end{equation}
For the induction hypothesis, we assume that  for the $i$-th skeleton we have a lifting map $$\phi_{i}: \sk_i N_H^G I \wedge N_H^G S^V\rightarrow P^G_{ks+(k-i)(r-1)}$$ making the following diagram commute:
\[
\begin{tikzcd}
    \sk_i N_H^G I \wedge N_H^G S^V\ar[r]\ar[d,"\phi_{i}"]&\partial N_H^G I\wedge N_H^G S^V \ar[d,"f"]\\
    P^G_{ks+(k-i)(r-1)}\ar[r]&P^G_{ks+r-1}.
    \end{tikzcd}
\]
Now consider the extension from the $i$-th skeleton 
$\sk_i N_H^G I$ to the $(i+1)$-st skeleton 
$\sk_{i+1} N_H^G I$.  We then construct  a map $\bar{\phi}_{i+1}$ such that the following  diagram commutes:
\begin{equation}\label{eq:induction}
\begin{tikzcd}
    \coprod \partial I^{i+1}\wedge N_H^G S^V\ar[d,"\iota"]\ar[r]&\sk_i N_H^G I\wedge N_H^G S^V\ar[d]\ar[r,"\phi_{i}"] &P^G_{ks+(k-i)(r-1)}\ar[dd]\\
    \coprod I^{i+1}\wedge N_H^G S^V\ar[r]\ar[rrd,"\bar{\phi}_{i+1}", bend right=15] &\sk_{i+1}N_H^G I\wedge N_H^G S^V & \\
    &&P^G_{ks+(k-i-1)(r-1)}.
    \end{tikzcd}
\end{equation}

For any face $I^{i+1}$, let $K \subset G$ denote the stabilizer subgroup that fixes this face.  Then the $(i+1)$-st skeleton of $N_H^G I$ contains the orbit $K\backslash G_+ \wedge I^{i+1}$.  Let $$S=\{[g_1],[g_2],\dots, [g_{i+1}]\}$$ be the set of elements in $G/H$ indexing the face $I^{i+1}$ and let $\bar{S}=G/H-S$ denote its complement. By the adjunction, it suffices to construct the $K$-equivariant map  $i^*_K \bar{\phi}_{i+1}$ on the  summand $$\bigwedge_{[g]\in S }I^g\wedge i^*_K N_H^G S^V$$ as the composition of the following maps
\begin{equation*}
\begin{aligned}
\bigwedge\limits_{[g]\in S} I^g\wedge i^*_K N_H^G S^V &\cong \bigwedge_{[g]\in K\backslash S}N_{H^g\cap K}^K(I^{g}\wedge (S^V)^g)\wedge \bigwedge_{[g]\in K\backslash \bar{S}} N_{H^g\cap K}^{K}(S^V)^g\\
&\longrightarrow\bigwedge_{[g]\in K\backslash S}N_{H^g\cap K}^{K}(P_s^{H})^g \wedge \bigwedge_{[g]\in K\backslash \bar{S}}N_{H^g\cap K}^{K}(P_{s+r-1}^H)^g\\
&\longrightarrow i^*_K P^G_{ks+(k-i-1)(r-1)}.
\end{aligned}
\end{equation*}
Here, the first map in the second row above is $$\prod\limits_{[g]\in K\backslash S}N_{H^g\cap K}^{K}(\bar{x}^g)\wedge\prod\limits_{[g]\in K\backslash \bar{S}}N_{H^g\cap K}^{K}(y),$$ and the last map in the second row is induced by the norm structure on $P^G_{\bullet}$. Notice that the above composition map factors through
\[
\bigwedge_{[g]\in K\backslash S}N_{H^g\cap K}^{K}(P_s^{H})^g \wedge \bigwedge_{[g]\in K\backslash \bar{S}}N_{H^g\cap K}^{K}(P_{s+r-1}^H)^g.
\]
We apply the norm functor $N_{H^g\cap K}^{K}$ to the diagram~(\ref{eq:startdiff}) for each coset $[g]$ in $K\backslash S$ and $K\backslash \bar{S}$, then the norm structure on $P_{\bullet}^G$ implies the commutativity of the diagram~(\ref{eq:induction}).
Therefore, it induces a lifting map $$\phi_{i+1}\colon \sk_{i+1}N_H^G I\wedge N_H^G S^V\rightarrow P^G_{ks+(k-i-1)(r-1)}. $$ 
Similar to the previous $3$-morphism argument, we have  the following commutative diagram: 
\[
\begin{tikzcd}
    \sk_{i+1} N_H^G I \wedge N_H^G S^V\ar[r]\ar[d,"\phi_{i+1}"]&\partial N_H^G I\wedge N_H^G S^V \ar[d,"f"]\\
    P^G_{ks+(k-i-1)(r-1)}\ar[r]&P^G_{ks+r-1}.
    \end{tikzcd}
\]
Therefore, the inductive process establishes the claim.

To conclude the proof, we identify the class represented by the composite map 
 $\pi\circ f$ in the spectral sequence. The commutative diagram~(\ref{eq:transferdiff}) shows that the composite map $\pi\circ f$ represents a transfer class.
By the adjunction, the source of the transfer is determined by the $H$-equivariant map $\psi\colon S^{k-1}\wedge i^*_HN_H^G S^V\rightarrow P^H_{ks+r-1}/P^H_{ks+2r-2}$. By the inductive construction above  and the double coset formula~(\ref{eq:doublecoset}), the following composition
 \[
 \bigwedge_{[g]\in G/H-[H]}I^g\wedge i^*_H N_H^G S^V\rightarrow S^{k-1}\wedge i^*_HN_H^G S^V\xrightarrow{\psi} P^H_{ks+r-1}/P^H_{ks+2r-2},
 \] 
represents the class $y\prod\limits_{[g]\in T} N_{H\cap H^g}^{H}(\bar{x}^{g})$. It follows that $\psi$ represents the class $y\prod\limits_{[g]\in T} N_{H\cap H^g}^{H}(x^{g})$. Hence, there is a  $d_r$-differential from $N_H^G (x)$ to $\tr_H^G\left(y\prod\limits_{[g]\in T} N_{H\cap H^g}^{H}(x^{g})\right)$ as claimed.
\end{proof}


\section{Differentials on orientation classes}\label{sec:sharpness}


In this section, we will show that for a $G$-spectrum $X$, both its equivariant slice tower and its homotopy fixed point tower admit norm structures, in the sense of \cref{defn:normcoherent}. When we specialize to the case $X = E_h$ and apply \cref{thm:predicttransfer}, we obtain \cref{thm:D} (\cref{thm:predicteddiff}), which gives a family of differentials on orientation classes. 

Recall that for a $G$-spectrum $X$, its slice tower $P^G_\bullet X$ is defined by first taking $\tau^G_{\geq s}$ to be the localizing subcategory of $G$-spectra generated by all spectra of the form $G_+ \wedge_H S^{m\rho_H}$, where $H$ is a subgroup of $G$ and $m|H| \geq s$.  The functor $P_G^{n-1}(-)$ is the nullification functor which makes $\tau^G_{\geq n}$ acyclic, and $P^G_n X$ is the fiber of the natural map $X \to P_G^{n-1}X$.  The tower 
\[P^G_\bullet X = \{\cdots \to P^G_{n+1}X \to P^G_n X \to P^G_{n-1}X \to \cdots\} \]
is the \textit{slice tower of $X$}. In particular, $P_n^G X$ lies in $\tau_{\geq n}^G$, which means it is slice $n$-connective.





\begin{defn}\rm
    Let $X$ be a $G$-spectrum, and let $H$ be a subgroup of $G$. $X$ has a \textit{norm} from $H$ to $G$ if there exists a $G$-equivariant map $N_H^G i_H^*(X) \to X$.
\end{defn}

\begin{prop}\label[prop]{prop:slicetowernorm}
    Let $X$ be a $G$-spectrum with a norm from $H$ to $G$.  Then its slice tower $P_\bullet^G X$ admits a norm structure from $H$ to $G$ in the sense of \cref{defn:normcoherent}.
\end{prop}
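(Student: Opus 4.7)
The plan is to use functoriality of the slice cover construction together with the given norm map to build the required morphism in $\SpFilG$. By the universal property of the colimit in \cref{defn:normtower}, constructing a morphism $\mathcal{N}_H^G i_H^* P_\bullet^G X \to P_\bullet^G X$ reduces to producing, for each $s$ and each $f \in I_{\geq s}$, a $G$-equivariant map from the summand of the simplicial realization \eqref{eq:simplicial} indexed by $f$ to $P_s^G X$, coherently compatible with the morphisms in $I_{\geq s}$, with the $G$-action on the indexing poset, and with the structure maps $P_s^G X \to P_{s-1}^G X$. For each $f$, with stabilizer $K \subset G$, I would define the map as the composition
\[
G_+ \wedge_K \bigwedge_{[g] \in G/H} i_H^* P^G_{f([g])} X \;\longrightarrow\; N_H^G i_H^* X \;\longrightarrow\; X,
\]
where the first arrow is built from the slice cover maps $P_n^G X \to X$ restricted to $H$ and organized $G$-equivariantly via the induction-norm structure, and the second arrow is the given norm on $X$.

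The key technical input is that this composition factors through $P_s^G X$. Since $P^G_{f([g])} X$ lies in $\tau_{\geq f([g])}^G$ by definition, its restriction $i_H^* P^G_{f([g])} X$ lies in $\tau_{\geq f([g])}^H$ (restriction preserves slice connectivity, cf.\ \cite[Corollary~4.37]{HHR16}). By the multiplicativity of slice connectivity under norms (\cite[Proposition~4.40]{HHR16}), the resulting $G$-spectrum $G_+ \wedge_K \bigwedge_{[g]} i_H^* P^G_{f([g])} X$ lies in $\tau_{\geq |f|}^G$, where $|f| = \sum_{[g]} f([g])$. Since $|f| \geq s$ for $f \in I_{\geq s}$, the universal property of the slice cover $P_s^G X \to X$ yields the required factorization. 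Taking the colimit over $I_{\geq s}$ then assembles these individual maps into the desired map at filtration level $s$.

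The main obstacle is promoting this level-wise construction into a genuine morphism in the $\infty$-category $\SpFilG$ in the sense of \cref{defn:normcoherent}, that is, supplying all the higher coherence data rather than just maps up to homotopy. I would address this by observing that the slice cover functor $P_\bullet^G(-)$ is functorial in $\SpG$ and takes values in $\SpFilG$; applying it to the given norm $N_H^G i_H^* X \to X$ produces a morphism $P_\bullet^G(N_H^G i_H^* X) \to P_\bullet^G X$ in $\SpFilG$ with all required higher coherence. The slice-connectivity estimate above then refines into a canonical morphism $\mathcal{N}_H^G i_H^* P_\bullet^G X \to P_\bullet^G(N_H^G i_H^* X)$ in $\SpFilG$, obtained by observing that each wedge summand in the simplicial model of $\mathcal{N}_H^G i_H^* P_\bullet^G X$ maps functorially to the appropriate filtration level of $P_\bullet^G(N_H^G i_H^* X)$. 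Composing these two morphisms yields the norm structure, with coherence inherited from the functoriality of the slice tower and the universal property used at each stage.
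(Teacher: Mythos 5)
Your proof is correct and takes essentially the same approach as the paper: both arguments hinge on showing that each level $\mathcal{N}_H^G i_H^*(P_\bullet^G X)(s)$ is slice $s$-connective (via the multiplicativity of slice connectivity under norms, combined with the fact that each term of the defining simplicial diagram has the form $G_+ \wedge_K N_H^K Y$ for a slice-connective $H$-spectrum $Y$), and then invoking the universal/coreflective property of the slice cover functor to produce a coherent factorization of a map of filtered spectra. The only difference is stylistic: the paper constructs the map $\mathcal{N}_H^G i_H^*(P_\bullet^G X) \to \mathcal{N}_H^G i_H^*(c_\bullet(X)) \to c_\bullet(X)$ and factors it directly through $P_\bullet^G X$, whereas you route through the intermediate filtered spectrum $P_\bullet^G(N_H^G i_H^* X)$, first factoring the comparison map through the slice tower of $N_H^G i_H^* X$ and then postcomposing with $P_\bullet^G$ applied to the norm on $X$.
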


\begin{proof}
Let $c_{\bullet}(X)$ denote the constant tower of $X$. Note that $\mathcal{N}_H^G i^*_H(c_{\bullet}(X))\simeq c_{\bullet}(N_H^G i^*_H X)$. The norm on $X$ gives a morphism  $c_{\bullet}(N_H^G i^*_H X) \rightarrow c_{\bullet}(X)$. This implies that the constant tower $c_{\bullet}(X)$ admits a norm structure from $H$ to $G$.  Combining this with the map of towers $P_\bullet^GX \to c_{\bullet}(X)$ gives the composite map 
\[
\mathcal{N}_H^Gi_H^*(P_{\bullet}^G X)\rightarrow \mathcal{N}_H^Gi_H^*(c_{\bullet}(X))\rightarrow c_{\bullet}(X).
\]

To construct the desired map $\mathcal{N}_H^Gi_H^*(P_{\bullet}^G X) \to P_\bullet^G X$, we will show that for each integer $s$, $\mathcal{N}_H^Gi_H^*(P_{\bullet}^G X)(s)$ is slice $s$-connective.  This is because each $G$-spectrum in the simplicial diagram (\ref{eq:simplicial}) for computing $\mathcal{N}_H^G(P_{\bullet}^G X)(s)$ is slice $s$-connective.  More specifically, by the discussion before \cref{defn:normtower}, the simplices in each dimension can be organized into wedges of $G$-spectra of the form $G_+\wedge_K N_H^K Y$, where $Y$ is a slice $H$-connective $H$-spectrum.  Therefore, $N_H^K Y$ is a slice $s$-connective $K$-spectra (\cite[Chapter~\rom{1}, Corollary~5.8]{Ull13} and \cite[Theorem~2.5]{HY18}). Therefore, the $G$-spectrum $G_+\wedge_K N_H^K Y$ is also slice $s$-connected. 

By the universal property of the slice tower, we have a map
\[
\mathcal{N}_H^G i_H^* (P_{\bullet}^G X)\rightarrow P_{\bullet}^G X.
\]
This gives the norm structure of the slice tower $P_{\bullet}^G X$ from $H$ to $G$. 
\end{proof}

\begin{rem}\rm
    As noted in \cref{rem:global}, one may simultaneously consider all subgroups of $G$ to formulate a global version of \cref{prop:slicetowernorm}, as in \cite[Theorem~1.2]{Car25}.
    \end{rem}

We will denote $F_\bullet^G(X) := F(EG_+, P_\bullet^G X)$. This is the \textit{homotopy fixed point tower}, and its associated spectral sequence is the homotopy fixed point spectral sequence of $X$. 







\begin{prop}\label[prop]{prop:HFPSSnorm}
Let $X$ be a $G$-spectrum with a norm from $H$ to $G$.  Then its homotopy fixed point tower $F_\bullet^GX$ admits a norm structure from $H$ to $G$. 
\end{prop}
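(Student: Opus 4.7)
The plan is to transport the norm structure on the slice tower (provided by \cref{prop:slicetowernorm}) across the cofree completion $F(EG_+, -)$. By \cref{prop:slicetowernorm}, there is a morphism
\[
\mathcal{N}_H^G i_H^*(P_\bullet^G X) \longrightarrow P_\bullet^G X
\]
in $\SpFilG$. Since $F(EG_+, -)$ extends to a levelwise functor $\SpFilG \to \SpFilG$, postcomposition with this norm structure yields
\[
F(EG_+, \mathcal{N}_H^G i_H^*(P_\bullet^G X)) \longrightarrow F(EG_+, P_\bullet^G X) = F_\bullet^G X.
\]
What remains is to construct a natural comparison $\mathcal{N}_H^G i_H^*(F_\bullet^G X) \to F(EG_+, \mathcal{N}_H^G i_H^*(P_\bullet^G X))$ in $\SpFilG$ and compose.

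To construct the comparison, I would combine two ingredients. First, because $i_H^* EG$ is a contractible free $H$-space, it is $H$-equivalent to $EH$; the resulting $H$-map $i_H^* EG_+ \to EH_+$ passes through the $(i_H^*, N_H^G)$-adjunction to yield a $G$-equivariant diagonal $EG_+ \to N_H^G(EH_+)$. Second, the functor $F(EG_+, -)$ is lax symmetric monoidal, which provides a natural transformation
\[
\bigwedge_{[g] \in G/H} F(EH_+, Y) \longrightarrow F\!\left(\bigwedge_{[g] \in G/H} EH_+,\; N_H^G Y\right)
\]
for $H$-spectra $Y$. Precomposing the target with the diagonal and identifying the source with $N_H^G F(EH_+, Y)$ (under the twisted $G$-action on the smash product indexed by $G/H$), I obtain the desired natural transformation
\[
N_H^G F(EH_+, Y) \longrightarrow F(EG_+, N_H^G Y).
\]
Applying this termwise in the simplicial diagram (\ref{eq:simplicial}) that defines $\mathcal{N}_H^G$, together with the identification $i_H^* F(EG_+, -) \simeq F(EH_+, i_H^*(-))$ coming from $i_H^* EG_+ \simeq EH_+$, assembles the required morphism of filtered $G$-spectra, which I then compose with the map built from \cref{prop:slicetowernorm} to finish.

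The main obstacle will be verifying coherence: the lax monoidal transformation and the diagonal $EG_+ \to N_H^G(EH_+)$ must assemble into a map of $\infty$-categorical diagrams indexed by $I_{\geq s}$ so that passage to colimits yields a genuine morphism in $\SpFilG$ compatible with the structure maps $P_s \to P_{s-1}$. As in the proof of \cref{prop:slicetowernorm}, this compatibility should reduce to formal naturality once the lax monoidal structure of $F(EG_+, -)$ is upgraded to an $\infty$-categorical level, at which point the universal property defining $\mathcal{N}_H^G$ yields the comparison map directly.
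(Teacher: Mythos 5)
Your reduction is the same as the paper's: postcompose with the slice-tower norm structure from \cref{prop:slicetowernorm}, so that everything comes down to constructing a comparison map $\mathcal{N}_H^G i_H^*(F_\bullet^G X) \to F(EG_+, \mathcal{N}_H^G i_H^*(P_\bullet^G X))$ in $\SpFilG$. But your construction of that comparison map diverges substantially from the paper's, and in a way that leaves genuine gaps.

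The paper does not construct the comparison ``by hand'' at all. It observes that the levelwise map $P_\bullet^H X \to F(EH_+, P_\bullet^H X)$ is an underlying equivalence, hence so is $\mathcal{N}_H^G P_\bullet^H X \to \mathcal{N}_H^G F(EH_+, P_\bullet^H X)$, and therefore the induced map
\[
g \colon F\bigl(EG_+,\, \mathcal{N}_H^G P_\bullet^H X\bigr) \longrightarrow F\bigl(EG_+,\, \mathcal{N}_H^G F(EH_+, P_\bullet^H X)\bigr)
\]
becomes a $G$-equivalence after cofree completion. Combining $g^{-1}$ with the cofree-completion unit $f \colon \mathcal{N}_H^G F(EH_+, P_\bullet^H X) \to F(EG_+, \mathcal{N}_H^G F(EH_+, P_\bullet^H X))$ gives the comparison immediately, and all coherence is inherited from functoriality. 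You miss this inversion trick and instead try to build the comparison directly out of a diagonal $EG_+ \to N_H^G(EH_+)$ and lax monoidality of a function-spectrum functor.

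That direct route has two concrete problems beyond the coherence concern you flag (which you do not resolve). First, the norm $N_H^G$ on spectra is not right adjoint to $i_H^*$; the right adjoint is coinduction $F_H(G_+, -)$. Your ``diagonal'' does exist because $N_H^G$ of a suspension spectrum of a based space agrees with the suspension spectrum of the coinduced space, but invoking an ``$(i_H^*, N_H^G)$-adjunction'' is not correct and the argument needs to route through coinduction. Second, and more seriously, the terms of the diagram \eqref{eq:simplicial} defining $\mathcal{N}_H^G$ are not of the form $N_H^G Y$ for an $H$-spectrum $Y$; a typical orbit of summands has the form $G_+ \wedge_K N_H^K Y$ for various proper subgroups $K \subsetneq G$. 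So your proposed ``termwise'' application of the map $N_H^G F(EH_+, Y) \to F(EG_+, N_H^G Y)$ does not literally apply to the summands, and you would have to build compatible maps for every stabilizer $K$ and verify they assemble. The paper's argument avoids all of this by never touching the internal structure of $\mathcal{N}_H^G$ and simply exploiting that $\mathcal{N}_H^G$ preserves underlying equivalences and that $F(EG_+,-)$ turns those into $G$-equivalences.
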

\begin{proof}

We will construct a map 
\[\mathcal{N}_H^G i_H^*(F_\bullet^G X) = \mathcal{N}_H^G F(EH_+, P_\bullet^H X)\longrightarrow F\left(EG_+, \mathcal{N}_H^G P_\bullet^H X\right).\]
The desired norm structure map is obtained by postcomposing this map with
\[F\left(EG_+, \mathcal{N}_H^G P_\bullet^H X\right) \longrightarrow F(EG_+, P_\bullet^G X) = F_\bullet^G X,\]
which is obtained from \cref{prop:slicetowernorm}.

Consider the following diagram

\[
\begin{tikzcd}
\mathcal{N}_H^G F(EH_+, P_\bullet^H X)\ar[r,"f"]\ar[rd,dashed] &F\left(EG_+, \mathcal{N}_H^G F(EH_+,P_\bullet^H X)\right)\\
& F(EG_+, \mathcal{N}_H^G P^H_\bullet X)\ar[u,"g", "\simeq" swap]
\end{tikzcd}
\]
where $f$ is induced by the functor $F(EG_+, -) \colon \SpFilG \rightarrow \SpFilG$ and $g$ is induced by the map
$P^H_\bullet X \to F(EH_+,P_\bullet^H X)$.  Note that $P^H_\bullet X \rightarrow F(EH_+, P^H_\bullet X)$ is an underlying equivalence, and therefore so is $\mathcal{N}_H^G P^H_\bullet X \rightarrow \mathcal{N}_H^G F(EH_+, P^H_\bullet X)$. After applying $F(EG_+, -)$, the map $g$ is an equivalence, and $g^{-1}f$ gives the desired map from  $\mathcal{N}_H^G F(EH_+, P_\bullet^H X)$ to $F(EG_+,\mathcal{N}_H^G P_\bullet^G X)$.
\end{proof}

As discussed in \cref{sec:intP2}, the Lubin--Tate theory $E_h$ is an $\mathbb{E}_{\infty}$-ring spectrum with $G$-actions by $\mathbb{E}_{\infty}$-maps \cite{GH04, Lur18}, which, after cofree completion, gives rise to an object in $\SpG$ with all norms \cite[Theorem~2.4]{HM17}.  By \cref{prop:slicetowernorm} and \cref{prop:HFPSSnorm}, the slice tower and the homotopy fixed point tower of $E_h$ both admit norm structures from $H$ to $G$ for any subgroup $H \subset G$.  Therefore, we can apply \cref{thm:predicttransfer} to analyze differentials in the corresponding spectral sequences.

\begin{thm}[Differentials on Orientation Classes]\label[thm]{thm:predicteddiff} \,
\begin{enumerate}
\item When $(h, G) = (2^{n-1}m, \Cn)$, we have the following differentials in the $\Cn$-slice spectral sequence and the $\Cn$-homotopy fixed point spectral sequence of $E_h$: 
\[d_{2^{\ell+1}-1} \left(u_{2^{\ell+n-2}\lambda_{n-1}}\right)=\tr_{C_2}^{C_{2^{n}}}\left(\bar{v}_{\ell} a_{(2^{\ell+1}-1)\sigma_2}u_{(2^{\ell+n-1}-2^\ell )\sigma_2}\right)u_{W_{\ell}}, \,\,\, 1\leq \ell \leq h,\]
where $W_{\ell} = 2^\ell \sigma_{2^{n}}+2^\ell \lambda_1+2^{\ell+1}\lambda_2 + \cdots + 2^{\ell+n-3}\lambda_{n-2}.$
\item When $(h, G) = (4m+2, Q_8)$, we have the following differentials in the $Q_8$-homotopy fixed point spectral sequence of $E_h$: 
\[d_{2^{\ell+1}-1} \left(u_{2^\ell\mathbb{H}}\right)=\tr_{C_2}^{Q_8}\left(\bar{v}_{\ell} a_{(2^{\ell+1}-1)\sigma_2}u_{3 \cdot 2^{\ell} \sigma_2}\right)u_{2^\ell \sigma_i} u_{2^\ell \sigma_j}u_{2^\ell \sigma_k}, \,\,\, 1\leq \ell \leq h.\]
\end{enumerate}
\end{thm}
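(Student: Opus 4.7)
The plan is to reduce both parts to a single $C_2$-level slice differential and then apply the Norm-Transfer Differentials Theorem (\cref{thm:predicttransfer}) with $H = C_2$ realized as the center of $G$. By \cref{prop:slicetowernorm} and \cref{prop:HFPSSnorm}, both the $G$-slice tower and the $G$-homotopy fixed point tower of $E_h$ admit norm structures from $C_2$ to $G$, since $E_h$ is a cofree $G$-$\mathbb{E}_\infty$-ring spectrum equipped with all norms. The base $C_2$-level input comes from the Hill--Hopkins--Ravenel Slice Differentials Theorem \cite[Theorem~9.9]{HHR16}, transported to $E_h$ along the Real orientation of \cite{HS20}: for each $1 \leq \ell \leq h$,
\[
d_{2^{\ell+1}-1}\bigl(u_{2^\ell\sigma_2}\bigr) \;=\; \bar{v}_\ell\, a_{\sigma_2}^{2^{\ell+1}-1},
\]
valid both in the $C_2$-slice spectral sequence and in the $C_2$-homotopy fixed point spectral sequence of $i_{C_2}^{*}E_h$.

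Since $C_2$ lies in the center of $G$ in both cases (the group $\Cn$ is abelian, and $C_2 = \{\pm 1\}$ is the center of $Q_8$), \cref{thm:predicttransfer} specializes to $d_r(N_{C_2}^{G} x) = \tr_{C_2}^{G}(y\,x^{k-1})$ with $k = [G:C_2]$. Applied to $x = u_{2^\ell\sigma_2}$ and $y = \bar{v}_\ell a_{\sigma_2}^{2^{\ell+1}-1}$, this yields
\[
d_{2^{\ell+1}-1}\bigl(N_{C_2}^{G}u_{2^\ell\sigma_2}\bigr) \;=\; \tr_{C_2}^{G}\bigl(\bar{v}_\ell\, a_{(2^{\ell+1}-1)\sigma_2}\, u_{2^{\ell}(k-1)\sigma_2}\bigr),
\]
with $2^\ell(k-1) = 2^{\ell+n-1}-2^\ell$ for $G = \Cn$ and $3\cdot 2^\ell$ for $G = Q_8$. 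These exponents match precisely the classes appearing inside the transfer in the statement of the theorem.

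The remaining step is to identify $N_{C_2}^{G}u_{2^\ell\sigma_2}$ with the stated source orientation class. Using the induction formulas $\ind_{C_2}^{\Cn}(\sigma_2) = 2^{n-2}\lambda_{n-1}$ and $\ind_{C_2}^{\Cn}(1) = 1 + \sigma_{2^n} + \lambda_1 + 2\lambda_2 + \cdots + 2^{n-3}\lambda_{n-2}$ for the cyclic case, and $\ind_{C_2}^{Q_8}(\sigma_2) = \mathbb{H}$, $\ind_{C_2}^{Q_8}(1) = 1 + \sigma_i + \sigma_j + \sigma_k$ for the quaternionic case, a direct $RO(G)$-stem computation shows that
\[
u_{2^{\ell+n-2}\lambda_{n-1}} \;=\; N_{C_2}^{\Cn}u_{2^\ell\sigma_2}\cdot u_{W_\ell}, \qquad u_{2^\ell\mathbb{H}} \;=\; N_{C_2}^{Q_8}u_{2^\ell\sigma_2}\cdot u_{2^\ell\sigma_i}u_{2^\ell\sigma_j}u_{2^\ell\sigma_k},
\]
because any two orientation classes in the same $RO(G)$-degree that both restrict to $1$ nonequivariantly must coincide. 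Applying the Leibniz rule to $d_{2^{\ell+1}-1}$ on these product decompositions and substituting the norm-transfer differential then produces the formula asserted in the theorem.

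The hard part is verifying that the correction factors $u_{W_\ell}$ and $u_{2^\ell\sigma_i}u_{2^\ell\sigma_j}u_{2^\ell\sigma_k}$ are $d_r$-cycles for every $r \leq 2^{\ell+1}-1$, so that the Leibniz rule retains only the norm-transfer contribution. For the $Q_8$-factors this can be established by restricting to the index-$2$ subgroups $C_4\langle i\rangle, C_4\langle j\rangle, C_4\langle k\rangle$ of $Q_8$ and applying the base $C_2$-slice differential together with the Leibniz doubling out of $2^\ell$-th powers, which pushes the first potential differential well beyond page $2^{\ell+1}-1$. The individual factors $u_{2^\ell\sigma_{2^n}}$ and $u_{2^{\ell+i-1}\lambda_i}$ of $u_{W_\ell}$ are handled analogously by restricting to the intermediate subgroup $C_{2^{i+1}}$ through which $\lambda_i$ factors, combined with the $RO(G)$-periodicities of \cref{theorem:ROGperiodicity}, which constrain the possible targets of any hypothetical earlier differential.
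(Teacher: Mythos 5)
Your top-level strategy matches the paper exactly: start from the $C_2$-level differential $d_{2^{\ell+1}-1}(u_{2^\ell\sigma_2}) = \bar{v}_\ell a_{(2^{\ell+1}-1)\sigma_2}$, apply \cref{thm:predicttransfer} with $H = C_2$ central in $G$, identify $N_{C_2}^{G}(u_{2^\ell\sigma_2})$ via the HHR formula $N_H^G(u_V) = u_{\ind_H^G V}/u_{\ind_H^G|V|}$, and then clear denominators by the Leibniz rule. Up through these steps the proposal is sound.

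The gap is in the step you yourself flag as ``the hard part'': establishing that the denominator factors $u_{W_\ell}$ (for $\Cn$) and $u_{2^\ell\sigma_i}u_{2^\ell\sigma_j}u_{2^\ell\sigma_k}$ (for $Q_8$) are $d_{2^{\ell+1}-1}$-cycles. Your proposed method --- restricting to index-$2$ subgroups of $Q_8$, or to $C_{2^{i+1}}$ for the $\lambda_i$ factors, and invoking ``Leibniz doubling out of $2^\ell$-th powers'' together with the $RO(G)$-periodicities --- does not actually establish this. Restriction takes $d_r$-cycles to $d_r$-cycles, so knowing the \emph{restriction} of a class is a $d_r$-cycle is vacuous and cannot rule out a differential at the $G$-level unless the restriction is also injective on the relevant target groups, which you have not checked and which typically fails. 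Moreover, $\lambda_i$ factors through the quotient $C_{2^{i+1}}$ of $\Cn$, not a subgroup, so the relationship to the $C_{2^{i+1}}$-spectral sequence is via the transchromatic isomorphism rather than restriction; invoking restriction to an ``intermediate subgroup'' here is not a well-posed move. Finally, the $RO(G)$-periodicities of \cref{theorem:ROGperiodicity} translate gradings but do not constrain which filtrations can host differential targets.

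The paper closes this gap differently: since the norm-transfer formula sends a $d_r$-cycle $x$ to $\tr_H^G(0\cdot\prod N(x^g)) = 0$, norms of $d_r$-cycles are $d_r$-cycles. For $\Cn$ the paper runs an induction on $n$: assuming $u_{2^{\ell+i-2}\lambda_{i-1}}$ is a $d_{2^{\ell+1}-1}$-cycle at the $\Cnminusone$-level (base case from the Slice Differentials Theorem applied to $u_{2^\ell\sigma_{2^n}}$), the norm $N_{\Cnminusone}^{\Cn}(u_{2^{\ell+i-2}\lambda_{i-1}}) = u_{2^{\ell+i-1}\lambda_i}/u_{2^{\ell+i-1}\sigma_{2^n}}$ is a $d_{2^{\ell+1}-1}$-cycle, and dividing out the already-controlled factor $u_{2^{\ell+i-1}\sigma_{2^n}}$ handles $u_{2^{\ell+i-1}\lambda_i}$. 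For $Q_8$ the paper observes the identity $N_{C_4\langle i\rangle}^{Q_8}(u_{2^\ell\sigma_4}) N_{C_4\langle j\rangle}^{Q_8}(u_{2^\ell\sigma_4}) N_{C_4\langle k\rangle}^{Q_8}(u_{2^\ell\sigma_4}) = u_{2^\ell\sigma_i}u_{2^\ell\sigma_j}u_{2^\ell\sigma_k}$, so the product is a $d_{2^{\ell+1}-1}$-cycle all at once without controlling the individual factors $u_{2^\ell\sigma_i}$ (which need not separately be $d_r$-cycles). You should replace your restriction-based sketch with these norm arguments.
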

\begin{proof}
By \cref{thm:predicttransfer}, when $H\subset G$ is central,  the norm-transfer  differential take a differential $d_r(x) = y$ on the $H$-level and produces the differential
\begin{equation}\label{eq:cyclicnorm}
d_r (N_H^G (x))=\tr_H^G (y x^{k-1}),
\end{equation}
on the $G$-level, where $k=|G/H|$.  

To prove (1), note that we have the following differentials in the $C_2$-SliceSS and the $C_2$-HFPSS of $E_h$ (\cite[Theorem~1.4]{HS20}):
    \[
    d_{2^{\ell+1}-1} (u_{2^\ell\sigma_2})=\bar{v}_{\ell} a_{(2^{\ell+1}-1)\sigma_2}, \,\,\, 1\leq \ell \leq h. 
    \]
This gives the desired formula when $n = 1$.  For $n \geq 2$, since $C_2 \subset \Cn$ is central, applying \cref{thm:predicttransfer} to these differentials with $H=C_2$ and $G=C_{2^n}$ produces the following differentials in the $\Cn$-SliceSS and the $\Cn$-HFPSS of $E_h$: 
\[d_{2^{\ell+1}-1} \left(N_H^G(u_{2^\ell\sigma_2})\right)=\tr_{C_2}^{C_{2^{n}}}\left(\bar{v}_{\ell} a_{(2^{\ell+1}-1)\sigma_2}u_{(2^{\ell+n-1}-2^\ell )\sigma_2}\right), \,\,\, 1 \leq \ell \leq h.\]

Recall that for $V$ an $H$-representation, the norm of the orientation class $u_V$ is given by the formula $N_H^G(u_V)=u_{\ind_H^G V}/u_{\ind_H^G |V|}$ \cite[Lemma~3.13]{HHR16}. Therefore, we have 
\[
N_H^G(u_{2^{\ell}\sigma_2})=\frac{u_{2^{\ell+n-2}\lambda_{n-1}}}{u_{2^\ell\sigma_{2^n}}u_{2^\ell\lambda_1}u_{2^{\ell+1}\lambda_2}\cdots u_{2^{\ell+(n-3)}\lambda_{n-2}}}.
\]
To finish the proof, it suffices to show that the factors in the denominator, namely 
\[u_{2^\ell\sigma_{2^n}},  u_{2^\ell\lambda_1}, u_{2^{\ell+1}\lambda_2}, \ldots, u_{2^{\ell+(n-3)}\lambda_{n-2}},\] 
are all $d_{2^{\ell+1} -1}$-cycles.  We will do so by using induction on $n$.  The base case, when $n = 2$, is immediate, because we only have the class $u_{2^{\ell} \sigma_{4}}$, which is a $d_{2^{\ell+1} -1}$-cycle by the Slice Differentials Theorem \cite[Theorem~9.9]{HHR16}.  For the inductive step, suppose that at the $\Cnminusone$-level, the classes 
\[u_{2^\ell\sigma_{2^{n-1}}},  u_{2^\ell\lambda_1}, u_{2^{\ell+1}\lambda_2}, \ldots, u_{2^{\ell+(n-4)}\lambda_{n-3}}\] 
are all $(d_{\ell+1}-1)$-cycles.  At the $\Cn$-level, the class $u_{2^\ell\sigma_{2^n}}$ is a $d_{2^{\ell+1} -1}$-cycle again by the Slice Differentials Theorem.  To prove that the class $u_{2^{\ell+i-1}\lambda_{i}}$ is a $d_{2^{\ell+1} -1}$-cycle for $1 \leq i \leq n-2$, consider the class $u_{2^{\ell+i-2}\lambda_{i-1}}$ at the $\Cnminusone$-level.  By the induction hypothesis, this is a $d_{2^{\ell+1} -1}$-cycle.  Applying \cref{thm:predicttransfer} on the class $u_{2^{\ell+i-2}\lambda_{i-1}}$ from $H = \Cnminusone$ to $G = \Cn$, we deduce that the class 
\[N_{\Cnminusone}^{\Cn} \left(u_{2^{\ell+i-2}\lambda_{i-1}} \right) = \frac{u_{2^{\ell+i-1}\lambda_i}}{u_{2^{\ell+i-1}}u_{2^{\ell+i-1} \sigma_{2^n}}} = \frac{u_{2^{\ell+i-1}\lambda_i}}{u_{2^{\ell+i-1} \sigma_{2^n}}}\]
is also a $d_{2^{\ell+1} -1}$-cycle.  Since the denominator $u_{2^{\ell+i-1} \sigma_{2^n}}$ is already a $d_{2^{\ell+1} -1}$-cycle, the class $u_{2^{\ell+i-1}\lambda_i}$ is a $d_{2^{\ell+1} -1}$-cycle.  This completes the induction step and proves that each factor in 
$$u_{W_\ell} :=u_{2^\ell\sigma_{2^n}}u_{2^\ell\lambda_1}u_{2^{\ell+1}\lambda_2}\cdots u_{2^{\ell+(n-3)}\lambda_{n-2}}$$
is $d_{2^{\ell+1} -1}$-cycle. It follows that we have the differential 
\[d_{2^{\ell+1}-1} \left(u_{2^{\ell+n-2}\lambda_{n-1}}\right)=\tr_{C_2}^{C_{2^{n}}}\left(\bar{v}_{\ell} a_{(2^{\ell+1}-1)\sigma_2}u_{(2^{\ell+n-1}-2^\ell )\sigma_2} \right)u_{W_\ell}, \,\,\, 1 \leq \ell \leq h.\]
This proves (1).

For $(2)$, applying \cref{thm:predicttransfer} for $H = C_2$, $G = Q_8$ on the $C_2$-differentials 
\[
d_{2^{\ell+1}-1} (u_{2^\ell\sigma_2})=\bar{v}_{\ell} a_{(2^{\ell+1}-1)\sigma_2}, \,\,\, 1\leq \ell \leq h 
\]
produces the $Q_8$-differentials 
\[d_{2^{\ell+1}-1} \left(N_{C_2}^{Q_8}(u_{2^\ell \sigma_2})\right)=\tr_{C_2}^{Q_8}\left(\bar{v}_{\ell} a_{(2^{\ell+1}-1)\sigma_2}u_{3 \cdot 2^{\ell} \sigma_2}\right), \,\,\, 1\leq \ell \leq h.\]
We have 
\[
N_{C_2}^{Q_8}(u_{2^\ell \sigma_2})
  = \frac{u_{2^\ell \mathbb{H}}}{u_{2^\ell \sigma_i} u_{2^\ell \sigma_j} u_{2^\ell \sigma_k}}.
\]
Since the class $u_{2^\ell\sigma_4}$ is a $d_{2^{\ell+1}-1}$-cycle in the $C_4$-HFPSS for $E_h$, so is the product 
\[
N_{C_4\langle i\rangle}^{Q_8}(u_{2^\ell\sigma_4})N_{C_4\langle j\rangle}^{Q_8}(u_{2^\ell\sigma_4})N_{C_4\langle k\rangle}^{Q_8}(u_{2^\ell\sigma_4})= \frac{u_{2^\ell \sigma_j}u_{2^\ell \sigma_k}}{u_{2^\ell \sigma_i}} \cdot \frac{u_{2^\ell \sigma_i}u_{2^\ell \sigma_k}}{u_{2^\ell \sigma_j}} \cdot \frac{u_{2^\ell \sigma_i}u_{2^\ell \sigma_j}}{u_{2^\ell \sigma_k}} = u_{2^\ell \sigma_i} u_{2^\ell \sigma_j} u_{2^\ell \sigma_k}.
\]
The desired differential now follows immediately from applying the Leibniz rule.
\end{proof}

\begin{rem}\rm
When $(h, G) = (2^{n-1}m, \Cn)$, \cref{thm:predicteddiff} gives differential formulas for powers of the top orientation class $\lambda_{n-1}$. The differentials for powers of the other orientation classes $\lambda_{n-i}$, where $2 \leq i \leq n$, follow from the same formula after setting $(h, G) = (2^{n-i}m, C_{2^{n-i+1}})$ (so that $\lambda_{n-i}$ becomes the top orientation class) and then applying the Transchromatic Isomorphism Theorem \cite[Theorem~A]{MSZ24}. Therefore, \cref{thm:predicteddiff} gives differential formulas for powers of every orientation class $\lambda_{n-i}$, for $1 \leq i \leq n$.
\end{rem}



\begin{rem}\rm
Recall that having a differential $d_r(x)=y$ implies the following:
\begin{enumerate}
\item The class $x$ is nontrivial on the $\mathcal{E}_r$-page.
\item The class $y$ is a permanent cycle, and must be killed on or before the $\mathcal{E}_r$-page.
\end{enumerate}
Furthermore, if $y$ is nontrivial on the $\mathcal{E}_r$-page, then the differential $d_r(x)=y$ is called \textit{essential}. In practice, it is possible that the class $y$ is already killed by a shorter differential before the $\mathcal{E}_r$-page, so the differential $d_r(x)=y$ becomes trivial on the $\mathcal{E}_r$-page. For example, \cite{DKLLW24} shows that in the $Q_8$-$\HFPSS(E_2)$, the class $D^{-2}g^7$ is killed by a $d_{23}$-differential, even though the Hill--Hopkins--Ravenel norm differential~(\ref{eq:HHRnorm}) proves that it is the target of a $d_{25}$-differential.
\end{rem}

\begin{example}\label[example]{exam:dseven}\rm
    In the $C_2$-$\HFPSS(E_2)$, there is a $d_7$-differential
    \[
    d_7(u_{4\sigma_2})=\bar{v}_2 a_{7\sigma_2}. 
    \]
 Applying the Hill--Hopkins--Ravenel norm-differential formula~(\ref{eq:HHRnorm}) with $H = C_2$ and $G = C_4$ to this $d_7$-differential produces the following $d_{13}$-differential in the $C_4$-$\HFPSS$:
\[
d_{13}(u_{4\lambda_1}a_{\sigma_4})= N_{C_2}^{C_4}(\bar{v}_2) u_{4\sigma_4}a_{7\lambda_1}.
\]
Applying \cref{thm:predicteddiff} with $H = C_2$ and $G =C_4$ to the this $d_7$-differential produces the following $d_7$-differential in the $C_4$-$\HFPSS$:
    \[
    d_7(u_{4\lambda_1})= \tr_{C_2}^{C_4}(\bar{v}_2 a_{7\sigma_2}u_{4\sigma_2})u_{4\sigma_4}=\tr_{C_2}^{C_4}(\bar{v}_2 a_{\sigma_2})u_{4\sigma_4}u_{2\lambda_1}a_{3\lambda_1}.
    \]
Both of these differentials are essential \cite{HHR17,HSWX23}. 

Alternatively, if we apply \cref{thm:predicteddiff} with $H = C_2$ and $G =Q_8$ to the same $d_7$-differential, we obtain a $d_7$-differential on $u_{4\mathbb{H}}$ in the $Q_8$-HFPSS:
\[
d_7(u_{4\mathbb{H}})=\tr_{C_2}^{Q_8}(\bar{v}_2u_{12\sigma_2}a_{7\sigma_2})u_{4\sigma_i}u_{4\sigma_j}u_{4\sigma_k}.
\]
This $d_7$-differential is essential, and it induces the $d_7$-differential on the class $(96, 0)$ in $Q_8$-$\HFPSS(E_2)$. In other words, this $d_7$-differential is responsible for breaking the $96$-periodicity of $EO_2(Q_8)$, making the theory exactly $192$-periodic. In \cite{DKLLW24}, this $d_7$-differential is one of the most difficult differentials to establish. By contrast, \cref{thm:predicteddiff} directly yields this differential.  
\end{example}

Letting $\ell = h$ in \cref{thm:predicteddiff} shows that we have the following differentials:
\begin{enumerate}
\item When $(h, G) = (2^{n-1}m, \Cn)$, 
\[d_{2^{h+1}-1} \left(u_{2^{h+n-2}\lambda_{n-1}}\right)=\tr_{C_2}^{C_{2^{n}}}\left(\bar{v}_{h} a_{(2^{h+1}-1)\sigma_2}u_{(2^{h+n-1}-2^\ell )\sigma_2}\right)u_{W_{h}},\]
where $W_{h} = 2^h \sigma_{2^{n}}+2^h \lambda_1+2^{h+1}\lambda_2 + \cdots + 2^{h+n-3}\lambda_{n-2}.$
\item When $(h, G) = (4m+2, Q_8)$, 
\[d_{2^{h+1}-1} \left(u_{2^h\mathbb{H}}\right)=\tr_{C_2}^{Q_8}\left(\bar{v}_{h} a_{(2^{h+1}-1)\sigma_2}u_{3 \cdot 2^{h} \sigma_2}\right)u_{2^h \sigma_i} u_{2^h \sigma_j}u_{2^h \sigma_k}.\]
\end{enumerate}
These differentials are essential in all known computations, when $(h,G) = (2, C_4)$, $(4, C_4)$, and $(2, Q_8)$ \cite{HHR17, HSWX23, DKLLW24}. By \cref{lemma:pctoperiodicity}, $(2^{h+n-1}-2^{h+n-2}\lambda_{n-1})$ and $(2^{h+2}-2^h\mathbb{H})$ are $RO(G)$-periodicities of $E_h$ if and only if the orientation classes $u_{2^{h+n-2}\lambda_{n-1}}$ and $u_{2^h\mathbb{H}}$ are permanent cycles in the $\Cn$- and $Q_8$-homotopy fixed point spectral sequences of $E_h$, respectively. The following proposition establishes a close relationship between the essentiality of the above differentials and the sharpness of the periodicities of $EO_h(G)$.

\begin{prop}\label[prop]{lemma:cyclictight}
\,

\begin{enumerate}
 \item At height $h=2^{n-1} m$,  the periodicity of $EO_h(\Cn)$ in \cref{thm:cyclicperiodicity} is sharp if and only if  $(2^{h+n-1}-2^{h+n-2}\lambda_{n-1})$ is not a $RO(\Cn)$-periodicity of $E_h$. 
 \item At height $h=4m+2$, the periodicity of $EO_h(Q_8)$ in \cref{thm:Q8periodicity} is sharp if and only if  $(2^{h+2}-2^{h}\mathbb{H})$ is not a $RO(Q_8)$-periodicity of $E_h$. 
 \end{enumerate}
\end{prop}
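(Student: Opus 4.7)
The strategy is to reduce each statement to an elementary verification in the abelian group $RO(G)$. The key idea is to exhibit, in each case, an explicit integer linear combination of the generators of $\La'_{h,G}$ (given by \cref{theorem:ROGperiodicity}) that expresses the difference between the putative new $RO(G)$-periodicity and the non-sharp integer periodicity. Such a relation shows that the two elements represent the same coset in $RO(G)/\La'_{h,G}$, and since $\La'_{h,G}\subset\La_{h,G}$ with $\La_{h,G}$ closed under subtraction, one lies in $\La_{h,G}$ if and only if the other does. Combined with \cref{lemma:pctoperiodicity}, which identifies an integer $P$ being a periodicity of $EO_h(G)$ with $P\cdot 1 \in \La_{h,G}$, the equivalence follows.

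For part $(1)$, writing $V_i := 2^{2^{n-i}m+n-i+1}-2^{2^{n-i}m+n-i}\lambda_{n-i}$ for the periodicities in \cref{theorem:ROGperiodicity}(1b), the plan is to verify the identity
\[
2^{h+n} \;=\; \bigl(2^{h+n-1}-2^{h+n-2}\lambda_{n-1}\bigr) \;+\; 2^h\rho_{2^n} \;+\; \sum_{i=2}^{n} 2^{h-2^{n-i}m-1}\,V_i
\]
in $RO(\Cn)$, where the sum is understood as empty when $n=1$ (so the identity reduces to $2^{h+1}=(2^h-2^h\sigma)+2^h\rho_2$ via $\lambda_0=2\sigma$). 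The verification is a direct coefficient-by-coefficient check: cancellation on each $\lambda_j$ for $j\ge 1$ comes from matching $2^h\rho_{2^n}$ against the single term $V_{n-j}$, cancellation on $\sigma$ comes from $V_n$, and the constant term collapses via the geometric sum $\sum_{i=2}^{n} 2^{h+n-i} = 2^{h+n-1}-2^h$. The coefficients $2^{h-2^{n-i}m-1}$ are nonnegative integers because $h=2^{n-1}m \ge 2^{n-i}m+1$ for $i\ge 2$. This establishes $(2^{h+n-1}-2^{h+n-2}\lambda_{n-1})\equiv 2^{h+n}\pmod{\La'_{h,\Cn}}$, so these two elements lie in $\La_{h,\Cn}$ together. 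Since sharpness of the $2^{h+n+1}$-periodicity in \cref{thm:cyclicperiodicity} is equivalent to $2^{h+n}\notin\La_{h,\Cn}$, part $(1)$ follows.

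For part $(2)$, the analogous identity to verify is
\[
2^{h+3} \;=\; \bigl(2^{h+2}-2^h\mathbb{H}\bigr) \;+\; 2^h\rho_{Q_8} \;+\; 2^{2\ell}\bigl(V_{ijk}+V_{jki}+V_{kij}\bigr),
\]
where $V_{ijk},V_{jki},V_{kij}$ denote the three periodicities in \cref{theorem:ROGperiodicity}(2b). Using $2^{2\ell}\cdot 2^{2\ell+2}=2^{h}$, each $\sigma_*$-coefficient vanishes by the symmetry of the three $V$'s, the $\mathbb{H}$-coefficient cancels between $\rho_{Q_8}$ and $(2^{h+2}-2^h\mathbb{H})$, and the constant term equals $2^{h+2}+4\cdot 2^h = 2^{h+3}$. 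The same reduction then yields the conclusion, since sharpness of the $2^{h+4}$-periodicity for $EO_h(Q_8)$ is equivalent to $2^{h+3}\notin\La_{h,Q_8}$.

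The main step, and essentially the only one requiring any thought, is writing down the correct integer combinations; once they are in hand, the rest is pure lattice arithmetic together with \cref{lemma:pctoperiodicity}. I do not expect a serious obstacle, apart from the minor bookkeeping needed for the boundary case $n=1$ where $\lambda_{n-1}=\lambda_0=2\sigma$.
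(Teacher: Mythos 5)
Your proposal is correct and uses exactly the paper's argument: you exhibit the same linear combination of the generators of $\La'_{h,G}$ (after re-indexing your $\sum_{i=2}^{n}$ to the paper's $\sigma$-term plus $\sum_{s=1}^{n-2}$), and deduce the equivalence from it. The only cosmetic difference is that you phrase the conclusion via the characterization ``sharp $\iff 2^{h+n}\notin\La_{h,G}$,'' whereas the paper argues the two implications directly; both rest on the same lattice identity.
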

\begin{proof}
For (1), the statement is equivalent to that the periodicity of $EO_h(C_{2^n})$ is not sharp if and only if $( 2^{h+n-1} - 2^{h+n-2}\lambda_{n-1} )$ is a $RO(\Cn)$-periodicity for $E_h$. If this class is indeed a periodicity, then consider the following linear combination of $RO(C_{2^n})$-periodicities (see \cref{theorem:ROGperiodicity}):
\[
\begin{aligned}
&\hspace{0.2in} 2^h\Big(1+\sigma_{2^n}+\sum_{i=1}^{\,n-1}2^{\,i-1}\lambda_i\Big)
+2^{\,h-m-1}\big(2^{m+1}-2^{m+1}\sigma_{2^n}\big)\\
&\; + \left( 2^{\,h+n-1}-2^{\,h+n-2}\lambda_{n-1} \right)
+\sum_{s=1}^{\,n-2} 2^{\,h-2^{s}m-1}\big(2^{2^{s}m+s+1}-2^{2^{s}m+s}\lambda_s\big)\\
&\;=\;\sum_{j=1}^{\,n-1}2^{\,h+j}+2^{\,h+1}\\
&\;=\;2^{\,h+n}.
\end{aligned}
\]
This would show that $EO_h(\Cn)$ is $2^{h+n}$-periodic, which is smaller than the $2^{h+n+1}$-periodicity proven in \cref{thm:cyclicperiodicity}.

Conversely, if the periodicity in \cref{thm:cyclicperiodicity} is not sharp, i.e., if  $EO_h(C_{2^n})$ is $2^{h+n}$-periodic. Then by considering the above linear combination again, we deduce that $E_h$ is $(2^{h+n-1} - 2^{h+n-2}\lambda_{n-1})$-periodic.

To prove (2), note that if $(2^{h+2}-2^h \mathbb{H})$ is a periodicity of $E_h$,  then the linear combination
\[
\begin{aligned}
&\hspace{0.2in} 2^h(1+\sigma_i+\sigma_j+\sigma_k+\mathbb{H})
   + \left(2^{h+2} - 2^{h}\mathbb{H}\right) \\[4pt]
&+\,2^{2m}
      \big(2^{2m+2}+2^{2m+2}\sigma_i
           -2^{2m+2}\sigma_j
           -2^{2m+2}\sigma_k\big) \\[6pt]
&+\,2^{2m}
      \big(2^{2m+2}+2^{2m+2}\sigma_j
           -2^{2m+2}\sigma_k
           -2^{2m+2}\sigma_i\big)\\[6pt]
           &+\,2^{2m}
      \big(2^{2m+2}+2^{2m+2}\sigma_k
           -2^{2m+2}\sigma_i
           -2^{2m+2}\sigma_j\big)\\[6pt]
&= 2^h + 2^{h+2} + 3\cdot2^{h} \\[6pt]
   &= 2^{h+3}
\end{aligned}
\]
shows that $EO_h(Q_8)$ is $2^{h+3}$-periodic, which is smaller than the $2^{h+4}$-periodicity proven in \cref{thm:Q8periodicity}.  Conversely, if $EO_h(Q_8)$ is $2^{h+3}$-periodic, then the above linear combination also implies that $E_h$ is $(2^{h+2}-2^h\mathbb{H})$-periodic.
\end{proof}

\section{Computational applications} \label{sec:E2C4}

In this section, we demonstrate a computational application of our periodicity theorem, using \cref{thm:A} as a key input to establish differentials and resolve extension problems in the equivariant slice spectral sequence. We compute the $C_4$-slice spectral sequence of $D^{-1}\BPCfour \langle 1 \rangle$, where $D = N_{C_2}^{C_4}(\bt_1)$.  Based on the work in \cite{BHSZ21}, this periodic Hill--Hopkins--Ravenel theory serves as a model for the $C_4$-equivariant Lubin--Tate theory $E_2$. 

A highlight of our computation is that, unlike the original computation in \cite{HHR17}, which uses many explicit formulas involving restriction, transfer, and norm maps, our computation relies on very few such formulas. In fact, except for the $d_3$-differentials, all other differentials are obtained by drawing on the “qualitative structures” present in the equivariant slice spectral sequence, such as periodicity, transchromatic phenomena, and strong vanishing lines.

For our application, we will specialize the statements below to $D^{-1}\BPCfour \langle 1 \rangle$. The structural theorems themselves apply to all localizations of $\BPCn \langle m \rangle$, not only to $D^{-1}\BPCfour \langle 1 \rangle$. In particular, they also give a more structural and streamlined approach to computations at higher heights (such as the height-4 theory computed in \cite{HSWX23}). Here we work out the case $D^{-1}\BPCfour \langle 1 \rangle$ in detail to illustrate how these structural theorems are used.

\begin{enumerate}

\item \textbf{32-Periodicity in $\pi_*$}. By the Periodicity Theorem (\cref{thm:A}), $\pi_n(D^{-1}\BPCfourone) = \pi_{n-32}(D^{-1}\BPCfourone)$. The presence of this 32-periodicity in $\pi_*$ allows us to deduce higher differentials from known lower differentials, as well as establish extensions on the $\mathcal{E}_\infty$-page, by leveraging the differing distributions of classes and patterns of differentials in the positive and negative cones.

\item \textbf{Transchromatic isomorphism}.  This is established in \cite{MSZ24} and \cite{LiuShiYan2025}. These results allow us to deduce differentials in a certain region of the $C_4$-slice spectral sequence of $D^{-1}\BPCfour \langle 1 \rangle$ ($C_4$-equivariant $E_2$) directly from the $C_2$-slice spectral sequence of $\bv_1^{-1} \BPR \langle 1 \rangle$ ($C_2$-equivariant $E_1$).

\item \textbf{Strong vanishing line}.  The strong horizontal vanishing lines for the equivariant slice spectral sequences of Lubin--Tate theories and Hill--Hopkins--Ravenel theories are proven in \cite{DLS2022}. Applied to the $C_4$-slice spectral sequence of $D^{-1}\BPCfour \langle 1 \rangle$, these results show that there is a strong horizontal vanishing line at filtration 13: all classes with filtration at least 13 must die, and the longest differential in the spectral sequence has length at most 13. 

\item \textbf{Tate periodicity and duality}.  In \cite{LiuShiYan2025}, periodicity elements are established in the Tate spectral sequences of Lubin--Tate theories and Hill--Hopkins--Ravenel theories. These are elements $x$ that are invertible on the $\mathcal{E}_2$-page and have the property that if $d_r(a) = b$ is an essential $d_r$-differential, then $d_r(x \cdot a) = x \cdot b$ and $d_r(x^{-1} \cdot a) = x^{-1} \cdot b$ are also essential $d_r$-differentials. 

The Tate periodicities associated with $D^{-1}\BPCfour \langle 1 \rangle$ are $\dtone^4 u_{4\sigma} a_{4\lambda}$ at $(8, 8)$ and $\dtone^8 u_{8\lambda} u_{8\sigma}$ at $(32, 0)$. These periodicity elements will be used to propagate nontrivial $d_r$-differentials in the positive cone, as well as to propagate the differentials in the positive cone through the generalized Tate spectral sequences to the negative cone. Given these Tate periodicity elements, it suffices to determine the differentials in the positive cone.

\item \textbf{Restriction, transfer, and norm maps}.  Traditionally, the restriction, transfer, and norm maps between the $C_4$-slice spectral sequence and the $C_2$-slice spectral sequence of $D^{-1}\BPCfour \langle 1 \rangle$ have been used extensively in \cite{HHR17} to compute differentials. While powerful, this approach requires explicit computations using equivariant formulas associated with these maps and can become computationally intensive as the lengths of differentials increase. It is manageable for short differentials but becomes increasingly difficult for longer ones. In our computation, we use this method only to establish the $d_3$-differentials, which are the shortest differentials in the spectral sequence and whose computations using this method are very straightforward.
\end{enumerate}

\subsection{The \texorpdfstring{$\mathcal{E}_2$}{}-page}

The slice associated graded of $D^{-1}\BPCfourone$ is $D^{-1}H\Z[\tone, \gtone]$, and the $RO(G)$-graded $\mathcal{E}_2$-page is $D^{-1}{H\Z}_\star[\tone, \gtone]$. The distributions of classes in the integer-graded slice spectral sequences are as follows: the $C_2$-slice spectral sequence is concentrated in the first and third quadrants, bounded by the lines of slope 0 and 1 through the origin. The $C_4$-slice spectral sequence is concentrated in the first and third quadrants, bounded between the lines of slopes 0 and 3 through the origin.

\subsection{The \texorpdfstring{$C_2$}{}-differentials}
Using Hu--Kriz's computation of the $C_2$-slice spectral sequence of $\BPR$ \cite{HuKriz, HHR16}, we obtain complete knowledge of the differentials in the positive cone of the $C_2$-slice spectral sequence of $D^{-1}\BPCfour \langle 1 \rangle$ by using the composition map of spectral sequences
\[C_2\text{-}\SliceSS(\BPR) \longrightarrow C_2\text{-}\SliceSS(\BPCfour) \longrightarrow C_2\text{-}\SliceSS(D^{-1}\BPCfour\langle 1 \rangle).\]
There are only $d_3$- and $d_7$-differentials. All the $d_3$-differentials in the positive cone are generated by
\[d_3(u_{2\sigma_2}) = \bv_1 a_{\sigma_2}^3 = (\tone + \gtone) a_{\sigma_2}^3,\]
and all the $d_7$-differentials are generated by
\[d_7(u_{4\sigma_2}) = \bv_2 a_{\sigma_2}^7 = \tone^3 a_{\sigma_2}^7.\]
Using Tate periodicity and duality ((4) above), we then obtain all the differentials in the negative cone from those in the positive cone.

\subsection{\texorpdfstring{$d_3$}{}-differentials} \label{subsec:d3diff}

The $d_3$-differentials in the $C_4$-$\SliceSS$ can all be directly determined via the restriction map to the $C_2$-$\SliceSS$, whose differentials have been completely computed.  In particular, there is a $d_3$-differential $d_3(x) = y$ in the $C_4$-$\SliceSS$ if and only if $d_3(\res(x)) = \res(y)$ in the $C_2$-$\SliceSS$. A more detailed explanation can be found in \cite[Section~3.2]{HSWX23}. 

\subsection{Transchromatic differentials}

By applying the transchromatic isomorphism ((2) above), the $C_2$-slice spectral sequence of $\bv_1^{-1}\BPR \langle 1 \rangle$ and the $C_4$-slice spectral sequence of $N(\bt_1)^{-1}\BPCfour \langle 1 \rangle$ are shearing isomorphic. Therefore, the $C_2$-differentials in $C_2$-$\SliceSS(\bv_1^{-1}\BPR)$ (which are all $d_3$-differentials) completely determine the differentials in the $C_4$-slice spectral sequence of $N(\bt_1)^{-1}\BPCfour \langle 1 \rangle$, and hence of $D^{-1}\BPCfour \langle 1 \rangle$, in the region between the lines of slope 1 and slope 3 (all $d_5$-differentials).

It remains to compute the rest of the differentials, which all originate in the region between the lines of slope 0 and slope 1. By using Tate periodicity and duality ((4) above), it suffices to compute all the differentials in the positive cone.

\subsection{Repeating \texorpdfstring{$bo$}{}-patterns} \label{subsec:boPatterns}

Before discussing the higher differentials, we will first take off repeating patterns that do not contribute to higher differentials.  More specifically, the $d_3$-differentials obtained in \cref{subsec:d3diff} produce a series of 8-periodic patterns in low filtrations ($-2$, $-1$, $0$, $1$, $2$).  To describe these patterns, we will use the notation $\langle a, b \rangle$, $a \geq b$ to denote the (regular) slice cell indexed by $\{t_1^a \gamma t_1^a\}$ if $a = b$, and the (induced) slice cell indexed by $\{t_1^a \gamma t_1^b, t_1^b \gamma t_1^a\}$ if $a \neq b$. 

There are two types of 8-periodic patterns.  The first type is shown in \cref{fig:E2C4RepeatPatternFirst} and has classes from both regular slice cells and induced slice cells.

\begin{figure}
\begin{center}
\makebox[\textwidth]{\includegraphics[trim={0cm 21.5cm 0cm 0cm}, clip, scale = 0.6, page = 1]{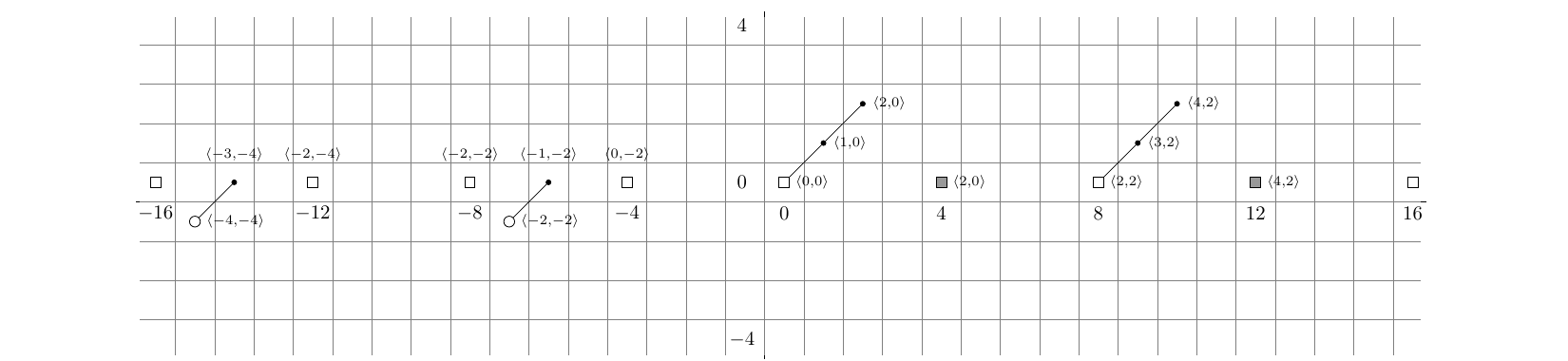}}
\caption{First type of 8-periodic pattern formed after $d_3$-differentials.}
\hfill
\label{fig:E2C4RepeatPatternFirst}
\end{center}
\end{figure}

The second type is shown in \cref{fig:E2C4RepeatPatternSecond}, and has classes from only induced slice cells.  The pattern that these classes form is called the \textit{$bo$-pattern}, as it matches the pattern of copies of $\pi_* KO$. These classes all survive to the $\mathcal{E}_\infty$-page. Consequently, we can ignore them when determining higher differentials.

\begin{figure}
\begin{center}
\makebox[\textwidth]{\includegraphics[trim={0cm 21.5cm 0cm 0cm}, clip, scale = 0.6, page = 2]{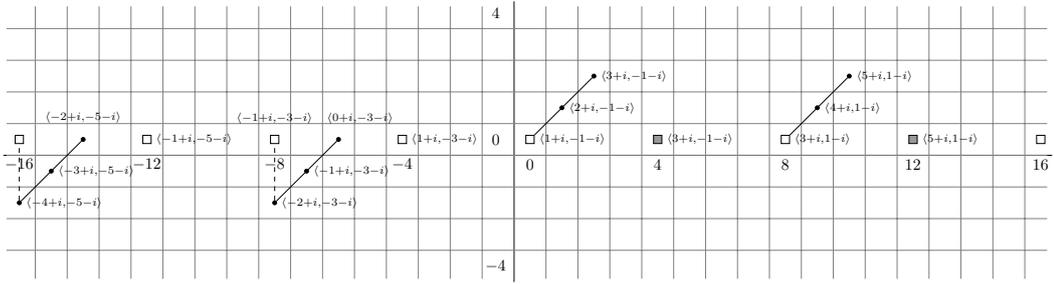}}
\caption{Second type of 8-periodic pattern ($bo$-pattern) formed after $d_3$-differentials ($i \geq 0$).}
\hfill
\label{fig:E2C4RepeatPatternSecond}
\end{center}
\end{figure}

The survival of these classes can be explained using the Tate spectral sequence. A general method for this is described in \cite{LiuShiYan2025} (see also \cite[Method~2.7, Proposition~4.12]{DKLLW24} for discussions of $bo$-patterns in the homotopy fixed point spectral sequence). In this case, since we have determined all the $d_3$-differentials in the slice spectral sequence, we also determine all the $d_3$-differentials in the Tate spectral sequence. Some $d_3$-differentials in the Tate spectral sequence have sources in filtration $<0$ and targets in filtration $\geq 0$ (see \cref{fig:E2C4TateSSd3}). These differentials ``cross'' the horizontal line $s = 0$ and do not occur in the slice spectral sequence.  Consider the maps
\[\SliceSS(X) \longrightarrow \TateSS(X),\]
which induce an isomorphism in the positive cone, and 
\[\Sigma^{-1}\TateSS(X) \longrightarrow \SliceSS(X),\]
which induce an isomorphism in the negative cone.  Using these maps, we deduce that the targets of the crossing differentials contribute the classes in the $bo$-pattern in the positive cone (via the first map), and the sources of the crossing differentials contribute to the classes in the $bo$-pattern in the negative cone (via the second map).

\begin{figure}
\begin{center}

\makebox[\textwidth]{\includegraphics[trim={0cm 21.5cm 0cm 0cm}, clip, scale = 0.6, page = 3]{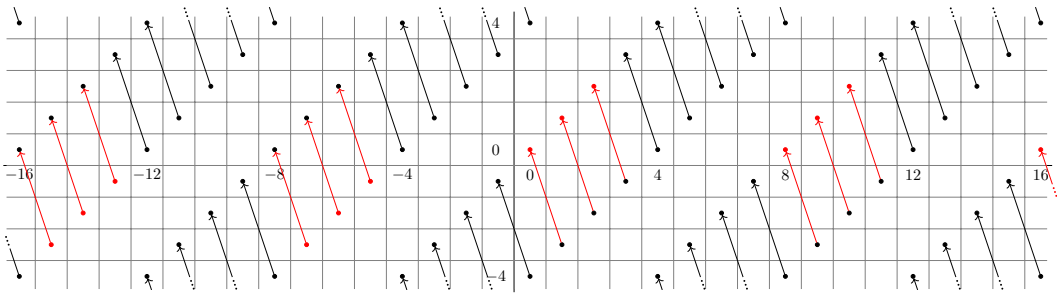}}
\caption{The $d_3$-differentials in the Tate spectral sequence and classes from induced slice cells contributing to the $bo$-pattern.}
\hfill
\label{fig:E2C4TateSSd3}
\end{center}
\end{figure}

In the argument that follows, we will focus on the bidegree of the elements and will only occasionally specify an explicit formula for a class in that bidegree when necessary. This approach emphasizes that explicit computations with formulas are not required for determining higher differentials; it suffices to just know the bidegree and to use the computational structures in the slice spectral sequence described above.


\subsection{Higher differentials}\hfill
\vspace{0.1in}

\noindent \textbf{$d_5$-differentials.}  The $d_5$-differentials can be determined by comparing the classes in the 32-stem with those in the 0-stem. All the classes in the 0-stem lie in bidegree $(0, 0)$ and are torsion-free. Since every class in the 32-stem with positive filtration is torsion, they must all be killed by differentials.

In particular, the classes $\dtone^{16} u_{16\sigma}a_{16\lambda}$ and $2\dtone^{16} u_{16\sigma}a_{16\lambda}$ at $(32, 32)$ must both be killed. The only way for this to happen is for $2\dtone^{16} u_{16\sigma}a_{16\lambda}$ to be hit by a $d_5$-differential from $(33, 27)$, and for $\dtone^{16} u_{16\sigma}a_{16\lambda}$ to be hit by a $d_{13}$-differential from $(33, 19)$. The $d_5$-differential is
\[d_5(\dtone^{15}u_{2\lambda}u_{14\sigma}a_{13\lambda}a_\sigma) = 2\dtone^{16} u_{16\sigma}a_{16\lambda} \hspace{0.2in} (d_5(33, 27) = 2(32, 32)).\]
Applying the $(8, 8)$-Tate periodicity given by the element $\dtone^4 u_{4\sigma} a_{4\lambda}$, we obtain the following $d_5$-differential:
\[d_5(\dtone^{3}u_{2\lambda}u_{2\sigma}a_{\lambda}a_\sigma) = 2\dtone^{4} u_{4\sigma}a_{4\lambda} \hspace{0.2in} (d_5(9, 3) = 2(8, 8)).\]
We will now apply the Leibniz rule to this $d_5$-differential with respect to the following classes:
\begin{enumerate}
\item The class $\dtone a_\lambda a_\sigma$ at $(1, 3)$ is a permanent cycle. Applying the Leibniz rule using the differential $d_5(9, 3) = 2(8, 8)$ and this class produces the following $d_5$-differential:
\[ d_5(\dtone^2 u_{2\lambda} u_{2\sigma}) = \dtone^3 u_\lambda u_{2\sigma} a_{2\lambda} a_\sigma \hspace{0.2in} (d_5(8, 0) = (7, 5)).\]

\item The class $\dtone^2 u_{2\sigma} a_{2\lambda}$ at $(4, 4)$ supports the $d_5$-differential 
\[d_5(\dtone^2 u_{2\sigma} a_{2\lambda}) = \dtone^3 a_{3\lambda} a_{3\sigma} \hspace{0.2in} (d_5(4,4) = (3, 9)),\] 
which is obtained from the transchromatic isomorphism. The target of this differential kills all classes below the line of slope 1, due to the gold relation $u_{\lambda} a_{3\sigma} = 2 u_{2\sigma} a_{\lambda} a_\sigma = 0$.

\item The class $\dtone u_{\lambda} a_\sigma$ at $(3, 1)$ cannot support a $d_5$-differential, since doing so would contradict the differential $d_5(4, 4) = (3, 9)$ above. Therefore, this class is a surviving permanent cycle. Applying the Leibniz rule using the differential $d_5(8, 0) = (7, 5)$ and this class produces the following $d_5$-differential: 
\[d_5(\dtone^3 u_{3\lambda}u_{2\sigma} a_\sigma) = 2 \dtone^4 u_{\lambda}u_{4\sigma} a_{3\lambda} \hspace{0.2in} (d_5(11,1) = 2(10, 6)).\]

\item The class $\dtone^4 u_{4\lambda} u_{4\sigma}$ at $(16, 0)$ is the square of the class $\dtone^2 u_{2\lambda} u_{2\sigma}$ at $(8, 0)$, and is therefore a $d_5$-cycle. 
\end{enumerate}

Take the following three $d_5$-differentials that we have obtained: 
\begin{alignat*}{2}
d_5(\dtone^2 u_{2\lambda} u_{2\sigma}) &= \dtone^3 u_\lambda u_{2\sigma} a_{2\lambda} a_\sigma \hspace{0.2in} &&(d_5(8, 0) = (7, 5)) \\
d_5(\dtone^{3}u_{2\lambda}u_{2\sigma}a_{\lambda}a_\sigma) &= 2\dtone^{4} u_{4\sigma}a_{4\lambda} \hspace{0.2in}&& (d_5(9, 3) = 2(8, 8)) \\
d_5(\dtone^3 u_{3\lambda}u_{2\sigma} a_\sigma) &= 2 \dtone^4 u_{\lambda}u_{4\sigma} a_{3\lambda} \hspace{0.2in}&& (d_5(11,1) = 2(10, 6))
\end{alignat*}
By repeatedly applying the Leibniz rule to these differentials, using the classes $\dtone^2 u_{2\sigma} a_{2\lambda}$ at $(4, 4)$ and $\dtone^4 u_{4\lambda} u_{4\sigma}$ at $(16, 0)$, we obtain all the $d_5$-differentials in the positive cone.  Applying Tate periodicity and duality then produces all the $d_5$-differentials in the negative cone.  See \cref{fig:E2C4d5}. 

\begin{figure}
\begin{center}
\makebox[\textwidth]{ \includegraphics[trim={0cm 0cm 0cm 0cm}, clip, scale = 0.45]{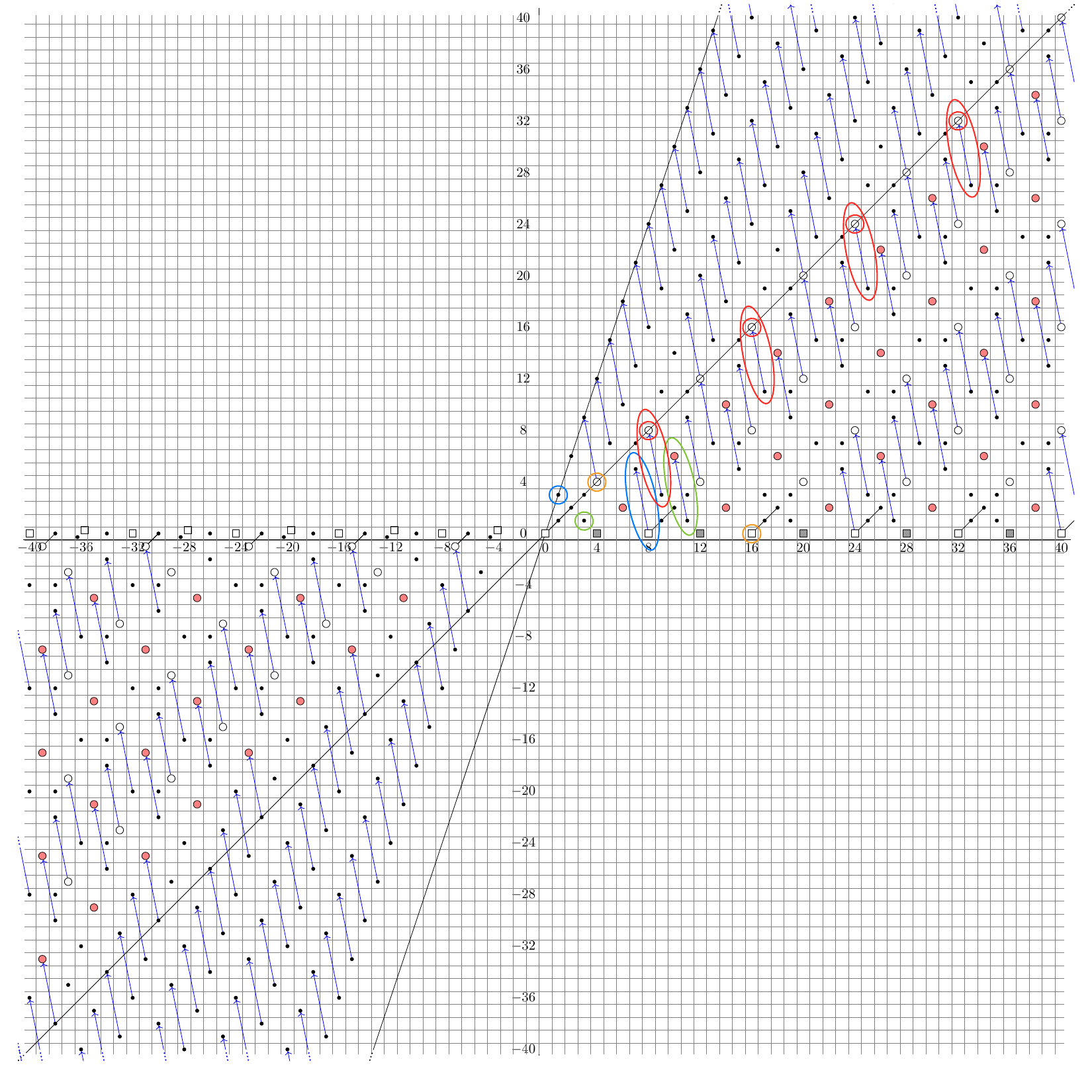}}
\caption{$d_5$-differentials. The $d_5$-differentials between the lines of slope 1 and 3 are obtained using the transchromatic isomorphism.}
\hfill
\label{fig:E2C4d5}
\end{center}
\end{figure}

\vspace{0.1in}

\noindent \textbf{The rest of the differentials.}  To determine the rest of the differentials, we make further use of the 32-periodicity in $\pi_*$. We begin by examining the classes in stems $29$ through $32$. Since $\pi_{-3} = \pi_{-2} = \pi_{-1} = 0$ and $\pi_0$ is torsion-free, all classes in stems $29$, $30$, and $31$, as well as the classes in positive filtration in the 32-stem, must die.

\begin{enumerate}
\item The class at $(30, 2)$ must die. The only possibility is that it supports a differential of length 13, hitting the class at $(29, 15)$.

\item The class at $(30, 18)$ must also die, and the only possibility is that it is the target of a $d_{11}$-differential supported by the class at $(31, 7)$.

\item The classes at $(31, 15)$, $(31, 23)$, and $(31, 31)$ must all die. These classes cannot support differentials, so the only possibility is that they are the targets of $d_7$-differentials originating from $(32, 8)$, $(32, 16)$, and $(32, 24)$, respectively. This because by the strong vanishing line result, the maximal differential length in the spectral sequence is 13.

\item The class at $(32, 32)$ must die.  The only possibility is for it to be killed by a $d_{13}$-differential originating from $(33, 19)$.
\end{enumerate}
These differentials can then be propagated using the Tate periodicity classes $\dtone^4 u_{4\sigma} a_{4\lambda}$ at $(8, 8)$ and $\dtone^8 u_{8\lambda} u_{8\sigma}$ at $(32, 0)$.  See \cref{fig:E2C4HigherStep1}.

\begin{figure}
\begin{center}
\makebox[\textwidth]{\includegraphics[trim={0cm 0cm 0cm 0cm}, clip, scale = 0.45]{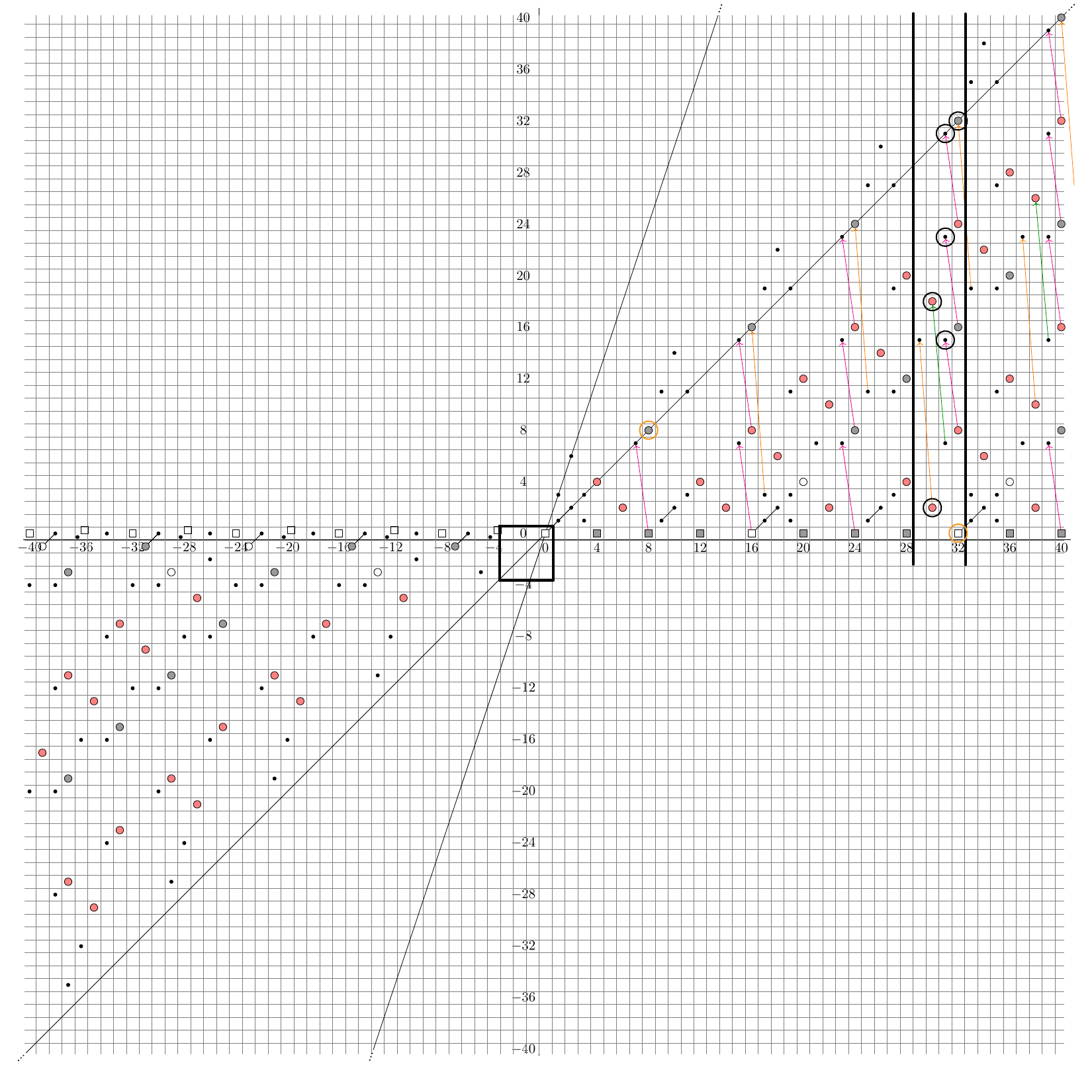}}
\caption{The higher differentials in the positive cone that are obtained by considering the classes in stems 29, 30, 31, and 32.}
\hfill
\label{fig:E2C4HigherStep1}
\end{center}
\end{figure}

Next, we will consider the classes in stems $25$, $26$, $27$, and $28$.
\begin{enumerate}
\item There are three classes in the 25-stem, located at $(25, 1)$, $(25, 11)$, and $(25, 27)$. The class at $(25, 11)$ supports a $d_{13}$-differential, obtained by propagating the differential $d_{13}(33, 19) = (32, 32)$ above via the $(8, 8)$-Tate periodicity. The class at $(25, 27)$ lies in filtration at least 13, which is above the horizontal vanishing line, and therefore it must die. The only possibility is that it is the target of a $d_{13}$-differential supported by the class at $(26, 14)$. The class at $(25, 1)$ neither supports nor is killed by any differential, and therefore it survives to the $\mathcal{E}_\infty$-page. Under the 32-periodicity in $\pi_*$, it corresponds to the class at $(-7, -1)$.

\item There are two classes remaining in the 26-stem, located at $(26, 2)$ and $(26, 30)$. The class at $(26, 30)$ must die because it lies above the horizontal vanishing line. By degree reasons, it must be the target of a $d_{11}$-differential supported by the class at $(27, 19)$ (it cannot be the target of a $d_3$-differential from $(29, 27)$, since all the $d_3$-differentials have already been determined). The class at $(26, 2)$ survives to the $\mathcal{E}_\infty$-page and corresponds to the class at $(-6, 0)$ under the 32-periodicity in $\pi_*$.

\item There are three remaining classes in the 27-stem, located at $(27, 3)$, $(27, 11)$, and $(27, 27)$. In stem $(-5)$, there is a single class at $(-5, -3)$, which survives. This class must correspond to the class at $(27, 3)$, which also survives by degree reasons. Therefore, the classes at $(27, 11)$ and $(27, 27)$ must both die. The only possibility is for them to be killed by $d_7$-differentials supported by the classes at $(28, 4)$ and $(28, 20)$, respectively.

\item There is one remaining class in the 28-stem, located at $(28, 12)$. By degree reasons, it must survive to the $\mathcal{E}_\infty$-page. It corresponds to the class at $(-4, 0)$ under the 32-periodicity in $\pi_*$.
\end{enumerate}
These differentials can then be propagated using the $(8, 8)$- and $(32, 0)$-Tate periodicity classes described above.  See \cref{fig:E2C4HigherStep2}.

To finish the computation of differentials, we just need to consider the remaining classes in the 36-stem: the class $\dtone^{10}u_{8\lambda}u_{10\sigma}a_{2\lambda}$ at $(36, 4)$ (4-torsion) and the class $\dtone^{14}u_{4\lambda}u_{14\sigma}a_{10\lambda}$ at $(36, 20)$ (2-torsion). There is only one class in stem 4, a 2-torsion class at $(4, 4)$, which survives to the $\mathcal{E}_\infty$-page. This implies that the generator at $(36, 4)$ must die. The only possibility is for this class to support a $d_7$-differential, hitting the class at $(35, 11)$. The class at $(36, 20)$ must also die because it lies above the horizontal vanishing line, and the only possibility is for it to be killed by a $d_{13}$-differential supported by the class at $(37, 7)$. The class $2\dtone^{10}u_{8\lambda}u_{10\sigma}a_{2\lambda}$ at $(36, 4)$ survives to the $\mathcal{E}_\infty$-page and corresponds to the class at $(4, 0)$ under the 32-periodicity in $\pi_*$.

Propagating these two differentials using the $(8, 8)$- and $(32, 0)$-Tate periodicity classes produces all the remaining differentials in the positive cone. Applying Tate periodicity and duality then produces all the differentials in the negative cone.  See \cref{fig:E2C4SSSFullAllDifferentials} and \cref{fig:E2C4SSSFullEinfty}.

\begin{figure}
\begin{center}
\makebox[\textwidth]{\includegraphics[trim={0cm 0cm 0cm 0cm}, clip, scale = 0.45]{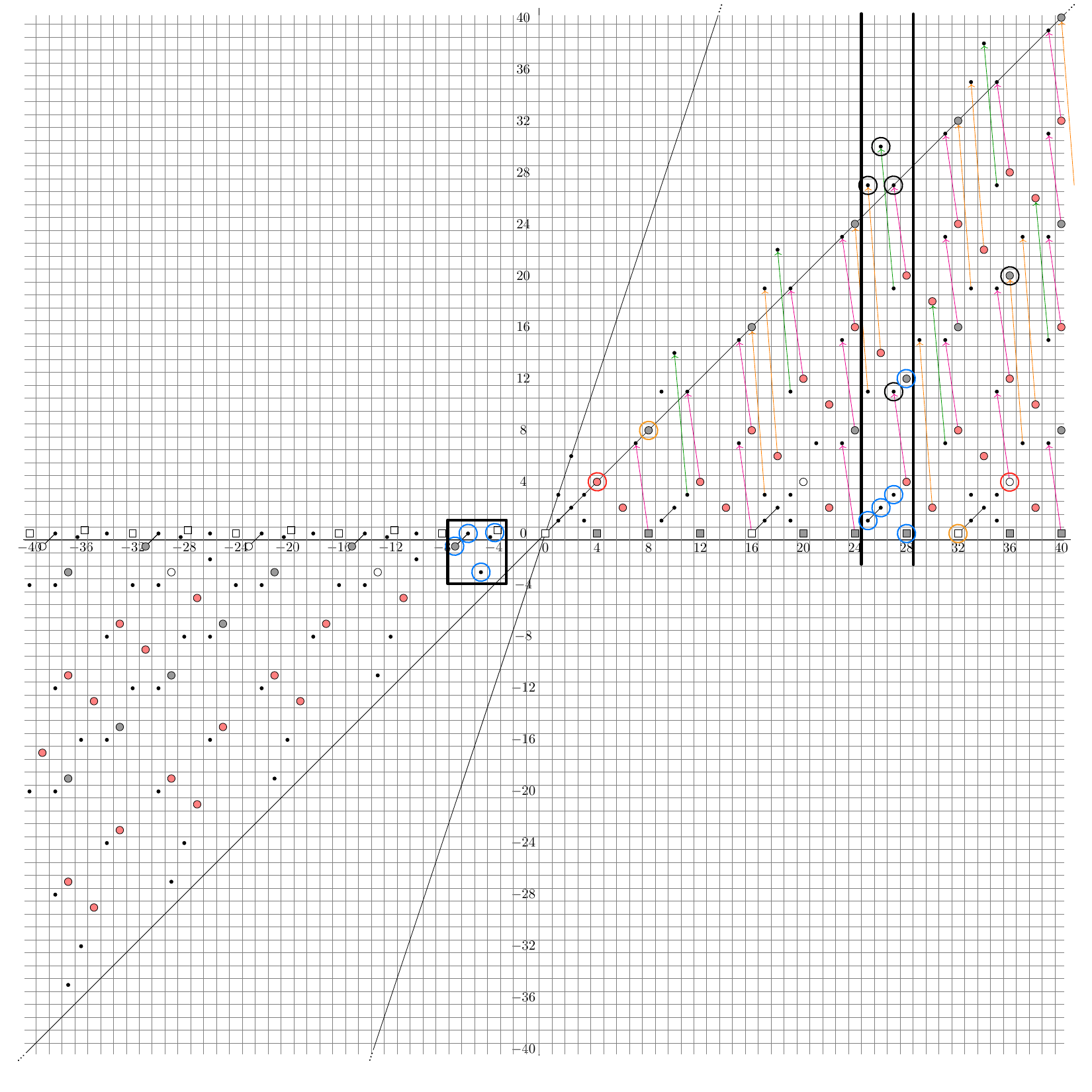}}
\caption{The rest of the higher differentials in the positive cone, obtained by considering the classes in stems 25, 26, 27, 28, and 36.}
\hfill
\label{fig:E2C4HigherStep2}
\end{center}
\end{figure}

\begin{figure}
\begin{center}
\makebox[\textwidth]{\includegraphics[trim={0cm 0cm 0cm 0cm}, clip, scale = 0.45, page = 1]{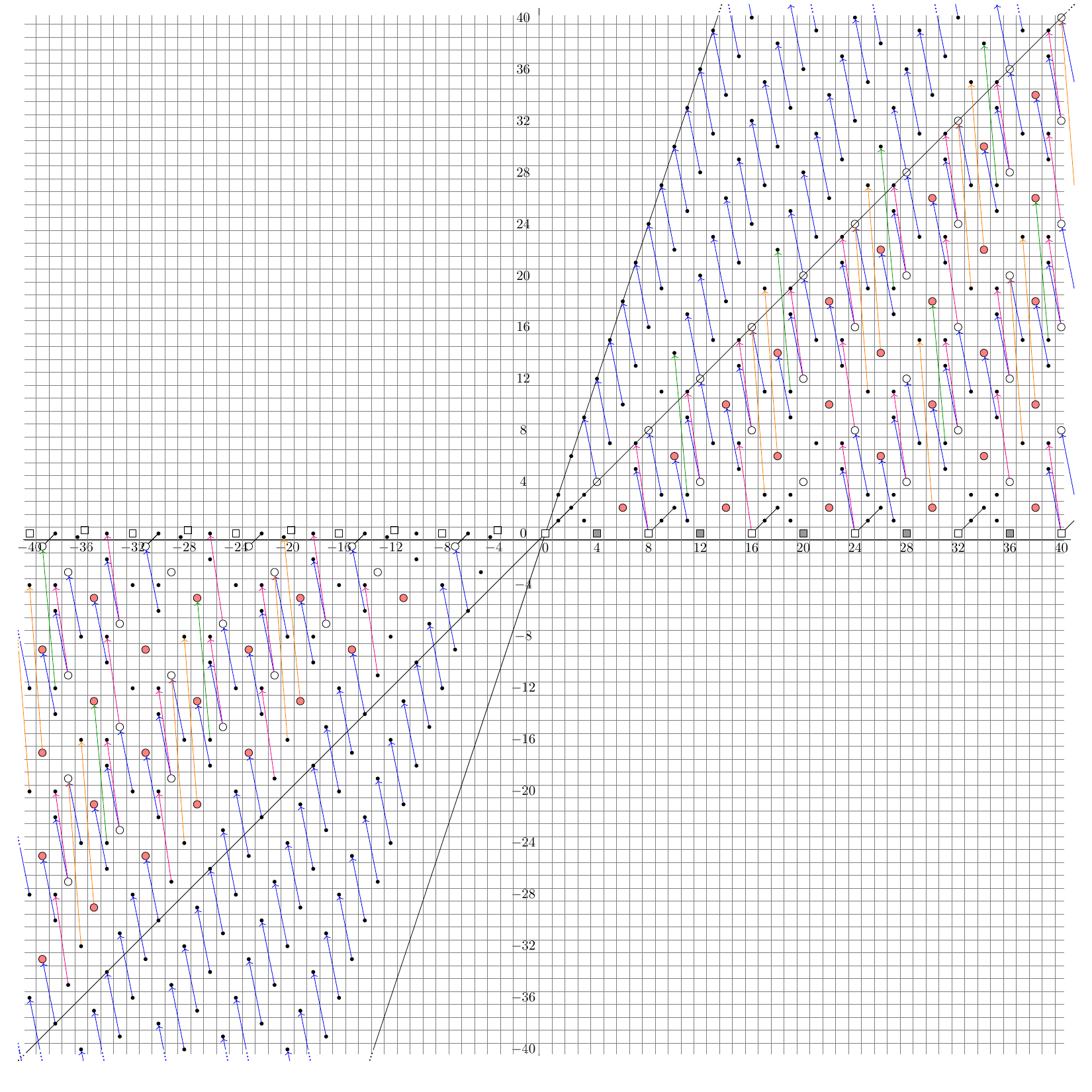}}
\caption{All the higher differentials in the slice spectral sequence of $D^{-1}\BPCfourone$.}
\hfill
\label{fig:E2C4SSSFullAllDifferentials}
\end{center}
\end{figure}

\begin{figure}
\begin{center}
\makebox[\textwidth]{\includegraphics[trim={0cm 0cm 0cm 0cm}, clip, scale = 0.45, page = 6]{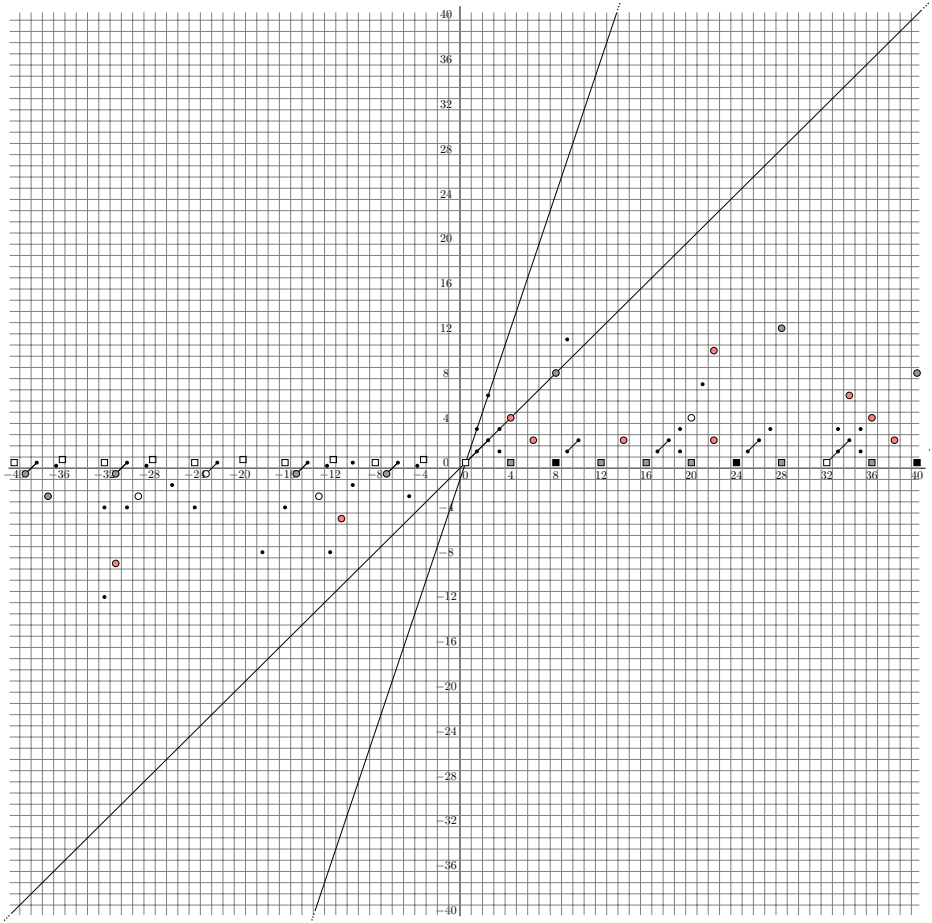}}
\caption{The $\mathcal{E}_\infty$-page of the slice spectral sequence of $D^{-1}\BPCfourone$ (without the repeating $bo$-patterns). }
\hfill
\label{fig:E2C4SSSFullEinfty}
\end{center}
\end{figure}

\subsection{2-extensions}
We will now solve all the 2-extensions on the $\mathcal{E}_\infty$-page. Our main tool will be the $32$-periodicity in $\pi_*$. For the arguments below, refer to \cref{fig:E2C4SSS2Extensions}.

\begin{figure}
\begin{center}
\makebox[\textwidth]{\includegraphics[trim={0cm 13.5cm 0cm 0cm}, clip, scale = 0.45, page = 1]{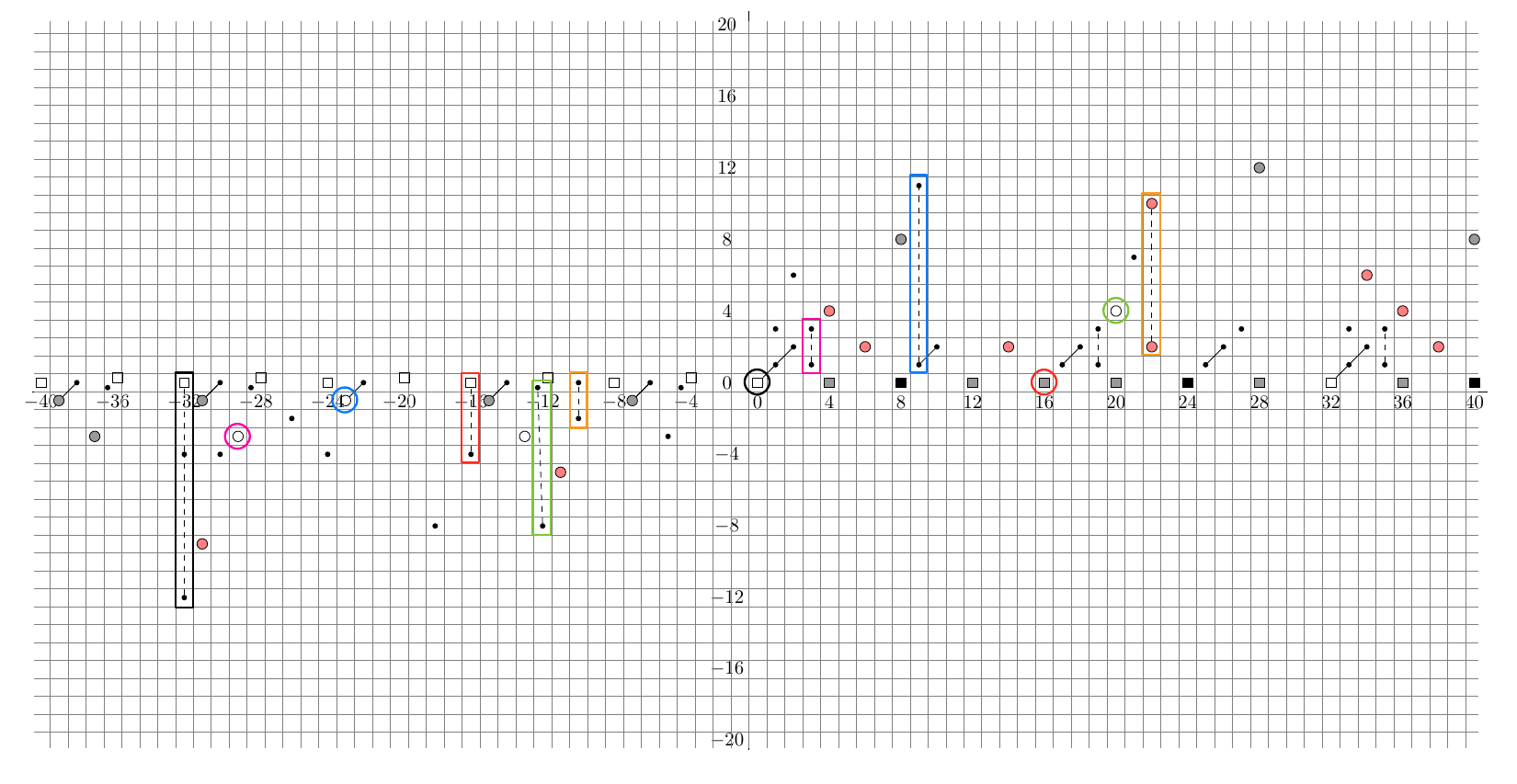}}
\caption{2-extensions on the $\mathcal{E}_\infty$-page of the slice spectral sequence of $D^{-1}\BPCfourone$. }
\hfill
\label{fig:E2C4SSS2Extensions}
\end{center}
\end{figure}

\begin{enumerate}
\item $\pi_{-32}$ and $\pi_0$: in the $(-32)$-stem, there is a $\mathbb{Z}$-class at $(-32, 0)$, a $\mathbb{Z}/2$-class at $(-32, -4)$, and a $\mathbb{Z}/2$-class at $(-32, -12)$. In the $0$-stem, there is only a $\mathbb{Z}$-class at $(0, 0)$.  Therefore, by the 32-periodicity, there are two 2-extensions, from $(-32, -12)$ to $(-32, -4)$ and from $(-32, -4)$ to $(-32, 0)$. 

\item $\pi_1$, $\pi_2$, $\pi_{-31}$, and $\pi_{-30}$: in the 1-stem, there are two $\mathbb{Z}/2$-classes.  They are the class $a = \dtone u_{\lambda} a_{\sigma}$ at $(1, 1)$ and the class $b = \dtone a_{\lambda} a_\sigma$ at $(1, 3)$.  In the 2-stem, there are two $\mathbb{Z}/2$-classes, the class $a^2 = \dtone^2 u_{2\lambda} a_{2\sigma}$ at $(2, 2)$ and the class $b^2 = \dtone^2 a_{2\lambda} a_{2\sigma}$ at $(2, 6)$.  

We will first show that there cannot be a 2-extension in the 1-stem from $a$ to $b$.  Suppose the contrary, that $b = 2a$ in $\pi_1$.  Then we must have $b^2 = 4a^2$ in $\pi_2$.  This is impossible.  Therefore the classes in $\pi_1$ and $\pi_{-31}$ are all 2-torsion.  In the 2-stem, there cannot be a 2-extension from $a^2$ to $b^2$, because if $b^2 = 2a^2$, then $2a$ must be nonzero in $\pi_1$, which is a contradiction to classes in $\pi_1$ all being 2-torsion.  

\item $\pi_{-29}$ and $\pi_3$: in the $(-29)$-stem, there is a $\mathbb{Z}/4$-class at $(-29, -3)$.  In the 3-stem, there are two $\mathbb{Z}/2$-classes, at $(3, 1)$ and $(3, 3)$.  By 32-periodicity, there is a 2-extension from $(3, 1)$ to $(3, 3)$.

\item $\pi_{-24}$ and $\pi_{8}$: in the 8-stem, the $\mathbb{Z}$-class at $(8, 0)$ cannot have a 2-extension to the $\mathbb{Z}/2$-class at $(8, 8)$.  Therefore by 32-periodicity, there is no 2-extension between from $\mathbb{Z}/2$-class at $(-24, -4)$ to the $\mathbb{Z}$-class at $(-24, 0)$ in the $(-24)$-stem.

\item $\pi_{-23}$ and $\pi_{9}$: in the $(-23)$-stem, there is a $\mathbb{Z}/4$-class.  In the $9$-stem, there are two $\mathbb{Z}/2$-classes, at $(9,1)$ and $(9, 11)$. By 32-periodicity, there is a 2-extension from $(9, 1)$ to $(9, 11)$.

\item $\pi_{-16}$ and $\pi_{16}$: in the $(-16)$-stem, there is a $\mathbb{Z}$-class at $(-16, 0)$ and a $\mathbb{Z}/2$-class at $(-16, -4)$.  In the $16$-stem, there is only a $\mathbb{Z}$-class at $(16, 0)$.  By 32-periodicity, there is a 2-extension from $(-16, -4)$ to $(-16, 0)$.  

\item $\pi_{-13}$ and $\pi_{19}$: in the $(-13)$-stem, there is a $\mathbb{Z}/4$-class at $(-13, -3)$.  In the $19$-stem, there are two $\mathbb{Z}/2$-classes, at $(19, 1)$ and $(19, 3)$.  By 32-periodicity, there is a 2-extension from $(19, 1)$ to $(19, 3)$. 

\item $\pi_{-12}$ and $\pi_{20}$: in the $(-12)$-stem, there are two $\mathbb{Z}/2$-classes, at $(-12, 0)$ and $(-12, -8)$.  In the $20$-stem, there is one $\mathbb{Z}/4$-class. By 32-periodicity, there is a 2-extension from $(-12, -8)$ to $(-12, 0)$. 

\item $\pi_{-10}$ and $\pi_{22}$: there are two $\mathbb{Z}/2$-classes in the $(-10)$-stem, $(-10, -2)$ and $(-10, 0)$, and two $\mathbb{Z}/2$-classes in the $22$-stem, $(22, 2)$ and $(22, 10)$.  We will show that there are 2-extensions from $(-10, -2)$ to $(-10, 0)$ and from $(22, 2)$ to $(22, 10)$.  First, we will show that even though the class $(22, 2)$ restricts to 0 on the $\mathcal{E}_2$-page, its restriction is nonzero in homotopy.  To do so, we make use of part of the computation of the $RO(G)$-graded spectral sequence to show that it is not divisible by $a_\sigma$.  By a computation of the $\mathcal{E}_2$-page, there are no classes at $(22+\sigma, 0)$ on the $\mathcal{E}_2$-page.  Moreover, the class at $(22+\sigma, 1)$ supports a nontrivial $d_5$-differential, and hence do not survive to the $\mathcal{E}_\infty$-page.  This shows that $(22, 2)$ is not divisible by $a_\sigma$, and therefore must have nontrivial restriction.

In the Tate spectral sequence, the class $(22, 2)$ is killed by a $d_5$-differential supported by a class at $(23, -3)$, which, after applying the 32-periodicity, corresponds to the class at $(-10, -2)$ in the slice spectral sequence.  Similarly, the class $(22, 10)$ is killed by a $d_{11}$-differential that is supported by the class $(23, -1)$, which, after applying the 32-periodicity, corresponds to the class at $(-10, 0)$ in the slice spectral sequence.  Since we have established that the class $(22, 2)$ has nontrivial restriction in homotopy, the class $(-10, -2)$ must also have nontrivial restriction in homotopy, and the only possible image is the class at $(-10, 0)$ on the $\mathcal{E}_\infty$-page in the $C_2$-spectral sequence by degree reasons.  The transfer of this class is exactly the class $(-10, 0)$ in the $C_4$-spectral sequence. Therefore, we have a $2$-extension from $(-10,-2)$ to $(-10,0)$ since $\tr\circ \res =2$.  By 32-periodicity, there is also a 2-extension from $(22, 2)$ to $(22, 10)$.

\item The classes in the $bo$-patterns described in \cref{subsec:boPatterns} admit no further 2-extensions for degree reasons.
\end{enumerate}

\bibliographystyle{alpha}
\bibliography{ref}

\end{document}